\definecolor{dark-blue}{rgb}{0.15,0.15,0.4}
\def\thm@space@setup{
\thm@preskip=4pt
\thm@postskip=4pt}
\newtheorem{thm}{Theorem}[section]
\newtheorem{prop}[thm]{Proposition}
\newtheorem{lem}[thm]{Lemma}
\newtheorem{cor}[thm]{Corollary}
\newtheorem{remark}[thm]{Remark}
\def\^{{\wedge}}
\def\*{{\star}}
\def\bar{\overline}
\def\e#1{{\rm e}^{\, #1}}
\def\ha{\frac{1}{2}}
\def\BH{{\mathbb H}}
\def\BR{{\mathbb R}}
\def\BZ{{\mathbb Z}}
\def\CN{{\mathcal N}}
\def\CO{{\mathcal O}}
\def\CQ{{\mathcal Q}}
\def\CU{{\mathcal U}}
\def\CV{{\mathcal V}}
\def\Rc{{\mathsf c}}
\def\Rh{{\mathsf h}}
\def\RJ{{\mathsf J}}
\def\Rr{{\mathsf r}}
\def\RR{{\mathsf R}}
\def\Rw{{\mathsf w}}
\def\RZ{{\mathsf Z}}
\DeclareMathOperator{\lk}{\mathsf{lk}}
\DeclareMathOperator{\slk}{\mathsf{slk}}
\DeclareMathOperator{\w}{\mathsf w}
\DeclareMathOperator{\tb}{\mathsf{tb}}
\DeclareMathOperator{\rot}{\mathsf{rot}}
\DeclareMathOperator{\sgn}{\mathrm{sgn}}
\begin{document}
\title{Remarks on Legendrian Self-Linking}
\author{Chris Beasley,\\ {\sl Department of Mathematics, Northeastern University, 
Boston MA 02115, USA}\\[2 ex]
Brendan McLellan,\\ {\sl Department of Mathematics \& Statistics, Utah State University, 
Logan UT 84322, USA}\\[2 ex]
and \\[2 ex]
Ruoran Zhang\\ {\sl Department of Mathematics, Northeastern University, 
Boston MA 02115, USA}}

\begin{abstract}
\bigskip
\baselineskip=15pt
The Thurston-Bennequin invariant provides one notion of self-linking for
any homologically-trivial Legendrian curve in a contact
three-manifold.  Here we discuss related analytic notions of self-linking for
Legendrian knots in $\BR^3$.  Our definition is based
upon a reformulation of the elementary Gauss linking integral and
is motivated by ideas from supersymmetric gauge theory.  We recover
the Thurston-Bennequin invariant as a special case.
\end{abstract}

\maketitle 

\section{Introduction}
The linking number $\lk(C_1,C_2)$ of disjoint, oriented curves ${C_1, C_2
  \subset \BR^3}$ is among the most basic invariants in knot theory, and
as such it admits many different descriptions.  We begin with
three.

Let $(x,y,z)$ be Euclidean coordinates on $\BR^3$ and consider the
global angular form \cite{BottTu:82} 
\begin{equation}\label{Angular}
\psi \,=\, \frac{1}{4\pi}\frac{x \, dy\^dz \,-\, y \, dx\^dz \,+\, z
  \, dx\^dy}{\left[x^2 \,+\, y^2 \,+\,
    z^2\right]^{3/2}}\,\in\,\Omega^2\big(\BR^3-\{0\}\big)\,.
\end{equation}  
With the given normalization, ${\psi=\varrho^*\omega}$ is the pullback
of an \mbox{$SO(3)$-invariant}, unit-volume form $\omega$ on ${S^2 \subset
  \BR^3}$ under the retraction 
\begin{equation}\label{Retrac}
\begin{aligned}
&\varrho:\BR^3-\{0\} \,\longrightarrow\,S^2\,,\\
&\varrho(x,y,z) \,=\,
\frac{\left(x,y,z\right)}{\left[x^2\,+\,y^2\,+\,z^2\right]^{1/2}}\,.
\end{aligned}
\end{equation}
Without loss, we take the embedded curves ${C_i}$ for ${i=1,2}$ to be
parametrized by smooth maps 
\begin{equation}
X_i:S^1\,\longrightarrow\,\BR^3\,,\qquad i=1,2\,,
\end{equation}
in terms of which we write the difference
\begin{equation}\label{DiffG}
\begin{aligned}
&\Gamma:\BR^3\times\BR^3\,\longrightarrow\,\BR^3\,,\\
&\Gamma(X_1,X_2)\,=\, X_2 - X_1\,.
\end{aligned}
\end{equation}
The Gauss formula for the linking number of $C_1$
and $C_2$ is then given by an integral over the torus ${T^2 = S^1
  \times S^1}$,
\begin{equation}\label{Gauss}
\lk(C_1,C_2) \,=\, \int_{T^2} \left(X_1\times
  X_2\right)^*\!\Gamma^*\psi\,.
\end{equation}
Essential here, since $C_1$ and $C_2$ are disjoint space curves, the
singularity of $\psi$ at the origin is avoided, and the
Gauss integrand is everywhere smooth and bounded on $T^2$.

Alternatively, because $\omega$ represents a generator for the cohomology
$H^2(S^2;\BZ)$, the Gauss linking integral computes the topological degree
\begin{equation}\label{Degree}
\lk(C_1,C_2) \,=\, \deg\varphi_{12} \,\in\,\BZ\,,
\end{equation}
where $\varphi_{12}$ is the composition
\begin{equation}\label{LkPhi}
\varphi_{12}:T^2\,\longrightarrow\,S^2\,,\qquad\qquad\varphi_{12} \,=\,
\varrho\circ\Gamma\circ(X_1\times X_2)\,.
\end{equation}
In its second description as the degree of $\varphi_{12}$, the linking
number is clearly an integer and invariant under smooth isotopies of
the curves $C_{1,2}$.
The overall sign of the linking number depends upon the orientations for
$T^2$ and $S^2$.  Throughout, if $(\theta_1,\theta_2)$ are angular
coordinates on $T^2$, we orient the torus by $d\theta_1\^d\theta_2$.
We similarly give ${S^2\subset\BR^3}$ the orientation induced from the standard
orientation $dx\^dy\^dz$, for which the cohomology generator ${\omega >
  0}$ is positive.

\begin{figure}[t]
$$\begin{matrix}
&\includegraphics[scale=0.50]{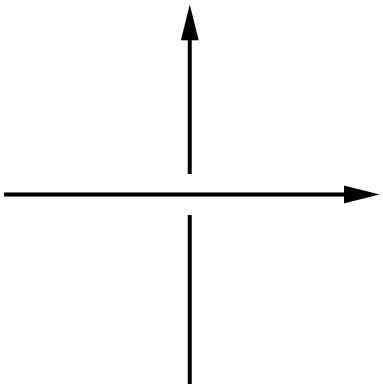}\qquad & \qquad 
&\includegraphics[scale=0.50]{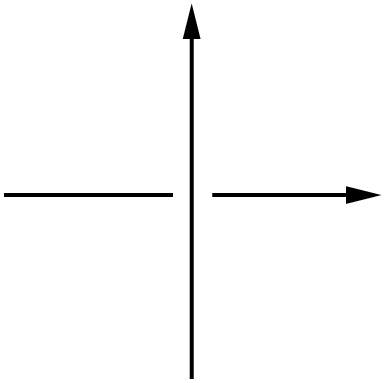}\\
&(a.)\quad \w=+1 \qquad & \qquad &(b.)\quad \w=-1
\end{matrix}$$
\caption{Writhe at right- and left-handed crossings.}\label{Writhe}
\end{figure}
Famously the linking number admits a third, diagrammatic description,
convenient for computations.  Again without loss, we consider a
generic plane projection for $C_1$ and $C_2$ in which only simple,
double-point crossings are present.   The index set $\textbf{I}$ of all
crossings in the planar diagram for the link divides into subsets
${\textbf{I} =
  \textbf{I}^1_1\cup\textbf{I}^1_2\cup\textbf{I}^2_1\cup\textbf{I}^2_2}$,   
depending upon whether the upper and lower strands at each crossing
belong respectively to $C_1$ or $C_2$.  To
each crossing ${a\in\textbf{I}}$ we attach a local writhe
${\w_a=\pm 1}$ according to the handedness, as in Figure
\ref{Writhe}.
In terms of these data, the linking number is computed by any of
the following sums,
\begin{equation}\label{LkDiag}
\lk(C_1,C_2) \,=\, \sum_{a \in \textbf{I}^2_1} \w_a \,=\,
\sum_{b \in \textbf{I}^1_2} \w_b \,=\, \ha \sum_{c \in
  \textbf{I}^2_1 \cup \textbf{I}^1_2} \w_c\,.
\end{equation}
The signed sum of crossings by $C_2$ over $C_1$ can be interpreted
as a signed count of preimages $\varphi_{12}^{-1}(p)$ for ${p\in S^2}$ at the
North Pole, so the first equality in \eqref{LkDiag} 
follows directly from the topological description of $\lk(C_1,C_2)$ as
the degree of $\varphi_{12}$.  One can  
check that our orientation conventions for $T^2$ and $S^2$ are
consistent with the assignment of signs for $\w$ in Figure
\ref{Writhe}.  The second equality in \eqref{LkDiag} follows similarly
with ${p\in S^2}$ at the South Pole, and the third equality is the symmetric
combination of the preceding two.

\subsection{Perspectives on self-linking}

The present article is concerned not with linking but with self-linking, for
which one would like to make sense of $\lk(C_1,C_2)$ as a knot
invariant in the degenerate
case ${C_1 = C_2}$.  This problem does not have a unique solution, and
several notions of self-linking already exist.  Our
purpose is to propose another, motivated by gauge theory
\cite{Beasley:2015,ZhangR:2016} and with an
eye towards higher-order self-linking invariants, defined in the
geometric style of \cite{Bott:95g} as integrals over configuration
spaces of points associated to the knot.

The most naive attempt to define a self-linking number for an
embedded, oriented curve ${C \subset \BR^3}$ is based upon the Gauss
integral in \eqref{Gauss}.  Again parametrizing $C$ as the
image of a smooth map
\begin{equation}
X:S^1\,\longrightarrow\,\BR^3\,,
\end{equation} 
we follow our nose to set 
\begin{equation}\label{SLKZero}
\slk_0(C) \,:=\, \underset{\varepsilon\to 0}{\lim}\int_{T^2 -
  \Delta(\varepsilon)} \left(X \times
  X\right)^*\!\Gamma^*\psi\,.
\end{equation}
Unlike \eqref{Gauss}, the self-linking integrand is now singular along
the diagonal ${\Delta \subset T^2}$, due to the divergence of $\psi$
at the origin in $\BR^3$.  To deal with the singularity, we excise a tubular
neighborhood $\Delta(\varepsilon)$ of width ${0<\varepsilon\ll 1}$
about the diagonal, depicted as the shaded region in Figure
\ref{Deleps}, and we integrate only over the resulting cylinder ${T^2
  - \Delta(\varepsilon)}$.  We finally take the limit ${\varepsilon\to
  0}$ to remove any dependence on the auxiliary parameter.

\begin{figure}[t]
$$\begin{matrix}
\raisebox{-60pt}{\includegraphics[scale=0.60]{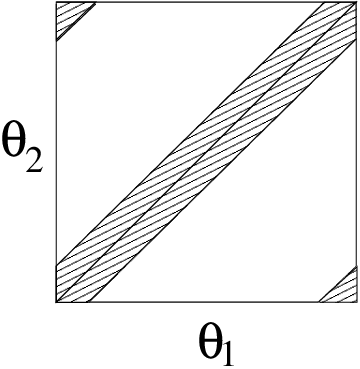}}
\,&\simeq\,\,\,\raisebox{-42pt}{\includegraphics[scale=0.30]{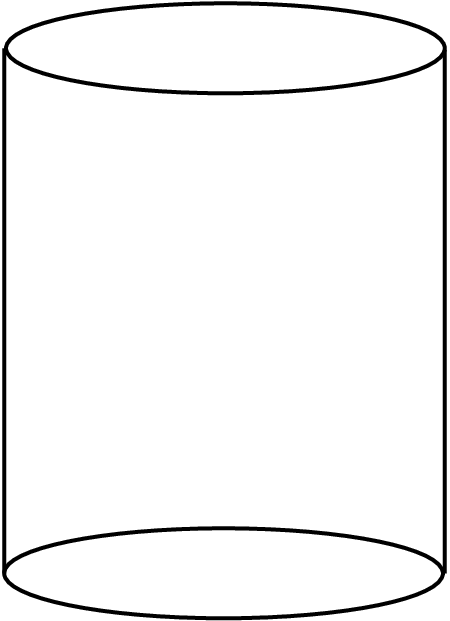}}
\end{matrix}$$
\caption{Neighborhood $\Delta(\varepsilon)$ of the diagonal in
  ${T^2}$.}\label{Deleps}
\end{figure}

There are two essential statements to make about the naive self-linking
integral in \eqref{SLKZero}, both of which go back to the classic
works by C\u{a}lug\u{a}reanu \cite{Calugareanu:59} and Pohl
\cite{Pohl:67}.  First, the singularity along the diagonal $\Delta$ is
integrable, so the limit defining ${\slk_0(C)\in\BR}$ does exist.

Second, $\slk_0(C)$ is {\sl not} a deformation-invariant of $C$, but rather
varies to first-order under a change $\delta X$ in the embedding, 
\begin{equation}\label{VarSLK}
\delta\slk_0(C) \,=\, -\frac{1}{2\pi}\oint_{S^1}\!\!ds\,\,
\epsilon_{\mu\nu\rho}\, \dot{X}^\mu\,\delta
X^\nu\,\raisebox{1.25pt}{$\dddot{X}{}^{\!\rho}$}\,,\qquad\qquad
||\dot{X}(s)||^2 \,=\, 1\,.
\end{equation}
To simplify the expression on the right of \eqref{VarSLK}, we have taken
$X^\mu(s)$ to be a regular, unit-speed parametrization, ${\dot{X}^\mu
  \equiv dX^\mu/ds}$, for which $ds$ is the arc-length
measure on $C$.  Also, $\epsilon_{\mu\nu\rho}$ for ${\mu,\nu,\rho = 1,2,3}$
is the fully anti-symmetric tensor, normalized so that
${\epsilon_{123} = +1}$.  As a corollary of \eqref{VarSLK}, the value
of the naive self-linking integral depends non-trivially on the geometry of ${C
  \subset \BR^3}$, and it cannot generically be an integer,
${\slk_0(C) \notin \BZ}$.

The naive attempt to define a self-linking invariant fails for two
reasons, one topological and one analytic.  

From the topological
perspective, the primary difficulty is that the domain of integration
${T^2 -\Delta(\varepsilon)}$ has a boundary, illustrated in Figure
\ref{Deleps}.  When the embedding map $X$
varies in \eqref{SLKZero}, the integrand changes by a
cohomologically-trivial two-form, whose integral over the cylinder ${T^2
  -\Delta(\varepsilon)}$ receives a non-trivial boundary contribution for
${\varepsilon>0}$.

The analytic aspect of the self-linking anomaly is not as obvious, but
no less important.  To wit, in the detailed computation leading to
\eqref{VarSLK}, the boundary contribution to $\delta\slk_0(C)$ remains non-vanishing even
in the limit ${\varepsilon\to 0}$.  See the Ph.D. thesis of Bar-Natan
\cite{BarNatan:91} for a very nice presentation of the anomaly calculation.  This phenomenon 
depends very much on the analytic behavior of the angular form $\psi$ near the origin,
and need not have occurred had the divergence of $\psi$ been less
rapid.  This observation will be central to our work and is exemplified by the
Fundamental Lemma in Section \ref{Fundamental}.

Because of the anomaly, all notions of self-linking involve a
modification to the naive Gauss integral in \eqref{SLKZero}, as well as
a refinement of the equivalence relation by smooth isotopy for the
curve ${C\subset \BR^3}$.  

For instance, in the original approach of
\cite{Calugareanu:59,Pohl:67},
one adds to $\slk_0(C)$ a counterterm $T(C)$ whose
variation with respect to $X$ is exactly opposite the variation in
\eqref{VarSLK}, 
\begin{equation}\label{VarTC}
\delta T(C) \,=\, -\delta\slk_0(C)\,.
\end{equation}
The required counterterm is nothing more than the total torsion of
$C$ measured with respect to arc-length,
\begin{equation}
T(C) \,=\, \frac{1}{2\pi}\oint_{S^1} \!\!ds\,\,\tau\,,
\end{equation}
where the local Frenet-Serret \cite{doCarmo:76} torsion $\tau$ is given in
terms of $X$ by 
\begin{equation}\label{Tors}
\tau \,=\, \frac{\epsilon_{\mu \nu \rho} \, \dot{X}^\mu \,
  \ddot{X}^\nu \, \raisebox{1.25pt}{$\dddot{X}{}^{\!\rho}$}}{||\dot{X}
  \times \ddot{X}||^2}\,,\qquad\qquad \ddot{X}(s) \,\neq\, 0\,.
\end{equation}
For sake of brevity, we do not review the proof of \eqref{VarTC}
here.  A direct calculation of $\delta T(C)$ can be found in 
Appendix B of \cite{Beasley:2015}.

From the relation in \eqref{VarTC}, the sum 
\begin{equation}\label{SLKTau}
\slk_\tau(C) \,=\, \slk_0(C) \,+\, T(C)\,
\end{equation}
does not change under any small deformation of $C$,
so it provides a reasonable notion of self-linking.  However,
$\slk_\tau(C)$ is also not invariant under arbitrary smooth isotopies
of $C$.  Due to the denominator in \eqref{Tors}, the torsion $\tau$ is
only defined at points of $C$ where ${\ddot{X}(s) \neq 0}$, or
equivalently, where the Frenet-Serret curvature is non-zero.  To make
sense of $T(C)$, we assume that $C$ has everywhere
non-vanishing curvature.  The latter condition is open and so holds
for the generic space curve, but of course it does not hold for all
space curves.  As a result, $\slk_\tau(C)$ is only invariant under
those `non-degenerate' isotopies which preserve the condition of
non-vanishing curvature along $C$.

\begin{figure}[t]
\begin{center}
\includegraphics[scale=0.50]{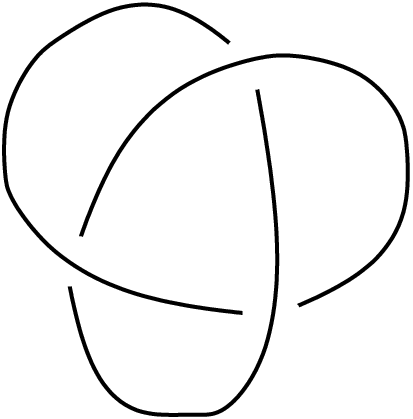}
\caption{Trefoil knot with ${\w(C) = -3}$.}\label{Trefoil}
\end{center}
\end{figure}

Like the linking number, the self-linking number in \eqref{SLKTau} has
a simple diagrammatic description.  To derive it, consider the smooth isotopy
\begin{equation}\label{FLambda}
F_\Lambda:\BR^3\,\longrightarrow\,\BR^3\,,\qquad\qquad F_\Lambda(x,y,z) \,=\,
\left(x,y,\frac{z}{\Lambda}\right),
\end{equation}
where ${\Lambda > 0}$ is a positive real parameter.  Applied to any
curve ${C \subset \BR^3}$, this isotopy flattens $C$ to 
the $xy$-plane as $\Lambda$ grows to infinity.  Rotating $C$ if
necessary, we assume that both $C$ and its projection to the
$xy$-plane have everywhere non-vanishing curvature.  See Figure \ref{Trefoil}
for a diagram of the trefoil knot which satisfies this condition.
Then $F_\Lambda$ is non-degenerate for all values of $\Lambda$, and
$\slk_\tau(C)$ can be evaluated in the limit ${\Lambda\to\infty}$.
For any plane curve, ${\tau = 0}$ identically, so $\slk_\tau(C)$
reduces to $\slk_0(C)$ in this limit.  Otherwise, when $\Lambda$ is
large and $C$ is nearly planar, the naive self-linking integral in
\eqref{SLKZero} reduces to a sum over the local writhe at each
self-crossing of $C$.  We omit a proof of the latter claim, which is
hopefully plausible on the basis of the similar description
\eqref{LkDiag} for $\lk(C_1,C_2)$.  In Section \ref{Local},
we will establish a closely related result as part of our Main Theorem.

Altogether, $\slk_\tau(C)$ is given by the writhe $\w(C)$ of the
planar diagram for $C$,
\begin{equation}
\slk_\tau(C) \,=\, \w(C) \,:=\, \sum_{a \in \textbf{I}} \w_a\,.
\end{equation}
In particular, ${\slk_\tau(C) \in \BZ}$ is an integer, which is not
manifest from the analytic description.  Also, because the assignment
of the writhe ${\w=\pm 1}$ in Figure \ref{Writhe} is
invariant under a reversal of orientation for both strands,
$\slk_\tau(C)$ does not depend upon the orientation of $C$.  Finally,
the various versions of the unknot in Figure \ref{WUnknot} illustrate that
$\slk_\tau(C)$ cannot be a full isotopy invariant of $C$.

\begin{figure}[t]
$$\begin{matrix}
&\includegraphics[scale=0.50]{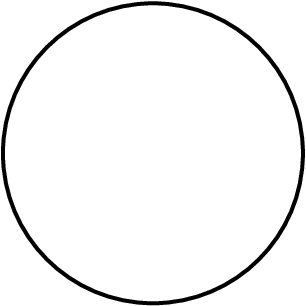} \qquad & \qquad 
\includegraphics[scale=0.50]{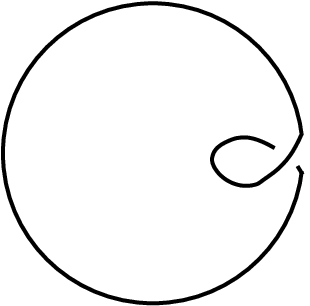} \qquad & \qquad
\includegraphics[scale=0.50]{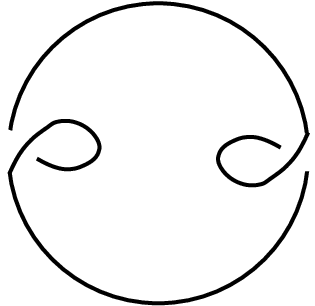}\\[1 ex]
&(a.)\quad \w(C) = 0 \qquad & \qquad (b.)\quad \w(C) = 1 \qquad &
\qquad (c.)\quad
\w(C) = 2
\end{matrix}$$
\caption{Writhe for a selection of unknots.}\label{WUnknot}
\end{figure}

The Frenet-Serret self-linking number $\slk_\tau(C)$ is really a
special case of the more general, and perhaps more familiar, notion of
framed self-linking.  By definition, a framing of
${C \subset \BR^3}$ is a trivialization of the normal bundle $N_C$, up
to homotopy.  Such a trivialization can be specified by a
nowhere-vanishing normal vector field ${n \in \Gamma(C, N_C)}$ along
the knot.  In turn, the normal vector field determines a new curve
$C_n$ obtained by displacing $C$ a small amount in the direction of
$n$, as shown in Figure \ref{Framed}.  

\begin{figure}[h]
\begin{center}
\includegraphics[scale=0.50]{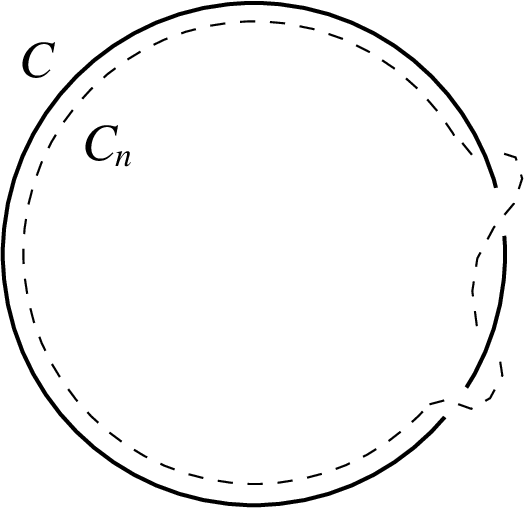}
\caption{A framed unknot, with ${\slk_{\rm f}(C,n) = 2}$.}\label{Framed}
\end{center}
\end{figure}

Given the pair $(C,n)$, the framed self-linking number is defined as
the ordinary linking number of the two disjoint curves $C$ and $C_n$,
\begin{equation}
\slk_{\rm f}(C,n) \,:=\, \lk(C,C_n)\,.
\end{equation}
On the upside, $\slk_{\rm f}(C,n)$ is manifestly an integral isotopy
invariant of the pair $(C,n)$.  On the downside, $\slk_{\rm f}(C,n)$
carries no information about the knot $C$ itself, since the invariant
takes all possible values as the winding number of $n$ about $C$
shifts. 

Had one a canonical choice of framing, the framed self-linking number
$\slk_{\rm f}(C,n)$ could be converted into an honest invariant of
$C$.  No such choice exists for all smooth curves 
simultaneously, but a variety of choices can be made if one restricts
to special classes of curves.  

We have already encountered an example
in the discussion of the Frenet-Serret self-linking $\slk_\tau(C)$,
for which we require ${C\subset \BR^3}$ to have everywhere
non-vanishing curvature.  Not coincidentally, such curves also carry a
canonical Frenet-Serret framing, with unit normal ${n =
  \ddot{X}/||\ddot{X}||}$.  Indeed, a natural guess is that
${\slk_\tau(C) = \slk_{\rm f}(C,n)}$ for the Frenet-Serret normal.

This guess is correct, as can be seen most easily by considering the
behavior of the Frenet-Serret frame in the planar limit
${\Lambda\to\infty}$ from \eqref{FLambda}.  In this limit, the 
Frenet-Serret framing reduces to the blackboard framing in which
the unit normal vector $n$ lies everywhere in the plane of the knot
diagram.  After displacing $C$ by $n$, one finds a planar ribbon
graph, shown in the neighborhood of a positive crossing on the left in Figure
\ref{FSFrame}.  For such a graph, each self-crossing of $C$ corresponds to a
crossing of $C$ by $C_n$ with identical chirality, so automatically
${\w(C) = \lk(C,C_n)}$.  The relation between the writhe and the
Frenet-Serret framing can also be understood in three-dimensional
terms, as indicated to the right in Figure \ref{FSFrame}.

\begin{figure}[t]
$$\begin{matrix}
&\raisebox{-23pt}{\includegraphics[scale=0.50]{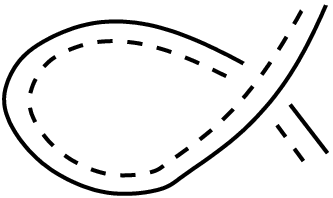}}
\quad&\simeq\quad 
&\raisebox{-40pt}{\includegraphics[scale=0.50]{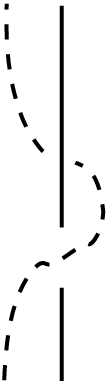}}\\[1.5 ex]
&\w \,=\, +1 \qquad&\qquad &\slk_{\rm f} \,=\, +1
\end{matrix}
$$
\caption{Relation between writhe and Frenet-Serret framing.}\label{FSFrame}
\end{figure}

\subsection{Legendrian knots}

This article serves as the companion to a longer work
\cite{Beasley:2015} in which we develop a new, effectively
supersymmetric formulation of Chern-Simons perturbation theory.  Both
the Frenet-Serret and the framed self-linking invariants were
rediscovered physically
\cite{Guadagnini:91g,Polyakov:1988md,Witten:1989hf} in the setting of
bosonic Chern-Simons theory, and our purpose is to present what one finds with
the addition of ${\CN=2}$ supersymmetry, after all the
baggage from gauge theory is removed.  Details about the gauge
theory are discussed in \cite{Beasley:2015,ZhangR:2016}.

Supersymmetry has two consequences.  We discuss the first now, and we 
defer a discussion of the second to the next subsection.

As a first consequence of supersymmetry, ${C \subset \BR^3}$ must be Legendrian with
respect to the standard contact structure on $\BR^3$, and the
supersymmetric self-linking number will be a Legendrian isotopy
invariant.  For very enjoyable introductions to contact 
topology and the study of Legendrian knots, see
\cite{Etnyre:2003,Etnyre:2006j,Geiges:2006,Geiges:2008}.  At the
moment, we recall only the minimum necessary to state and prove our
Main Theorem.

Throughout, we represent the standard contact structure on $\BR^3$ with
the contact form 
\begin{equation}\label{Contct}
\kappa \,=\, dz \,+\, x\,dy\,-\, y\,dx\,,
\end{equation}
for which the top-form ${\kappa\^d\kappa>0}$ is positive with respect
to the standard orientation on $\BR^3$.  This choice of contact form
respects many symmetries, which are important here and in
\cite{Beasley:2015,ZhangR:2016}.

Manifestly, $\kappa$ is preserved under translations generated by the Reeb field
${\RR=\partial/\partial z}$ as well as rotations in the $xy$-plane.
Though $\kappa$ is not preserved by translations in the $xy$-plane, 
$\kappa$ is preserved by the left-action of the Heisenberg Lie group
${\BH \simeq \BR^3}$ on itself, where the Heisenberg multiplication
${\mu:\BH\times\BH\to\BH}$ is given by 
\begin{equation}\label{Heis}
\mu\big(X_1,X_2\big) \,=\,
\left(x_1 + x_2,\, y_1 + y_2,\, z_1 + z_2 - x_1 y_2 + x_2
  y_1\right),\qquad X_{1,2}\,\in\,\BH\,.
\end{equation}
The origin remains the identity in $\BH$, and the Heisenberg inverse
is ${X^{-1} = \left(-x,-y,-z\right)}$. 
Finally, $\kappa$ is homogeneous of degree two under the parabolic
scaling 
\begin{equation}\label{Parabol}
(x,y,z) \,\longmapsto\, \left(\lambda x,\,\lambda
  y,\,\lambda^2 z\right),\qquad\qquad \lambda \,\in\,\BR_+\,,
\end{equation}
which commutes with Heisenberg multiplication.  As
a result, the parabolic scaling fixes each
contact plane ${H \subset \BR^3}$ in the kernel of $\kappa$.  

A picture  of the family of contact planes ${H =
  \ker\kappa}$ appears in Figure \ref{StandardH}.
The contact planes are approximately horizontal near 
${x=y=0}$, but they twist vertically as one moves
outward from the origin in the $xy$-plane.
\begin{figure}[t]
\begin{center}
\includegraphics[scale=1.0]{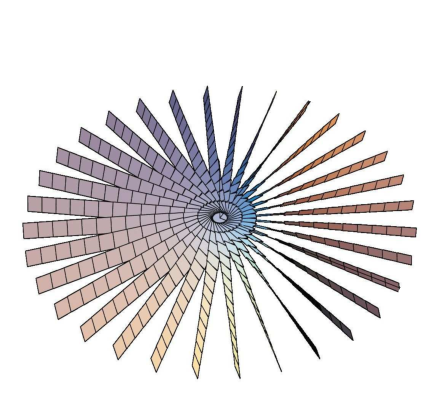}
\caption{The standard radially-symmetric contact structure on
  $\mathbb{R}^{3}$.  (Courtesy of the  {\tt
    Mathematica} routine `CSPlotter' by Matias Dahl.)}\label{StandardH}
\end{center}
\end{figure}

We also require a few facts about Legendrian knots.  By definition, ${C
  \subset \BR^3}$ is Legendrian when the tangent line ${T_pC}$ at any
point ${p\in C}$ lies in the contact plane $H_p$ at that point.
Equivalently, the pullback of $\kappa$ to $C$ vanishes,
\begin{equation}
\kappa\big|_C \,=\, 0 \qquad\Longleftrightarrow\quad C \,\,\hbox{is
  Legendrian}\,, 
\end{equation}
or in terms of a parametrization ${X:S^1\to\BR^3}$,
\begin{equation}\label{LegC}
\frac{dz}{d\theta} \,=\, y \, \frac{dx}{d\theta} \,-\, x \,
\frac{dy}{d\theta}\,,\qquad\qquad X(\theta) \equiv
  \big(x(\theta),\,y(\theta),\,z(\theta)\big)\,.
\end{equation}
Any smooth knot admits a Legendrian representative, so the theory of
Legendrian knots is extremely rich.  Moreover, equivalence by
Legendrian isotopy, ie.~continuous isotopy through a family of Legendrian
knots, strictly refines the usual topological equivalence.  A
given topological knot has infinitely-many inequivalent
Legendrian representatives.

Unlike topological knots, Legendrian knots have canonical plane
projections.  For this reason, Legendrian knots behave in many ways
like plane curves.  
Our interest lies in the so-called Lagrangian projection to
the $xy$-plane,
\begin{equation}
\Pi:\BR^3\,\to\,\BR^2\,,\qquad\qquad \Pi(x,y,z) \,=\, (x,y)\,,
\end{equation}
for which the image $\Pi(C)$ of a Legendrian knot ${C \subset \BR^3}$ is
a smoothly immersed curve.  The smoothness of $\Pi(C)$
is already a non-trivial feature of the Legendrian condition
\eqref{LegC}, since this condition implies that ${\dot{z} = 0}$ at
any point on $C$ where ${\dot{x}=\dot{y}=0}$.  Hence if $X$ is a
regular parametrization of $C$, then ${\Pi \circ X}$ is a
regular parametrization of $\Pi(C)$.  Trivially, $\Pi(C)$ is a
Lagrangian submanifold of $\BR^2$ with respect to the 
symplectic form ${d\kappa = 2\,dx\^dy}$, whence the name.

\begin{figure}[t]
\begin{center}
\includegraphics[scale=0.5]{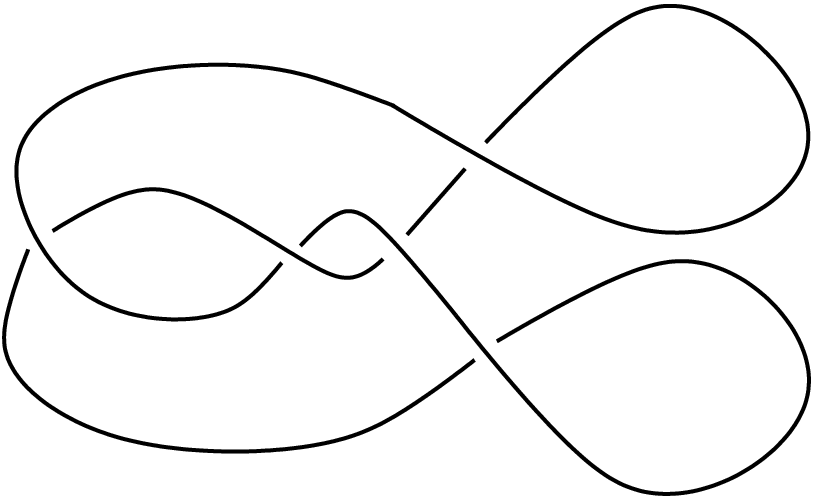}
\caption{Legendrian trefoil knot, with ${\tb(C)=1}$ and
  ${\rot(C)=0}$.}\label{LgTref}
\end{center}
\end{figure}

In Figure \ref{LgTref} we display the Lagrangian projection
of a Legendrian trefoil knot.  To guide the eye, we indicate
over- and under-crossings in the figure.  Unlike for topological knot
diagrams, the crossing information for a Legendrian knot is redundant, since the
spatial configuration of ${C \subset \BR^3}$ can be completely
recovered from the plane curve $\Pi(C)$ by integrating the contact relation in
\eqref{LegC},
\begin{equation}\label{ZTheta}
z(\theta) \,=\, z_0\,+\, \int_0^\theta
d\theta'\left[y(\theta')\,\dot{x}(\theta') \,-\,
  x(\theta')\,\dot{y}(\theta')\right]. 
\end{equation}
Here $z_0$ is the height of $C$ at the basepoint corresponding to
${\theta=0}$.  Due the symmetry of $\kappa$ under translations in $z$,
this constant is both arbitrary and irrelevant. 

Not every immersed curve can be the plane projection of a
Legendrian knot.  For instance, if we take the parameter $\theta$ in
\eqref{ZTheta} to have periodicity $2\pi$, then 
\begin{equation}\label{Period}
0 \,=\, z(2\pi) - z(0) \,=\, \int_0^{2\pi}
d\theta'\left[y(\theta')\,\dot{x}(\theta') \,-\,
  x(\theta')\,\dot{y}(\theta')\right].
\end{equation}
Equivalently by Stokes' Theorem, the oriented $2$-chain $D$ bounded by $\Pi(C)$ must have zero symplectic area,
\begin{equation}
\int_D dx\^dy\,=\, 0\,,
\end{equation}
where each component of $D$ is oriented consistently with ${\partial D
  = C}$.  Also, if ${\theta_1 \neq \theta_2}$ are distinct parameter
values for which ${x(\theta_1) = x(\theta_2)}$ and 
${y(\theta_1)=y(\theta_2)}$, corresponding to the location of a crossing in
$\Pi(C)$, then 
\begin{equation}\label{Embed}
0 \neq z(\theta_2) - z(\theta_1) \,=\, \int_{\theta_1}^{\theta_2}
d\theta'\left[y(\theta')\,\dot{x}(\theta') \,-\,
  x(\theta')\,\dot{y}(\theta')\right].
\end{equation}
The necessary conditions in \eqref{Period} and \eqref{Embed} are sufficient
for the immersed plane curve to lift to an embedded Legendrian knot.
These conditions depend upon the signed areas of the regions enclosed by
$\Pi(C)$, so the Lagrangian projection cannot be manipulated in a
wholly topological fashion \'a la Reidemeister. 

Legendrian knots always carry a canonical framing by the Reeb vector
field ${\RR = \partial/\partial z}$.  Concretely from \eqref{LegC},
a Legendrian curve ${C \subset \BR^3}$ cannot have a vertical tangent, where
${\dot{x}=\dot{y}=0}$ but ${\dot{z}\neq 0}$.  With the choice ${n =
  \RR}$, the framed self-linking number $\slk_{\rm f}(C,n)$ can then be converted
into a Legendrian invariant
\begin{equation}
\tb(C) \,:=\, \slk_{\rm f}(C,\RR)\,\in\,\BZ\,,
\end{equation}
a kind of self-linking number for $C$.

The Thurston-Bennequin invariant $\tb(C)$ can be easily computed from
the Lagrangian projection of $C$.  After a rigid rotation, the 
vertical framing by the Reeb field $\RR$ becomes equivalent to the
planar, blackboard framing of $\Pi(C)$.  But again by Figure \ref{FSFrame}, the
self-linking number in the blackboard framing is exactly the writhe of
the knot diagram.  Thus,
\begin{equation}
\tb(C) \,=\, \w\!\left(\Pi(C)\right).
\end{equation}
Since the writhe is fixed under orientation-reversal,
so too is 
\begin{equation}\label{OrTB}
\tb(-C) = \tb(C)\,.
\end{equation}

The Thurston-Bennequin invariant is one of a pair of classical
Legendrian invariants.  To state the Main Theorem, we also need the other.

Because $C$ is determined by its Lagrangian projection $\Pi(C)$, any
isotopy invariant of immersed plane curves yields a 
Legendrian invariant of $C$.  According to the Whitney-Graustein
Theorem \cite{Whitney:1937}, the unique such invariant of an immersion
${\gamma:S^1\to \BR^2}$ is the rotation number 
\begin{equation}\label{RotGam}
\rot(\gamma) \,=\, \deg\dot{\gamma}\,,\qquad\qquad \dot{\gamma}:S^1
  \to \BR^2 - \{0\}\,,
\end{equation}
defined as the topological degree of the derivative $\dot{\gamma}$.
Equivalently, $\rot(\gamma)$ is the total signed curvature (see
eg.~Exercise 12 in \S $1.5$ of \cite{doCarmo:76}) of the
immersed plane curve,  
\begin{equation}\label{LocRot}
\rot(\gamma) \,=\,
\frac{1}{2\pi}\oint_{S^1}\!\!d\theta\,\,\frac{\dot{\gamma}\times\ddot{\gamma}}{||\dot{\gamma}||^2}\,,
\end{equation}
where we use the shorthand `$\times$' for the scalar cross-product,
\begin{equation}
\dot{\gamma}\times \ddot{\gamma}(\theta) \,\equiv\, \dot{x}(\theta)\,
\ddot{y}(\theta) \,-\, \dot{y}(\theta)\,\ddot{x}(\theta)\,,\qquad\quad
\gamma(\theta) = \left(x(\theta),y(\theta)\right) \in \BR^2\,.
\end{equation}
As will be essential later, the formula for $\rot(\gamma)$ in
\eqref{LocRot} presents the rotation number as a local invariant, in
the sense of being the integral of a locally-defined geometric
quantity along the curve.  With our conventions,
${\rot(\gamma)=1}$ when $\gamma$ is a circle traversed in the
counterclockwise direction.

For the Legendrian knot ${C \subset \BR^3}$, we set 
\begin{equation}
\rot(C) \,:=\, \rot\!\left(\Pi(C)\right) \in\,\BZ\,.
\end{equation}
See Definition $3.5.12$ in \cite{Geiges:2008} for an intrinsically
three-dimensional characterization of $\rot(C)$.  In terms of the
diagram for $\Pi(C)$, the rotation number can be
computed as a signed count of upwards vertical tangencies, as in
Figure \ref{Rotat}.  For the Legendrian trefoil in Figure
\ref{LgTref}, two upwards vertical tangencies occur, but they do so
with opposite signs, so ${\rot(C) = 0}$.

Note that the rotation number depends upon the orientation of the
curve, and under a reversal of orientation, the rotation number
changes sign. So in contrast to the behavior \eqref{OrTB} of the
Thurston-Bennequin invariant,
\begin{equation}
\rot(-C) \,=\, -\rot(C)\,.
\end{equation}
We have not specified an orientation for the trefoil knot in
Figure \ref{LgTref}, but because ${\rot(C) =  0}$, the orientation 
does not matter.

\begin{figure}[t]
$$\begin{matrix}
&\includegraphics[scale=0.50]{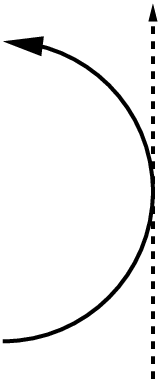}\qquad\quad & \quad\qquad 
&\includegraphics[scale=0.50]{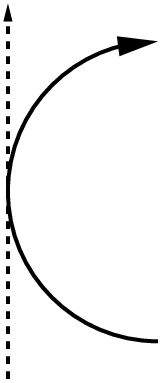}\\
&(a.)\quad\rot=+1 \qquad\quad & \quad\qquad &(b.)\quad\rot=-1
\end{matrix}$$
\caption{Rotation number at upwards tangencies.}\label{Rotat}
\end{figure}

\subsection{Main theorem}\label{Mainthm}

As its second consequence, supersymmetry modifies the angular form
${\psi\in\Omega^2\big(\BR^3-\{0\}\big)}$ which enters the elementary
Gauss linking integral \eqref{Gauss}.  Because the Legendrian condition
on ${C \subset \BR^3}$ does not respect the Euclidean action by
$SO(3)$, we forgo the seemingly-natural requirement that $\psi$ itself be
$SO(3)$-invariant.  In recompense, the supersymmetric version of
$\psi$ will enjoy superior analytic behavior near the origin in
$\BR^3$.

As heuristic motivation for the following, one should imagine that we
alter the angular form $\psi$ in \eqref{Angular} by
concentrating the support for the generator of 
$H^2(S^2;\BZ)$ at the North Pole on the sphere.  This trick
is well-known to aficionados of Chern-Simons perturbation theory, but here
we must take care to preserve the underlying symmetries of the contact
structure on $\BR^3$.

More precisely, we introduce the Gaussian two-form ${\omega_\Lambda \in
  \Omega^2\big(\BR^2\big)}$ on the $xy$-plane,
\begin{equation}\label{OmLamb}
\omega_\Lambda \,=\, \frac{\Lambda}{2\pi}\,
\e{\!-\Lambda (x^2 +
      y^2)/2}\,dx\^dy\,,\qquad\qquad \Lambda > 0\,.
\end{equation}
Here $\Lambda$ is a positive real parameter which sets the width
of the Gaussian, and in the limit ${\Lambda\to\infty}$, the Gaussian
becomes a delta-function concentrated at the origin in $\BR^2$.
Clearly $\omega_\Lambda$ is invariant under rotations, and
$\omega_\Lambda$ is normalized so that for all $\Lambda$,
\begin{equation}\label{NormOm}
\int_{\BR^2} \omega_\Lambda \,=\, 1\,.
\end{equation}
The various factors of two in \eqref{OmLamb} are standard and could be
absorbed into $\Lambda$ if desired.

Though the support of $\omega_\Lambda$ is not compact,
$\omega_\Lambda$ decays very rapidly at infinity.  At least morally,
$\omega_\Lambda$ should be regarded as a generator for the
compactly-supported cohomology ${H^2_c\big(\BR^2;\BZ\big)\simeq\BZ}$ of the
plane, on the same footing as the unit-area form on the sphere.  
Because the normalization condition in
\eqref{NormOm} does not depend on $\Lambda$, neither does the
cohomology class of $\omega_\Lambda$.  Explicitly, a small computation shows
\begin{equation}\label{DelOm}
\frac{\partial\omega_\Lambda}{\partial\Lambda} \,=\, \frac{1}{2\pi}\left[1 \,-\,
  \frac{\Lambda (x^2+y^2)}{2}\right] \e{\!-\Lambda\,(x^2 +
      y^2)/2}\,dx\^dy \,=\, d\alpha_\Lambda\,,
\end{equation}
where
\begin{equation}\label{TransA}
\alpha_\Lambda \,=\, \frac{1}{4\pi}\,\e{\!-\Lambda (x^2 +
      y^2)/2}\left(x\,dy - y\,dx\right) \,\in\,\Omega^1\big(\BR^2\big).
\end{equation}
The transgression form $\alpha_\Lambda$ will reappear in the proof of our
Fundamental Lemma.  In the meantime, note that $\alpha_\Lambda$ is also
$SO(2)$-invariant, as required by the relation to $\omega_\Lambda$ in
\eqref{DelOm}.

We next introduce the planar analogue for the retraction onto
$S^2$ in \eqref{Retrac}.  To preserve the parabolic scaling in
\eqref{Parabol}, we consider the map 
${\varrho_+:\BR^3_+\to\BR^2}$ defined on the upper half-space
$\BR^3_+$ by 
\begin{equation}\label{Parabolic}
\varrho_+(x,y,z) \,=\,
\left(\frac{x}{\sqrt{z}},\, \frac{y}{\sqrt{z}}\right),\qquad\qquad z>0\,.
\end{equation}
Trivially, the image of $\varrho_+$ is preserved under the scaling for which
$x$ and $y$ have weight one and $z$ has weight two.

Using the planar retraction in \eqref{Parabolic}, we pull the Gaussian form
${\omega_\Lambda\in\Omega^2(\BR^2)}$ back to a new two-form 
\begin{equation}\label{PullChi}
\begin{aligned}
\chi_\Lambda \,&=\, \varrho_+^*\omega_\Lambda\,,\\
&=\, \frac{\Lambda}{2\pi z} \e{\!-\Lambda (x^2 +
      y^2)/2 z} \left[dx\^dy \,+\,  \ha \left(x \,dy -
        y\,dx\right)\!\^\frac{dz}{z}\right],\qquad z>0\,.
\end{aligned}
\end{equation}
By construction, $\chi_\Lambda$ is invariant under the parabolic scaling
and the action of $SO(2)$, but not $SO(3)$.  

Of course, $\chi_\Lambda$ strongly resembles the heat kernel for the
Laplacian in two dimensions.  As with the heat kernel, so long as
${x^2+y^2 \neq 0}$, the expression in \eqref{PullChi}  vanishes
smoothly as ${z\to 0}$ from above.  To define $\chi_\Lambda$ on the
entire punctured space ${\BR^3-\{0\}}$, we simply extend by zero,
\begin{equation}
\chi_\Lambda \,=\, 0 \,,\qquad\qquad z \le 0\,.
\end{equation}
With this choice, ${\chi_\Lambda\in\Omega^2(\BR^3-\{0\})}$ is
automatically closed away from $\{0\}$ and generates the
cohomology ${H^2\big(\BR^3-\{0\};\BZ\big)}$.  For instance, over the unit
sphere ${S^2\subset \BR^3}$,
\begin{equation}\label{UnitNorm}
\int_{S^2}\chi_\Lambda \,=\,
\int_{S^2\cap\BR^3_+}\mskip-10mu\chi_\Lambda \,=\,
\int_{\BR^2}\omega_\Lambda \,=\, 1\,.
\end{equation}

We have yet to incorporate the Heisenberg symmetry of the contact
structure.  In the elementary Gauss linking integral, the abelian structure
of $\BR^3$ as a vector space enters implicitly through the definition
of the difference map $\Gamma$ in \eqref{DiffG}.  To preserve instead
the non-abelian symmetry by left-translation in ${\BH\simeq\BR^3}$, we consider a
Heisenberg difference map ${\widehat\Gamma:\BH\times\BH\to\BH}$, given
by 
\begin{equation}\label{HeisG}
\begin{aligned}
\widehat\Gamma(X_1,X_2) \,&=\, \mu(X_1^{-1},\,X_2) \,=\,
\mu(-X_1,X_2)\,,\qquad\qquad X_{1,2}\,\in\,\BH\,,\\
&=\, \left(x_2-x_1,\,y_2-y_1,\,z_2 - z_1 + x_1 y_2 - x_2 y_1\right).
\end{aligned}
\end{equation}
Since $\mu$ is the Heisenberg multiplication, ${\widehat\Gamma(X_1,X_2)
= X_1^{-1}\cdot X_2}$ in the usual shorthand.  This combination of
$X_1$ and $X_2$ is invariant under simultaneous left-multiplication,
\begin{equation}
\widehat\Gamma(g\cdot X_1,g\cdot X_2) =
\widehat\Gamma(X_1,X_2)\,,\qquad\qquad g\,\in\,\BH\,,
\end{equation}
and we have selected the relative signs of $X_1$ and $X_2$ in
\eqref{HeisG} to agree with the convention for the abelian difference
in \eqref{DiffG}.

\begin{prop}[Heisenberg Linking]\label{HLink}
Suppose ${C_i\subset \BR^3}$ for ${i=1,2}$ are disjoint oriented curves, not
necessarily Legendrian, with
respective parametrizations ${X_i:S^1\to\BR^3}$.  Then
\begin{equation}
\lk(C_1,C_2) \,=\, \int_{T^2} \left(X_1\times
  X_2\right)^*\!\widehat\Gamma^*\chi_\Lambda\,,\qquad\qquad \Lambda>0\,.
\end{equation}
\end{prop}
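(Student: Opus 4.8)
The plan is to reduce the identity to the classical Gauss formula \eqref{Gauss} by showing that replacing the abelian difference $\Gamma$ of \eqref{DiffG} by the Heisenberg difference $\widehat\Gamma$ of \eqref{HeisG}, and the angular form $\psi$ by $\chi_\Lambda$, changes neither the relevant cohomology class nor the homotopy class of the map whose degree is being measured. Concretely, since $\chi_\Lambda$ is closed on $\BR^3-\{0\}$ and, by \eqref{UnitNorm}, integrates to $1$ over the unit sphere, and since the radial retraction $\varrho$ of \eqref{Retrac} exhibits $\BR^3-\{0\}$ as homotopy equivalent to $S^2$, we have $[\chi_\Lambda] = \varrho^*[\omega]$ in $H^2(\BR^3-\{0\};\BR)$, the same class as $\psi = \varrho^*\omega$. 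Hence for every smooth map $f:T^2\to\BR^3-\{0\}$,
\[
\int_{T^2} f^*\chi_\Lambda \,=\, \int_{T^2} (\varrho\circ f)^*\omega \,=\, \deg(\varrho\circ f),
\]
because $\omega$ is the integral generator of $H^2(S^2;\BZ)$ and $T^2$ is closed and oriented. Taking $f_0 = \Gamma\circ(X_1\times X_2)$ recovers $\deg\varphi_{12} = \lk(C_1,C_2)$ by \eqref{Degree}, with $\varphi_{12}$ as in \eqref{LkPhi}, while taking $f_1 = \widehat\Gamma\circ(X_1\times X_2)$ produces exactly the integral in the Proposition; so it suffices to show $f_0$ and $f_1$ are homotopic as maps into $\BR^3-\{0\}$.

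First I would note that the integrand is honestly smooth on $T^2$: comparing \eqref{DiffG} with \eqref{HeisG}, $\widehat\Gamma(X_1,X_2)$ and $\Gamma(X_1,X_2)$ have the same $(x,y)$-components $(x_2-x_1,\,y_2-y_1)$ and differ only by the term $x_1 y_2 - x_2 y_1$ in the $z$-component, so $\widehat\Gamma(X_1,X_2)$ vanishes exactly when $X_1 = X_2$, which never happens on $T^2$ since $C_1$ and $C_2$ are disjoint. Then the natural homotopy is to linearly interpolate the $z$-component,
\[
H_t(X_1,X_2) \,=\, \big(x_2 - x_1,\ y_2 - y_1,\ z_2 - z_1 + t\,(x_1 y_2 - x_2 y_1)\big), \qquad t\in[0,1],
\]
so that $H_0 = \Gamma$ and $H_1 = \widehat\Gamma$. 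The key point is that $H_t(X_1,X_2)\ne 0$ for all $t$ whenever $X_1\ne X_2$: if $(x_1,y_1)\ne(x_2,y_2)$ the first two components are not both zero; and if $(x_1,y_1)=(x_2,y_2)$ then $x_1 y_2 - x_2 y_1 = 0$, so the $z$-component equals $z_2 - z_1 \ne 0$, independent of $t$. Therefore $H_t\circ(X_1\times X_2):T^2\to\BR^3-\{0\}$ is a genuine homotopy from $f_0$ to $f_1$, and since $\chi_\Lambda$ is closed, $\int_{T^2} f_1^*\chi_\Lambda = \int_{T^2} f_0^*\chi_\Lambda$. Combining this with the previous paragraph yields the Proposition, and the right-hand side is manifestly independent of $\Lambda$.

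I do not anticipate a real obstacle here; the whole argument is a short homotopy computation. The one place that needs genuine care is the non-vanishing of the interpolating maps $H_t$, and this is precisely where the structure of the Heisenberg cocycle $x_1 y_2 - x_2 y_1$ enters: it vanishes exactly on the locus $(x_1,y_1)=(x_2,y_2)$, where the $(x,y)$-part already degenerates, so no new zeros are created along the path. One can instead phrase the homotopy group-theoretically, as an interpolation between the vector difference $X_2 - X_1$ and the group quotient $X_1^{-1}\cdot X_2$, but the explicit formula above makes the non-vanishing check immediate. As an independent consistency check, one may also verify directly from \eqref{PullChi} that $(X_1\times X_2)^*\widehat\Gamma^*\chi_\Lambda$ is a smooth $2$-form on $T^2$ whose derivative in $\Lambda$ is exact, via the transgression $\alpha_\Lambda$ of \eqref{TransA}, consistent with the $\Lambda$-independence just observed.
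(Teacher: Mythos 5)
Your argument is correct and follows essentially the same route as the paper: identify $[\chi_\Lambda]=[\psi]$ in $H^2(\BR^3-\{0\})$ via the normalization $\int_{S^2}\chi_\Lambda=1$, and deform $\widehat\Gamma$ to $\Gamma$ through the family $\widehat\Gamma_\hbar$ (your $H_t$), which is exactly the interpolation the paper writes down. The only addition is your explicit check that $H_t\circ(X_1\times X_2)$ avoids the origin, which the paper leaves implicit; that check is right, since on the locus $(x_1,y_1)=(x_2,y_2)$ the cocycle $x_1y_2-x_2y_1$ vanishes and disjointness forces $z_2\neq z_1$.
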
\noindent
This proposition follows from the fact that the heat form
$\chi_\Lambda$ is equivalent in cohomology to the global angular form $\psi$,
\begin{equation}
[\chi_\Lambda]\,=\,[\psi] \,\in\, H^2\big(\BR^3-\{0\};\BZ\big)\,.
\end{equation}
Also, the Heisenberg difference $\widehat\Gamma$ in \eqref{HeisG} is
homotopic to the abelian difference $\Gamma$.  To see this, set
\begin{equation}\label{Mut}
\mu_\hbar(X_1,X_2) \,=\, \big(x_1 + x_2,\, y_1 + y_2,\, z_1 + z_2 -
  \hbar\left(x_1 y_2 - x_2 y_1\right)\big)\,,\qquad \hbar\,\in\,[0,1]\,.
\end{equation}
Then 
\begin{equation}
\widehat\Gamma_\hbar(X_1,X_2) \,=\,
\mu_\hbar\!\left(X_1^{-1},\,X_2\right) \,=\,
\big(x_2-x_1,\,y_2-y_1,\,z_2 - z_1 + \hbar\,(x_1 y_2 - x_2
    y_1)\big)
\end{equation}
smoothly interpolates from the abelian to the Heisenberg difference
as the Planck constant $\hbar$ ranges over the interval from
${\hbar=0}$ to ${\hbar=1}$.\qquad$\square$

Though the angular form $\psi$ and the heat form $\chi_\Lambda$ agree
in cohomology, they behave very differently near the origin in
$\BR^3$.  This analytic distinction matters crucially for
approaches to self-linking.  

Let ${C \subset \BR^3}$ be an oriented Legendrian
curve, with regular parametrization ${X:S^1\to\BR^3}$.  By analogy to the naive
Gauss self-linking integral in \eqref{SLKZero}, we consider a new
Heisenberg self-linking integral 
\begin{equation}\label{eq:1}
\slk_\kappa(C) \,:=\,  \underset{\varepsilon\to 0}{\lim}\int_{T^2 -
  \Delta(\varepsilon)} \left(X \times
  X\right)^*\!\widehat\Gamma_\hbar^*\,\chi_\Lambda\,,\qquad\quad
\hbar\,\in\,[0,1]\,,\qquad\quad \Lambda > 0\,.
\end{equation}
The remainder of the article is devoted to the proof of the
following Main Theorem.
\begin{thm}[Legendrian Self-Linking]\label{MT}
The limit defining $\slk_\kappa(C)$ exists.  The value of
$\slk_\kappa(C)$ is independent of $\Lambda$ and depends only upon the
Legendrian isotopy class of $C$.   In terms of the Thurston-Bennequin
invariant $\tb(C)$ and the rotation number $\rot(C)$,
\begin{equation}\label{eq:2}
\slk_\kappa(C)\,=\,
\begin{cases}
\quad\tb(C)-\rot(C)\,,\qquad &\hbar\,\neq\,1\,,\\
\quad\tb(C)\,,\qquad &\hbar\,=\,1\,.
\end{cases}
\end{equation}
\end{thm}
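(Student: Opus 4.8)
The plan is to decompose $\slk_\kappa(C)$ into two contributions: an "interior" piece that is genuinely topological, and a "diagonal" piece that localizes near $\Delta\subset T^2$ and produces the anomalous rotation-number term. First I would establish existence of the limit and $\Lambda$-independence. Since $[\chi_\Lambda]=[\psi]$ in $H^2(\BR^3-\{0\};\BZ)$ by Proposition \ref{HLink}, the difference $\chi_\Lambda-\psi=d\beta_\Lambda$ for some transgression $\beta_\Lambda$, and likewise $\partial_\Lambda\chi_\Lambda=d(\varrho_+^*\alpha_\Lambda)$ by \eqref{DelOm}. Pulling back under $(X\times X)^*\widehat\Gamma_\hbar^*$ and applying Stokes' Theorem on $T^2-\Delta(\varepsilon)$ reduces both the $\Lambda$-variation and the comparison with $\psi$ to integrals over the two boundary circles $\partial\Delta(\varepsilon)$. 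The key estimate — this is where the superior analytic behavior of $\chi_\Lambda$ advertised in the introduction enters — is that $\widehat\Gamma_\hbar$ restricted near the diagonal behaves, to leading order, like the parabolic blow-up: if $s'=s+t$ with $t$ small, then the Legendrian condition \eqref{LegC} forces $z_2-z_1+\hbar(x_1y_2-x_2y_1)$ to vanish to order $t^2$ (not order $t$), so that $\varrho_+\circ\widehat\Gamma_\hbar$ lands in a bounded region of $\BR^2$ and the Gaussian factor $e^{-\Lambda(\cdots)/2z}$ in \eqref{PullChi} stays controlled. I expect the boundary terms coming from $\beta_\Lambda$ and $\varrho_+^*\alpha_\Lambda$ to vanish as $\varepsilon\to 0$ precisely because of this order-$t^2$ vanishing, which is the analogue of the "Fundamental Lemma" the introduction promises; granting that, existence of the limit and $\Lambda$-independence follow.

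Next I would identify the value. For $\Lambda$-independence in hand, I would take $\Lambda\to\infty$, where $\omega_\Lambda\to\delta_0\,dx\wedge dy$ and the integral over $T^2-\Delta(\varepsilon)$ becomes a signed count of pairs $(s,s')$ with $s\neq s'$ for which $\Pi(X(s))=\Pi(X(s'))$ and the sign of the "height difference" $z(s')-z(s)+\hbar(x(s)y(s')-x(s')y(s))$ is positive (or whichever sign the orientation of $\varrho_+$ dictates). Off the diagonal these pairs are exactly the double points of the Lagrangian projection $\Pi(C)$, and the contact relation \eqref{ZTheta} together with \eqref{Embed} shows the sign at each such crossing is the local writhe $\w_a$; summing gives $\w(\Pi(C))=\tb(C)$. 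The remaining subtlety is the contribution as $s'\to s$: here $\widehat\Gamma_\hbar$ limits to the tangent vector $\dot X(s)$ (suitably rescaled), whose Lagrangian projection is $\dot\gamma(s)$ with $\gamma=\Pi\circ X$, and the localized integral of $\omega_\Lambda$ over the infinitesimal annulus picks up the winding of $\dot\gamma/\|\dot\gamma\|$, i.e. $\pm\rot(\gamma)=\pm\rot(C)$ via \eqref{LocRot}. The sign and indeed the presence of this term depends on whether the quadratic form $z_2-z_1+\hbar(x_1y_2-x_2y_1)$ near the diagonal is definite: for $\hbar\neq 1$ it is (after using \eqref{LegC} to expand $z(s+t)-z(s)$ to second order, one finds a nonzero coefficient proportional to $1-\hbar$ times $\dot\gamma\times\ddot\gamma$... up to curvature), so the retraction $\varrho_+$ is well-defined near $\Delta$ and contributes $-\rot(C)$; for $\hbar=1$ the quadratic term degenerates, the diagonal contribution drops out, and one is left with $\tb(C)$ alone.

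The main obstacle, and the heart of the argument, is the careful analysis near the diagonal: one must (i) compute the second-order Taylor expansion of $\widehat\Gamma_\hbar(X(s),X(s+t))$ along a Legendrian curve, using \eqref{LegC} to kill the would-be linear term in the $z$-component, (ii) show the pulled-back form $(X\times X)^*\widehat\Gamma_\hbar^*\chi_\Lambda$ is integrable across $\Delta$ so the $\varepsilon\to 0$ limit exists, (iii) evaluate the exact diagonal contribution, distinguishing $\hbar<1$ (nondegenerate quadratic, yields $-\rot(C)$) from $\hbar=1$ (degenerate, yields $0$), and (iv) verify that the boundary transgression terms controlling $\Lambda$-independence genuinely vanish in the limit — this last point is the analogue of the classical Pohl/C\u{a}lug\u{a}reanu boundary computation but with the heat form replacing $\psi$, and it is exactly the place where choosing $\chi_\Lambda$ over $\psi$ pays off. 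Once these local computations are done, Legendrian isotopy invariance is automatic: $\tb$ and $\rot$ are Legendrian invariants, and the formula \eqref{eq:2} exhibits $\slk_\kappa(C)$ as a combination of them.
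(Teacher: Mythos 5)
Your overall strategy coincides with the paper's: Stokes' Theorem applied to $\partial_\Lambda\chi_\Lambda=d(\varrho_+^*\alpha_\Lambda)$ to reduce $\Lambda$-independence to boundary terms at $\partial\Delta(\varepsilon)$, followed by a planar limit $\Lambda\to\infty$ that localizes the integral at crossings (giving $\tb(C)$) and at the diagonal (giving $-\rot(C)$ or $0$). But your key analytic claim about the near-diagonal behavior is wrong, and the error propagates into both halves of the argument. Writing $\theta_2=\theta_1+\eta$, the Legendrian condition gives $\dot z=-\gamma\times\dot\gamma$, so
\begin{equation*}
\widehat\Delta z \,=\, -\eta\,(1-\hbar)\,(\gamma\times\dot\gamma)\,+\,\CO(\eta^2)\qquad(\hbar\neq 1),
\end{equation*}
which vanishes to \emph{first} order in $\eta$, not second; only at $\hbar=1$ do the linear and quadratic terms cancel, and there the leading term is \emph{cubic}, $\widehat\Delta z=-\tfrac16\eta^3(\dot\gamma\times\ddot\gamma)+\CO(\eta^4)$. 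Consequently, for $\hbar\neq 1$ the Gaussian argument $\Lambda(\Delta x^2+\Delta y^2)/2\widehat\Delta z$ tends to zero on the diagonal, the exponential tends to $1$ rather than decaying, and the boundary integrand is only $\CO(\varepsilon)$ by virtue of the explicit prefactor $(\Delta x\,d\Delta y-\Delta y\,d\Delta x)/\widehat\Delta z\sim\eta$ — there is no "order-$t^2$" suppression doing the work. You have also confused the two scalar cross-products: the linear coefficient of $\widehat\Delta z$ is $(1-\hbar)(\gamma\times\dot\gamma)=-(1-\hbar)\dot z$, not "$1-\hbar$ times $\dot\gamma\times\ddot\gamma$"; the curvature $\dot\gamma\times\ddot\gamma$ enters only through the prefactor of the heat form and is what ultimately integrates to $\rot(C)$.

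The concrete gap this opens is at the critical points of the height function. Since $\gamma\times\dot\gamma=-\dot z$ necessarily vanishes at the highest and lowest points of $C$, your asserted "definiteness" of the near-diagonal expansion for $\hbar\neq 1$ is false, and the naive estimate that the boundary transgression term is $\CO(\varepsilon)$ breaks down exactly there: the factor $1/\widehat\Delta z$ blows up relative to $\eta^2$ as one approaches a zero of $\gamma\times\dot\gamma$. The paper has to impose a Morse hypothesis on $z(\theta)$ and prove a separate calculus lemma (Lemma \ref{CalcLM}, bounding $\int_0^1 d\phi\,(\varepsilon/f)\e{-\varepsilon/f}$ for $f$ vanishing linearly at an endpoint) to show the boundary contribution still vanishes as $\varepsilon\to 0$; the same degenerate locus must be excised and controlled in the diagonal localization that produces $-\rot(C)$. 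Without this your Fundamental-Lemma step and your evaluation of the diagonal contribution are both incomplete. The crossing contribution and the final assembly $\tb(C)-\rot(C)$ versus $\tb(C)$ are otherwise in line with the paper.
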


Informally, the Main Theorem states that the framing anomaly for knots
in bosonic Chern-Simons theory is absent in supersymmetric
Chern-Simons theory.  The corresponding statement for Seifert-fibered
three-manifolds was observed previously in \cite{Beasley:2005vf}.  The
Main Theorem is also consistent with results of Fuchs and Tabachnikov
\cite{Fuchs:99a} identifying $\tb(C)$ and $\rot(C)$ as the only order
${\le 1}$ finite-type invariants of Legendrian knots.

The strategy of proof for Theorem \ref{MT} is straightforward.  We
first demonstrate that $\slk_\kappa(C)$ is independent of the
parameter $\Lambda$ which sets the width of the Gaussian in
$\chi_\Lambda$.  For any ${\varepsilon > 0}$, the derivative of 
$\slk_\kappa(C)$ with respect to $\Lambda$ is given in terms of a boundary
integral of the transgression form $\alpha_\Lambda$ in
\eqref{TransA}.  The content of our Fundamental Lemma in Section
\ref{Fundamental} is to demonstrate that the potentially anomalous
boundary contribution from $\alpha_\Lambda$ vanishes in the limit
${\varepsilon\to 0}$, due to the rapid decay of the heat kernel away
from the diagonal.

In Section \ref{MainThm} we directly evaluate the self-linking integral
in the limit ${\Lambda\to\infty}$.  In this regime, the
self-linking integrand is non-negligible only in the neighborhood of
points on $T^2$ which map under the product ${X \times X}$ to pairs of
points ${p,q\in C}$ that become coincident under the Lagrangian 
projection to the $xy$-plane, ie.~${\Pi(p)=\Pi(q)}$.  Such
points on $T^2$ correspond either to the preimage of 
crossings in $\Pi(C)$, or more trivially, to points on the
diagonal ${\Delta\subset T^2}$.  The local contribution from the
crossings leads to the appearance of the Thurston-Bennequin invariant
$\tb(C)$, while the local contribution from the
diagonal $\Delta$ leads to the appearance of the rotation number
$\rot(C)$ for ${\hbar \neq 1}$.  At the special value ${\hbar = 1}$,
the Heisenberg symmetry of the integrand is restored, and
the anomalous contribution from $\Delta$ vanishes.  

The Legendrian condition is used crucially throughout the 
analysis.  Invariance under Legendrian isotopy follows {\sl a
  postiori} from the formula in \eqref{eq:2}.\footnote{A direct computation
of the Legendrian variation $\delta\slk_\kappa(C)$, achieved for
$\delta\slk_0(C)$ in \eqref{VarSLK}, involves some formidable
differential algebra and did not appear practical to these authors.}

Let us emphasize one striking feature of Theorem \ref{MT}, which is
perhaps best appreciated after one works through the localization
computation in Section \ref{Local}.  Namely, since the coefficients of
$\tb(C)$ and $\rot(C)$ in \eqref{eq:2} are integers, so is the value of
$\slk_\kappa(C)$!  The coefficient of $\tb(C)$ is directly related to
our normalization condition \eqref{UnitNorm} on the heat form
$\chi_\Lambda$, required to recover the usual linking number in
Proposition \ref{HLink}.  Thus the coefficient of
$\tb(C)$ is fixed by fiat to unity.  In contrast, the coefficient of $\rot(C)$
is determined by a delicate calculation near the
diagonal ${\Delta\subset T^2}$, so its integrality for all $\hbar$ is
a non-trivial feature of the Legendrian self-linking integral.

From the physical perspective, integrality of $\slk_\kappa(C)$ is
necessary for gauge invariance as well as consistency with
standard lore about non-renormalization and the infrared behavior of
supersymmetric Yang-Mills-Chern-Simons theory.  See \S $2.3$ of
\cite{Beasley:2015} for a discussion of this statement.  However, on the
principle that no integer appears by chance, a simple
topological explanation for the integrality of ${\slk_\kappa(C)\in\BZ}$ would
be nice to have.  

For instance, the difference of classical
Legendrian invariants ${\tb(C)-\rot(C)}$ in \eqref{eq:2} occurs
naturally in contact topology as the transverse self-linking invariant
$\slk(C_+)$ of the canonical positive
transverse push-off $C_+$ of $C$ (Proposition $3.5.36$ in
\cite{Geiges:2008}).  The transverse self-linking invariant $\slk(C_+)$ can be
interpreted in terms of a relative Euler class on a Seifert surface for $C_+$,
so its appearance in Theorem \ref{MT} is surely no accident.

\subsection{Notation and conventions}

For the convenience of the reader, we summarize the notation and
conventions used in the rest of the paper.
\begin{itemize}
\item $\BR^3$ has Euclidean coordinates $(x,y,z)$ and is oriented by
  ${dx\^dy\^dz}$.  
\item The map ${\Pi:\BR^3\to\BR^2}$ is the projection onto the $xy$-plane.
\item ${\kappa = dz + x\,dy - y\,dx}$ is the standard
  radially-symmetric contact form, positive with respect to the
  orientation on $\BR^3$.
\item ${C\subset \BR^3}$ is an oriented Legendrian knot, with regular
  parametrization ${X:S^1\to\BR^3}$.
\item ${\theta \sim \theta+2\pi}$ is an angular coordinate on $S^1$,
  compatible under $X$ with the given orientation on $C$.  We
  abbreviate ${dX/d\theta \equiv \dot{X}(\theta)}$, and so on.
\item The torus ${T^2 = S^1 \times S^1}$ has angular coordinates
  $(\theta_1,\theta_2)$ and is oriented by ${d\theta_1\^d\theta_2}$.
  The diagonal ${\Delta \subset T^2}$ is the subset where 
  ${\theta_1=\theta_2}$.  
\item For ${\varepsilon>0}$, a tubular neighborhood
  $\Delta(\varepsilon)$ of the diagonal ${\Delta\subset T^2}$ is parametrized by
  ${\theta_1=\phi}$ and ${\theta_2=\phi+\eta}$ for
  ${|\eta|<\varepsilon}$.  The cylinder ${T^2-\Delta(\varepsilon)}$
  has boundary circles $S^1_\pm$ on which ${\eta=\pm\varepsilon}$, respectively.
\item ${\gamma:S^1\to\BR^2}$ is an immersed plane curve which is the
  Lagrangian projection of $C$, ie.~${\gamma=\Pi\circ X}$.  
\item We use the abbreviation `$\times$' for the scalar cross-product
  on the plane, as in 
\begin{equation*}
\gamma\times \dot{\gamma}(\theta) \,\equiv\, x(\theta)\,
\dot{y}(\theta) \,-\, y(\theta)\,\dot{x}(\theta)\,,\qquad\quad
\gamma(\theta) = \left(x(\theta),y(\theta)\right) \in \BR^2\,.
\end{equation*} 
\item The contact form $\kappa$ is left-invariant with respect to the
  Heisenberg multiplication 
\begin{equation*}
\begin{aligned}
&\mu:\BH\times\BH\to\BH\,,\qquad\qquad \BH\simeq \BR^3\,,\qquad\qquad
[\hbar=1]\\ 
&\mu\big(X_1,X_2\big) \,=\,
\left(x_1 + x_2,\, y_1 + y_2,\, z_1 + z_2 - x_1 y_2 + x_2
  y_1\right).
\end{aligned}
\end{equation*}
More generally, for other values of $\hbar$ set 
\begin{equation*}
\mu_\hbar(X_1,X_2) \,=\, \big(x_1 + x_2,\, y_1 + y_2,\, z_1 + z_2 -
  \hbar\left(x_1 y_2 - x_2 y_1\right)\big)\,.
\end{equation*}
\item The left-invariant Heisenberg difference
  ${\widehat\Gamma_\hbar:\BH\times\BH\to\BH}$ for
  ${\hbar\in[0,1]}$ is given by 
\begin{equation*}
\begin{aligned}
\widehat\Gamma_\hbar(X_1,X_2) \,&=\, \mu_\hbar(X_1^{-1},\,X_2)\,,\\
&=\, \left(x_2-x_1,\,y_2-y_1,\,z_2 - z_1 + \hbar\left(x_1 y_2 - x_2
    y_1\right)\right). 
\end{aligned}
\end{equation*}
\item The Gaussian fundamental class
  ${\omega_\Lambda\in\Omega^2\big(\BR^2\big)}$ of the $xy$-plane is 
  given by 
\begin{equation*}
\omega_\Lambda \,=\, \frac{\Lambda}{2\pi}\,
\e{\!-\Lambda (x^2 +
      y^2)/2}\,dx\^dy\,,\qquad\qquad \Lambda > 0\,.
\end{equation*}
\item The transgression form
  ${\alpha_\Lambda\in\Omega^1\big(\BR^2\big)}$ satisfies
  ${\partial\omega/\partial\Lambda = d\alpha_\Lambda}$, where
\begin{equation*}
\alpha_\Lambda \,=\, \frac{1}{4\pi}\,\e{\!-\Lambda (x^2 +
      y^2)/2}\left(x\,dy - y\,dx\right).
\end{equation*}
\item The planar retraction ${\varrho_+\!:\BR^3_+\to\BR^2}$ is defined
  on the upper half-space $\BR^3_+$ by 
\begin{equation*}
\varrho_+(x,y,z) \,=\,
\left(\frac{x}{\sqrt{z}},\, \frac{y}{\sqrt{z}}\right),\qquad\qquad z>0\,.
\end{equation*}
\item The heat form ${\chi_\Lambda\in\Omega^2\big(\BR^3-\{0\}\big)}$
  is the pullback 
\begin{equation*}
\chi_\Lambda \,=\, 
\begin{cases}
\qquad\varrho_+^*\omega_\Lambda,\qquad &z>0\,,\\
\qquad 0\,,\qquad &z \le 0\,.
\end{cases}
\end{equation*}
Explicitly,
\begin{equation*}
\varrho_+^*\omega_\Lambda \,=\, \frac{\Lambda}{2\pi z} \e{\!-\Lambda
  (x^2 + y^2)/2 z} \left[dx\^dy \,+\,  \ha \left(x \,dy -
    y\,dx\right)\!\^\frac{dz}{z}\right],\qquad z>0\,.
\end{equation*}
\end{itemize}

\section{Fundamental lemma}\label{Fundamental}

We first demonstrate that the value of the
Legendrian self-linking integral $\slk_\kappa(C)$ does not depend upon
the parameter ${\Lambda 
  > 0}$ which sets the width of the Gaussian in the heat form $\chi_\Lambda$.
\begin{lem}[Fundamental Lemma]\label{FundLM}  
The limit which defines the Legendrian self-linking integral
$\slk_\kappa(C)$ exists, 
\begin{equation}\label{SlkkapC}
\slk_\kappa(C) \,=\,  \underset{\varepsilon\to 0}{\lim}\int_{T^2 -
  \Delta(\varepsilon)} \left(X \times
  X\right)^*\!\widehat\Gamma^*_\hbar\,\chi_\Lambda\,,\qquad\quad\hbar\,\in\,[0,1]\,,\qquad\quad
\Lambda>0\,, 
\end{equation}
and the value of ${\slk_\kappa(C)\in\BR}$ is independent of the parameter
$\Lambda$.
\end{lem}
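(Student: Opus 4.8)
The plan is to prove the two assertions — existence of the $\varepsilon\to 0$ limit and $\Lambda$-independence — in one stroke, by analyzing the $\Lambda$-derivative of the regulated integral and showing the resulting boundary term decays. Fix $\Lambda_0 < \Lambda_1$ and $\varepsilon > 0$, and write $I_\varepsilon(\Lambda) = \int_{T^2 - \Delta(\varepsilon)} (X\times X)^*\widehat\Gamma_\hbar^*\chi_\Lambda$. First I would use the transgression identity \eqref{DelOm}: since $\varrho_+^*$ commutes with $d$ and with $\partial/\partial\Lambda$, we get $\partial\chi_\Lambda/\partial\Lambda = d\big(\varrho_+^*\alpha_\Lambda\big)$ on $\BR^3_+$ (and both sides vanish for $z\le 0$, so the identity holds on all of $\BR^3-\{0\}$ provided one checks the extension by zero is $C^1$ across $z=0$ away from the origin — this follows from the heat-kernel-type decay noted after \eqref{PullChi}). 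Pulling back by $(X\times X)\circ\widehat\Gamma_\hbar$ and applying Stokes' Theorem on the cylinder $T^2-\Delta(\varepsilon)$, whose boundary is $S^1_+ \sqcup S^1_-$ with the orientation conventions fixed in the Notation section,
\begin{equation*}
\frac{\partial}{\partial\Lambda} I_\varepsilon(\Lambda) \,=\, \int_{S^1_+} \beta_\Lambda \,-\, \int_{S^1_-} \beta_\Lambda\,, \qquad \beta_\Lambda \,=\, (X\times X)^*\widehat\Gamma_\hbar^*\,\varrho_+^*\alpha_\Lambda\,,
\end{equation*}
where $S^1_\pm$ is the circle $\theta_2 = \theta_1 \pm \varepsilon$.

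The heart of the argument is then to show $\int_{S^1_\pm}\beta_\Lambda \to 0$ as $\varepsilon\to 0$, uniformly for $\Lambda\in[\Lambda_0,\Lambda_1]$. Parametrize $S^1_\pm$ by $\theta_1 = \phi$, $\theta_2 = \phi\pm\varepsilon$, $\phi\in S^1$. The key point — and this is where the Legendrian condition enters decisively — is to estimate the first two slots of $\widehat\Gamma_\hbar(X(\phi), X(\phi\pm\varepsilon))$, namely $\big(x(\phi\pm\varepsilon)-x(\phi),\,y(\phi\pm\varepsilon)-y(\phi)\big) = O(\varepsilon)$, against the third slot $z(\phi\pm\varepsilon)-z(\phi)+\hbar(\cdots)$. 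Using the Legendrian relation \eqref{LegC} to expand $z(\phi\pm\varepsilon)-z(\phi)$ in a Taylor series and combining with the $\hbar$-term, one finds that the $z$-component of $\widehat\Gamma_\hbar$ along $S^1_\pm$ is itself $O(\varepsilon^2)$ at generic $\phi$ but — crucially — is bounded \emph{below} in absolute value by a constant times $\varepsilon^2$ away from points where $\dot{x}=\dot{y}=0$ vanish to higher order; more robustly, what one needs is that the argument $(x^2+y^2)/z$ inside the Gaussian $\exp[-\Lambda(x^2+y^2)/2z]$ in $\varrho_+^*\alpha_\Lambda$ (see \eqref{TransA}, \eqref{PullChi}) stays bounded, so that the prefactor $\tfrac{1}{4\pi}\exp[-\Lambda(x^2+y^2)/2z](x\,dy-y\,dx)/z^{?}$ — more precisely $\varrho_+^*\alpha_\Lambda = \tfrac{1}{4\pi}\exp[-\Lambda(x^2+y^2)/2z](x\,dy-y\,dx)/z$ after working out the pullback — has numerator $x\,dy-y\,dx = O(\varepsilon)\cdot O(\varepsilon)\,d\phi = O(\varepsilon^2)\,d\phi$ while the denominator $z = z(\phi\pm\varepsilon)-z(\phi)+\hbar(x_1y_2-x_2y_1)$ is $O(\varepsilon^2)$. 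Thus the integrand on $S^1_\pm$ is $O(1)$ pointwise; to get it to \emph{vanish} one must extract an extra power of $\varepsilon$. The mechanism for this is exactly the Gaussian: whenever the projected points are well-separated the exponential kills everything, and near the diagonal the Legendrian expansion shows $(x^2+y^2)/z \to$ a finite nonzero value, forcing $\exp[-\Lambda(\cdots)]$ to be bounded away from $1$, while simultaneously the $1$-form $x\,dy-y\,dx$ restricted to $S^1_\pm$ acquires a genuine extra $\varepsilon$ because its leading $O(\varepsilon^2)$ term is a total derivative in $\phi$ (an exact form on the circle, integrating to zero) and the remainder is $O(\varepsilon^3)$.

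I expect the main obstacle to be precisely this last estimate: showing that the leading-order piece of $\int_{S^1_\pm}\beta_\Lambda$ integrates to zero around the circle, so that the true size is $O(\varepsilon)\to 0$ rather than $O(1)$. This is the analytic analogue of the statement, emphasized in the introduction, that for the \emph{original} angular form $\psi$ the boundary term survives the limit (producing the anomaly \eqref{VarSLK}), whereas the milder singularity of $\chi_\Lambda$ — it is a heat form, vanishing to infinite order as $z\to 0^+$ with $x^2+y^2$ fixed — causes the boundary term to die. Concretely I would Taylor-expand $X(\phi\pm\varepsilon)$ to third order, substitute into $\widehat\Gamma_\hbar$, use \eqref{LegC} and its derivative to eliminate $\dot z, \ddot z$, and identify the $\varepsilon^2$-coefficient of $\beta_\Lambda|_{S^1_\pm}$ as $\tfrac{d}{d\phi}(\text{something})\,d\phi$ plus manifestly $O(\varepsilon)$ terms; a short compactness argument (the curve $C$ is compact, $\varrho_+$ is smooth where defined, and the arguments stay in a compact set by the Legendrian bound) then upgrades pointwise smallness to uniform smallness. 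Granting the estimate, integrating $\partial_\Lambda I_\varepsilon$ from $\Lambda_0$ to $\Lambda_1$ gives $I_\varepsilon(\Lambda_1) - I_\varepsilon(\Lambda_0) = \int_{\Lambda_0}^{\Lambda_1}\big(\int_{S^1_+}\beta_\Lambda - \int_{S^1_-}\beta_\Lambda\big)\,d\Lambda \to 0$ as $\varepsilon\to 0$; combined with the classical fact \cite{Calugareanu:59,Pohl:67} that $\lim_{\varepsilon\to0} I_\varepsilon(\Lambda)$ exists for each fixed $\Lambda$ (the diagonal singularity of any such Gauss-type integrand is integrable, and is if anything milder here), this yields both the existence of $\slk_\kappa(C)$ and its independence of $\Lambda$, completing the proof of the Lemma.
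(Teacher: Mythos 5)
Your overall skeleton is the same as the paper's: differentiate in $\Lambda$, use the transgression $\partial\omega_\Lambda/\partial\Lambda=d\alpha_\Lambda$, apply Stokes on the cylinder, and show the boundary integrals over $S^1_\pm$ die as $\varepsilon\to 0$. However, the key estimate is carried out incorrectly, and the place where the real difficulty lives is missed. Your power counting for the third slot of $\widehat\Gamma_\hbar$ is wrong: the Legendrian expansion gives $\widehat\Delta z=-\eta(1-\hbar)(\gamma\times\dot\gamma)+\CO(\varepsilon^2)$ for $\hbar\neq 1$ (so $\CO(\varepsilon)$, not $\CO(\varepsilon^2)$), and $\widehat\Delta z=-\tfrac16\eta^3(\dot\gamma\times\ddot\gamma)+\CO(\varepsilon^4)$ for $\hbar=1$ (so $\CO(\varepsilon^3)$); see \eqref{DelzOII}--\eqref{DelzIII}. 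Consequently the mechanism you propose — the Gaussian argument tending to a finite nonzero value plus an exactness cancellation of the leading $\CO(\varepsilon^2)$ piece of $x\,dy-y\,dx$ — is not what happens in either case. For $\hbar\neq 1$ the Gaussian argument is $\CO(\varepsilon)$, so the exponential tends to $1$; the boundary integrand is already $\CO(\varepsilon)$ pointwise because of the explicit prefactor in \eqref{PullChiDIV}, and no total-derivative cancellation is needed. For $\hbar=1$ the Gaussian argument diverges like $1/|\eta|$ and the integrand is exponentially small in $1/\varepsilon$.

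The genuine obstacle, which your argument does not address, is that for $\hbar\neq 1$ the pointwise $\CO(\varepsilon)$ bound degenerates at the zeros of $\gamma\times\dot\gamma=-\dot z$, i.e.\ at the critical points of the height function, which every closed Legendrian knot possesses (at least its highest and lowest points). There the boundary integrand behaves like $\tfrac{\varepsilon}{f(\phi)}\exp[-\varepsilon/f(\phi)]$ with $f$ vanishing linearly, and the limit $\varepsilon\to 0$ of the $\phi$-integral is not obviously zero; this is precisely where the paper invokes the Morse assumption on $z(\theta)$ and proves the calculus Lemma~\ref{CalcLM} (the integral is $\CO(\varepsilon|\ln\varepsilon|)$). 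You would also need to track the positivity locus $\widehat\Delta z\ge 0$: Stokes must be applied on $[T^2-\Delta(\varepsilon)]_+$, whose boundary includes the set $\widehat\Delta z=0$, on which the transgression form happily vanishes. Finally, the existence of the $\varepsilon\to 0$ limit should not be delegated to C\u{a}lug\u{a}reanu--Pohl, whose integrability statement concerns the angular form $\psi$ with the abelian difference; here the denominator $\widehat\Delta z\sim\eta^3$ at $\hbar=1$ is \emph{more} singular, and integrability follows instead from the same local expansions, which show the pulled-back heat form stays bounded as $\eta\to 0$.
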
\noindent
Precisely at the special value ${\hbar=1}$, the self-linking integrand
is Heisenberg-invariant.  For this reason, the analysis to prove Lemma \ref{FundLM} will differ slightly depending on whether ${\hbar \neq
  1}$ or ${\hbar = 1}$. 

Throughout, we make two extra topological assumptions about the
Legendrian knot $C$.  Both assumptions hold generically and help to
simplify the proofs.
\begin{enumerate}
\item[1.] The Lagrangian projection $\Pi(C)$ is an immersed plane
  curve $\gamma$ with only double-point singularities, as in
  Figure \ref{LgTref}.\smallskip
\item[2.] The height function $z(\theta)$ on ${C \subset \BR^3}$ is
  Morse, with isolated non-degenerate critical points.  That is,
  $\dot{z}$ vanishes only at isolated points ${p\in C}$, at which
  ${\ddot{z} \neq 0}$.  Because $C$ is Legendrian, the Morse condition
  on $z(\theta)$ is equivalent by \eqref{LegC} to the condition that
  the function ${\gamma\times\dot\gamma}$ vanish only at isolated
  points ${\theta\in S^1}$, at which ${\gamma\times\ddot{\gamma}\neq 0}$. 
\end{enumerate}
The first assumption is standard.  Otherwise, the Morse
condition is used only in the case ${\hbar \neq 1}$ and could possibly
be relaxed, at the cost of further effort.

Before embarking on the proof of our Fundamental Lemma, let us mention an
easy corollary which is handy for the gauge theory analysis in \S $5$
of \cite{Beasley:2015}.   To state the corollary, we require
additional notation.  Let ${t > 0}$ be a positive scaling parameter.
As in the remarks following Proposition \ref{HLink}, we consider a
version of the Heisenberg difference $\widehat\Gamma_{t\hbar}$ with
rescaled Planck constant ${t\hbar}$ for fixed ${\hbar\in[0,1]}$,
\begin{equation}\label{HatGam}
\widehat\Gamma_{t\hbar}(X_1,X_2) \,=\, \left(x_2-x_1,\,y_2-y_1,\,z_2 - z_1 +
  t\hbar\left(x_1 y_2 - x_2 y_1\right)\right).
\end{equation}
For each value of $t$, we associate the contact form 
\begin{equation}\label{Kapt}
\kappa_t \,=\, t^{-1/2}\,dz \,+\, t^{1/2} \left(x \, dy - y \,
  dx\right).
\end{equation}
The relative power of $t$ between the
two terms in \eqref{Kapt} ensures that the contact form is
left-invariant under the Heisenberg multiplication with ${\hbar=t}$ in
\eqref{Mut}.  The overall power of $t$
ensures that the contact condition ${\kappa_t\^d\kappa_t =
  2\,dx\^dy\^dz \neq 0}$ is satisfied trivially for all values ${t > 0}$.

Finally, suppose that ${C\subset \BR^3}$ is a Legendrian knot with
respect to the standard contact form $\kappa_{t=1}$.  Just as we
consider the family of isotopic contact forms in \eqref{Kapt}, we
would like to consider a family of isotopic knots ${C_t \subset
  \BR^3}$, each of which is Legendrian with respect to $\kappa_t$ for
the given value of $t$.  Such a family of knots is determined
if we simply require the Lagrangian projection of $C_t$ to coincide
with that of $C$,
\begin{equation}
\Pi(C_t) \,=\, \Pi(C)\,,\qquad\qquad t\,>\,0\,.
\end{equation}
Either by integrating the contact condition as in \eqref{ZTheta} or just by
scaling, the embedding map ${X_t\!:S^1\to\BR^3}$ for $C_t$ is then
related to the original embedding $X$ for $C$ via 
\begin{equation}\label{BigXt}
X_t(\theta) \,\equiv\left(x_t(\theta),\, y_t(\theta),\,
  z_t(\theta)\right) = 
\left(x(\theta),\,y(\theta),\, t\,z(\theta)\right).
\end{equation}
In particular, the abelian limit ${t\to 0}$ of the Heisenberg
structure corresponds to a limit in which $C_t$ flattens to a
curve in the $xy$-plane, and 
the Lagrangian projection $\Pi(C)$ is realized geometrically.

Given the family of curves $C_t$, we consider the {\sl three}-parameter
self-linking integral  
\begin{equation}\label{SlKtwo}
\slk_\kappa(C_t) \,=\,  \underset{\varepsilon\to 0}{\lim}\int_{T^2 -
  \Delta(\varepsilon)} \left(X^{}_t \times
  X^{}_t\right)^*\!\widehat\Gamma_{t\hbar}^*\,\chi_\Lambda\,,\qquad\quad
\hbar\in[0,1]\,,\qquad\quad t,\Lambda>0\,. 
\end{equation}
Precisely for ${\hbar=1}$, the self-linking integrand is
invariant under the Heisenberg symmetry with multiplication map $\mu_t$.

\begin{remark}[Scaling Identity]  For all ${t,\Lambda>0}$,
\begin{equation}\label{SlKtwos}
\left(X^{}_t \times
  X^{}_t\right)^*\!\widehat\Gamma_{t\hbar}^*\,\chi_\Lambda \,=\,
\left(X\times X\right)^*\!\widehat\Gamma_\hbar^*\,\chi_{\Lambda/t}\,.
\end{equation}
By the Scaling Identity, the behavior of the self-linking integrand  in
limit ${\Lambda\to\infty}$ with fixed $t$ is the same as the behavior
in the limit ${t\to 0}$ with fixed $\Lambda$.  Because
$C_t$ flattens to a plane curve in the latter limit, 
$\Lambda$ plays a similar role to the parameter of the same name in
\eqref{FLambda}.
\end{remark}
\noindent
\textbf{Proof of Scaling Identity}
\vskip2pt
\noindent
The $t$-dependence of the pullback is $\left(X^{}_t \times
  X^{}_t\right)^*\!\widehat\Gamma_{t\hbar}^*\,\chi_\Lambda$ is
actually very simple.  In terms of the finite differences
\begin{equation}
\Delta x \,=\, x_2-x_1\,,\qquad\quad \Delta y \,=\,
y_2-y_1\,,\qquad\quad \widehat\Delta z \,=\, z_2 - z_1 + \hbar\left(x_1 y_2 - x_2 y_1\right),
\end{equation}
the formula for the heat form $\chi_\Lambda$ in \eqref{PullChi} implies
\begin{equation}\label{PullChiD}
\begin{aligned}
&\left(X^{}_t \times
  X^{}_t\right)^*\!\widehat\Gamma_{t\hbar}^*\,\chi_\Lambda \,=\,\\
&\qquad\frac{\Lambda}{2\pi t \widehat\Delta z} \,\e{\!-\Lambda
  (\Delta x^2 + \Delta y^2)/2 t \widehat\Delta z} \left[d\Delta
  x\^d\Delta y \,+ \ha \left(\Delta x \,d\Delta y -
    \Delta y\,d\Delta x\right)\!\^\frac{d\widehat\Delta
    z}{\widehat\Delta z}\right],
\end{aligned}
\end{equation}
provided ${\widehat\Delta z > 0}$.  Otherwise, the pullback of
$\chi_\Lambda$ vanishes.  Evidently, $t$ just multiplies $\widehat\Delta
z$ in \eqref{PullChiD}, and all dependence on $t$ can be absorbed by
rescaling the Gaussian parameter $\Lambda$.\qquad$\square$
\begin{cor}
The value of $\slk_\kappa(C_t)$ in \eqref{SlKtwo} is independent of both $t$ and $\Lambda$.
\end{cor}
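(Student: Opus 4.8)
The plan is to deduce the Corollary directly from the Scaling Identity in \eqref{SlKtwos} together with the Fundamental Lemma (Lemma \ref{FundLM}), without repeating any hard analysis. First I would invoke the Scaling Identity, which asserts the pointwise equality of two-forms on $T^2-\Delta(\varepsilon)$,
\begin{equation}
\left(X^{}_t \times X^{}_t\right)^*\!\widehat\Gamma_{t\hbar}^*\,\chi_\Lambda \,=\, \left(X\times X\right)^*\!\widehat\Gamma_\hbar^*\,\chi_{\Lambda/t}\,,
\end{equation}
valid for every $t,\Lambda>0$. Integrating both sides over $T^2-\Delta(\varepsilon)$ and taking the limit $\varepsilon\to 0$, the left side is by definition $\slk_\kappa(C_t)$ as in \eqref{SlKtwo}, while the right side is $\slk_\kappa(C)$ computed with Gaussian parameter $\Lambda/t$ as in \eqref{SlkkapC}. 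The Fundamental Lemma guarantees that the limit on the right exists and that its value does not depend on the width parameter; in particular the value with parameter $\Lambda/t$ equals the value with any other parameter, say $\Lambda$ itself. Hence
\begin{equation}
\slk_\kappa(C_t) \,=\, \slk_\kappa(C)\big|_{\Lambda/t} \,=\, \slk_\kappa(C)\,,
\end{equation}
so the right-hand side is independent of both $t$ and $\Lambda$, and therefore so is $\slk_\kappa(C_t)$.

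The only subtlety worth spelling out is that the Scaling Identity is an identity of integrands on the \emph{cut} torus $T^2-\Delta(\varepsilon)$ for each fixed $\varepsilon$, so that passing to the limit commutes trivially with the substitution $\Lambda\mapsto\Lambda/t$: the excised neighborhood $\Delta(\varepsilon)$ of the diagonal is the same region on both sides, and no $t$-dependent rescaling of the domain of integration is involved. Thus there is no interchange-of-limits issue to resolve here. I would also note in passing that the case $\hbar=1$ requires no separate treatment, since the Scaling Identity holds for all $\hbar\in[0,1]$ uniformly; the distinction between $\hbar=1$ and $\hbar\neq 1$ enters only through the value of $\slk_\kappa(C)$ given by the Main Theorem, not through the reduction performed here.

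There is essentially no obstacle: the entire content has been front-loaded into the Fundamental Lemma (the vanishing of the anomalous boundary term that would otherwise spoil $\Lambda$-independence) and into the elementary algebraic manipulation \eqref{PullChiD}--\eqref{SlKtwos} showing that $t$ enters the heat form only through the product $t\,\widehat\Delta z$. The Corollary is then a two-line formal consequence, and I would present it as such.
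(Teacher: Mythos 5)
Your proposal is correct and follows the paper's own argument exactly: the paper likewise deduces the Corollary immediately from the Scaling Identity \eqref{SlKtwos} and Lemma \ref{FundLM}, with your version merely spelling out the (unproblematic) commutation of the substitution $\Lambda\mapsto\Lambda/t$ with the $\varepsilon\to 0$ limit.
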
\noindent
\textbf{Proof}
The corollary follows immediately from Lemma \ref{FundLM} and the
Scaling Identity in \eqref{SlKtwos}.\qquad$\square$

\subsection{Analysis near the diagonal}\label{LocalD}

The non-trivial aspect of our work concerns the local analysis of the
self-linking integrand in the vicinity of the diagonal ${\Delta
  \subset T^2}$.  In practice, this analysis amounts to the Taylor
expansion of expressions such as occur in \eqref{PullChiD}.  Rather than
scatter such expansions willy-nilly throughout the paper, we collect here
the basic ingredients to be used again and again.

Let $(\theta_1,\theta_2)$ be angular coordinates on $T^2$.  To
parametrize the tubular neighborhood ${\Delta(\varepsilon)\subset T^2}$ of the
diagonal, we set 
\begin{equation}\label{EtaM}
\theta_1 \,=\, \phi\,,\qquad\qquad \theta_2 \,=\, \phi+\eta\,.
\end{equation}
Here $\phi$ is an angular coordinate along the diagonal, and
$\Delta(\varepsilon)$ is the subset where ${|\eta|<\varepsilon}$.  Our
local expansions near the diagonal will then be Taylor expansions
in $\eta$, appropriate for the regime ${\varepsilon\ll 1}$.
We must be careful about orientations.  In terms of the
coordinates $(\phi,\eta)$, the orientation form on 
$T^2$ is given by ${d\theta_1\^d\theta_2 = d\phi\^d\eta}$.
Topologically, the configuration space 
${T^2-\Delta(\varepsilon)}$ is a cylinder with oriented boundary
circles 
\begin{equation}
S^1_\pm:\quad \eta \,=\, \pm\varepsilon \mod 2\pi\,.
\end{equation}
As shown in Figure \ref{Cylinder}, the boundary orientation of
$S^1_+$ is positive with respect to the direction of increasing $\phi$ and
the orientation of $S^1_{-}$ is negative, so
\begin{equation}
\partial\!\left(T^2-\Delta(\varepsilon)\right) \,=\, S^1_+ \,-\, S^1_-\,.
\end{equation}

\begin{figure}[t]
\begin{center}
\includegraphics[scale=0.40]{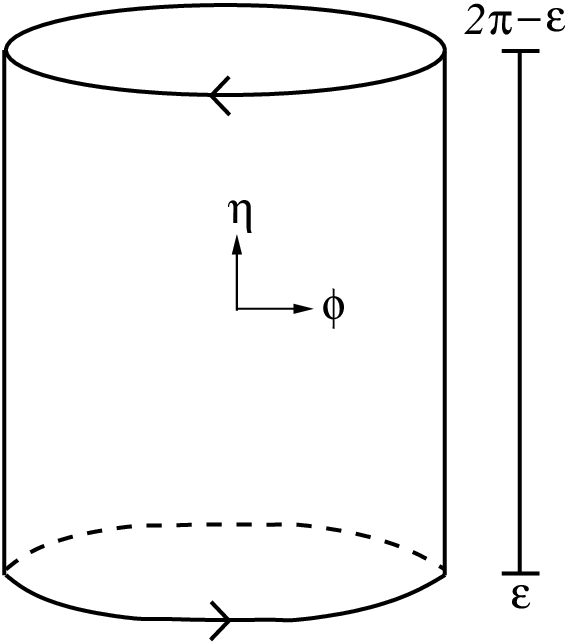}
\caption{Orientations on the cylinder ${T^2 -
    \Delta(\varepsilon)}$.}\label{Cylinder}
\end{center}
\end{figure}

We now expand the various terms appearing in the self-linking
integrand ${(X\times X)^*\widehat\Gamma_\hbar^*\,\chi_\Lambda}$, given by the
expression in \eqref{PullChiD} for ${t=1}$.  We start with 
\begin{equation}
\Delta x \,=\, x(\theta_2) - x(\theta_1) \,=\, x(\phi+\eta) - x(\phi) \,=\, \eta \, \dot{x}(\phi)
\,+\, \CO\big(\varepsilon^2\big)\,,
\end{equation} 
and similarly for $\Delta y$.  Hence 
\begin{equation}\label{Delrsq}
\Delta x^2 + \Delta y^2 \,=\, \eta^2 \left(\dot{x}^2 +
  \dot{y}^2\right) \,+\, \CO\big(\varepsilon^3\big) \,=\,
\eta^2\,||\dot\gamma||^2 
\,+\, \CO\big(\varepsilon^3\big)\,,
\end{equation}
where ${\gamma(\phi)=\left(x(\phi), y(\phi)\right)}$ is the
Lagrangian immersion ${\gamma = \Pi\circ X}$ as before.  
For the related two-form, we can write 
\begin{equation}
\begin{aligned}
d\Delta x\^d\Delta y \,&=\, \ha\,d\Delta\gamma\times
d\Delta\gamma\,,\qquad\qquad\qquad \Delta\gamma \equiv \left(\Delta x,
  \Delta y\right),\\
&=\, \ha \, d(\eta\,\dot{\gamma})\times d(\eta\,\dot{\gamma}) \,+\,
\CO\big(\varepsilon^3\big)\,,\\
&=\, \ha \left(\dot{\gamma}\,d\eta \,+\, \eta \,\ddot{\gamma}\,
  d\phi\right)\times \left(\dot{\gamma}\,d\eta \,+\, \eta \,\ddot{\gamma}\,
  d\phi\right) \,+\, \CO\big(\varepsilon^3\big).
\end{aligned}
\end{equation}
After collecting terms in the product,
\begin{equation}
d\Delta x\^d\Delta y \,=\, -\eta\,
\left(\dot{\gamma}\times\ddot{\gamma}\right) d\phi\^d\eta \,+\,
\CO\big(\varepsilon^2\big)\,d\phi\^d\eta\,.
\end{equation}

Recall the definition 
\begin{equation}
\widehat\Delta z \,=\, z_2 - z_1 + \hbar\left(x_1 y_2 - x_2 y_1\right).
\end{equation}
The Legendrian condition on $C$ is absolutely critical for our results,
because it implies that the local behavior of $\widehat\Delta z$ near the
diagonal is controlled by the geometry of the Lagrangian immersion $\gamma$.
Moreover, the quadratic terms in $\widehat\Delta z$ are required by
the Heisenberg symmetry when ${\hbar=1}$.

The expansion of the abelian difference ${\Delta z = z_2 - z_1}$ is
fixed by the Legendrian condition
\begin{equation}\label{LegCII}
\dot{z} \,=\, y \, \dot{x} \,-\, x \, \dot{y} \,=\, -\gamma\times
\dot{\gamma}\,.
\end{equation}
Thus
\begin{equation}\label{Delz}
\begin{aligned}
z_2 - z_1 \,&=\, z(\phi+\eta) - z(\phi) \,=\, \eta \,
\dot{z}(\phi) \,+\, \ha\,\eta^2\,\ddot{z}(\phi) \,+\,
\frac{1}{6}\,\eta^3\,\dddot{z}(\phi) \,+\,
\CO\big(\varepsilon^4\big)\,,\\
&=\, -\eta \left(\gamma\times\dot{\gamma}\right) \,-\,
\ha\,\eta^2\left(\gamma\times\ddot{\gamma}\right) \,-\,
\frac{1}{6}\,\eta^3\left(\dot\gamma\times\ddot{\gamma} \,+\,
  \gamma\times\dddot{\gamma}\right) \,+\, \CO\big(\varepsilon^4\big)\,.
\end{aligned}
\end{equation}
In passing to the second line of \eqref{Delz}, we repeatedly
differentiate the Legendrian condition on $\dot{z}$ in \eqref{LegCII}.

The attentive reader may wonder why we have expanded ${\Delta z=z_2 - z_1}$ all
the way to cubic order in $\eta$.  The question answers itself once we
expand the remaining quadratic terms in the Heisenberg difference
$\widehat\Delta z$,
\begin{equation}\label{DelzII}
\begin{aligned}
x_1 y_2 - x_2 y_1 \,&=\,
\gamma(\phi)\times\gamma(\phi+\eta)\,,\\
&=\, \eta \left(\gamma\times\dot{\gamma}\right) \,+\,
\ha\,\eta^2\left(\gamma\times\ddot{\gamma}\right) \,+\,
\frac{1}{6}\,\eta^3\left(\gamma\times\dddot{\gamma}\right) \,+\,
\CO\big(\varepsilon^4\big)\,.
\end{aligned}
\end{equation}
So long as ${\hbar\neq 1}$, the expansion of $\widehat\Delta z$ begins
at linear order in $\eta$, 
\begin{equation}\label{DelzOII}
\widehat\Delta z \,\buildrel{\hbar\neq 1}\over=\,
-\eta\left(1-\hbar\right)\left(\gamma\times\dot{\gamma}\right) \,+\,
\CO\big(\varepsilon^2\big)\,,
\end{equation}
as one naively expects.  But precisely at ${\hbar=1}$, cancellations
occur in the sum of \eqref{Delz} and \eqref{DelzII}, and the leading term in the expansion of
$\widehat\Delta z$ near the diagonal begins at cubic order,
\begin{equation}\label{DelzIII}
\widehat\Delta z \,\buildrel{\hbar=1}\over=\, -\frac{1}{6}\eta^3
\left(\dot{\gamma}\times\ddot{\gamma}\right) +\,
\CO\big(\varepsilon^4\big)\,.
\end{equation}
The cancellation in \eqref{DelzIII} is forced by the Heisenberg
symmetry at ${\hbar=1}$.  Clearly, the quantity
${\gamma\times\dot{\gamma}}$ in \eqref{DelzOII} is not invariant under
translations ${\gamma\mapsto\gamma+\gamma_0}$ for constant 
${\gamma_0\in\BR^2}$.  Upon projection to the $xy$-plane, such
translations are generated by the Heisenberg action, so
${\gamma\times\dot{\gamma}}$ is forbidden to appear at ${\hbar=1}$. 

Combining the expansions in \eqref{Delrsq}, \eqref{DelzOII}, and
\eqref{DelzIII}, we see that the argument of the heat kernel is given
in the neighborhood $\Delta(\varepsilon)$ by 
\begin{equation}\label{HeatArg}
\frac{\Delta x^2 + \Delta y^2}{2\widehat\Delta z} \,=\,\Bigg\{\quad
\begin{aligned}
&-\frac{||\dot{\gamma}||^2\,\eta}{2
  \left(1-\hbar\right)\left(\gamma\times\dot{\gamma}\right)} \,+\,
\CO\big(\varepsilon^2\big)\,,\qquad &\hbox{\small$[\hbar\neq 1]$}\\[1 ex]
&-\frac{3\,
  ||\dot{\gamma}||^2}{\left(\dot{\gamma}\times\ddot{\gamma}\right)\eta}
\,+\, \CO(1)\,. &\hbox{\small$[\hbar=1]$}
\end{aligned}
\end{equation}
For generic ${\hbar\neq 1}$, the argument of the heat kernel in
\eqref{HeatArg} vanishes linearly near the diagonal.  However,
at the symmetric point ${\hbar=1}$, the argument instead diverges
as ${\eta\to 0}$.  In Section \ref{MainThm}, this difference will 
ultimately lead to the discontinuity in the value of 
$\slk_\kappa(C)$ at ${\hbar = 1}$.

\begin{remark}[Local Positivity]
The pullback of the heat form $\chi_\Lambda$ vanishes identically
unless ${\widehat\Delta z > 0}$.  On the neighborhood
$\Delta(\varepsilon)$, positivity of $\widehat\Delta z$ becomes
equivalent via \eqref{DelzOII} and \eqref{DelzIII} to the local sign condition 
 \begin{equation}\label{Posit}
\widehat\Delta z\big|_{\Delta(\varepsilon)} > 0 \quad\Longleftrightarrow\quad\Bigg\{
\begin{aligned}
&\eta\left(1-\hbar\right)\left(\gamma\times\dot{\gamma}\right) <
0\,,\qquad &\hbox{\small$[\hbar\neq 1]$}\,,\\
&\eta\left(\dot{\gamma}\times\ddot{\gamma}\right) < 0\,.
&\hbox{\small$[\hbar=1]$}\,.
\end{aligned} 
\end{equation}
Again, the nature of the positivity condition depends upon
whether or not ${\hbar=1}$.  For generic values of $\hbar$, the sign
of $\eta$ is determined by the sign of $\gamma\times\dot{\gamma}$
and hence the sign of the derivative $\dot{z}$.  For
${\hbar=1}$, the sign of $\eta$ is instead fixed by the sign of
$\dot{\gamma}\times\ddot{\gamma}$, proportional to the plane
curvature of $\Pi(C)$.  

\begin{figure}[th]
$$\begin{matrix}
&\includegraphics[scale=0.60]{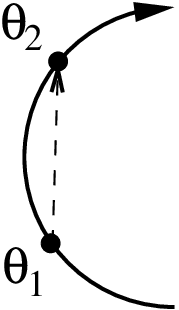}\quad\qquad & \quad\qquad 
&\includegraphics[scale=0.60]{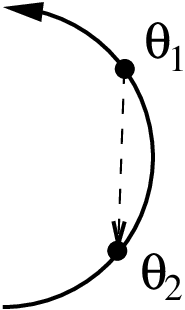}\\[1 ex]
&(a.)\quad\eta>0\,,\,\dot{\gamma}\times\ddot{\gamma}<0 \quad\qquad & \quad\qquad &(b.)\quad\eta<0\,,\,\dot{\gamma}\times\ddot{\gamma}>0
\end{matrix}$$
\caption{Local positivity condition ${\widehat\Delta z>0}$ for ${\hbar=1}$.}\label{Positv}
\end{figure} 

Since the local positivity condition depends upon the
sign of $\eta$, the self-linking integrand always vanishes in the 
neighborhood of one or the other of the boundary circles $S^1_\pm$ in Figure
\ref{Cylinder}, and the geometry of $C$ at any given point determines on
which boundary circle the integrand vanishes.  See Figure \ref{Positv}
for a geometric illustration of the local
positivity condition ${\widehat\Delta z > 0}$ in the symmetric case
${\hbar=1}$.  For clarity, we exaggerate the small separation between
the points $\gamma(\theta_1)$ and $\gamma(\theta_2)$ in the figure.
In both cases, the positivity condition is sensitive to the
orientation of $C$, as a reversal of orientation flips the signs
of $\gamma\times\dot{\gamma}$ and
$\dot{\gamma}\times\ddot{\gamma}$ in \eqref{Posit}.
\end{remark}

Let us complete the expansion for small $\eta$ of the self-linking
integrand.  The second bracketed term in \eqref{PullChiD} involves the angular
one-form 
\begin{equation}
\begin{aligned}
\Delta x \, d\Delta y \,-\, \Delta y \, d\Delta x \,&=\, \Delta\gamma
\times d\Delta\gamma\,,\\
&=\, \left(\eta\,\dot{\gamma}\right) \times
d\left(\eta\,\dot{\gamma}\right) + \CO\big(\varepsilon^3\big)\,,\\
&=\, \eta^2 \left(\dot{\gamma}\times\ddot{\gamma}\right) d\phi
\,+\, \CO\big(\varepsilon^3\big)\,.
\end{aligned}
\end{equation}
From the expansions of ${\widehat\Delta z}$ in \eqref{DelzOII} and
\eqref{DelzIII}, 
\begin{equation}
\left(\Delta x \, d\Delta y - \Delta y \, d\Delta
  x\right)\!\^d\widehat\Delta z=\Bigg\{
\begin{aligned}
&-\left(1-\hbar\right)\left(\gamma\times\dot{\gamma}\right)\left(\dot{\gamma}\times\ddot{\gamma}\right)
\eta^2\, d\phi\^d\eta\,,\quad&\hbox{\small$[\hbar\neq
1]$}\\
&-\ha\left(\dot{\gamma}\times\ddot{\gamma}\right)^2 \eta^4\, d\phi\^d\eta\,, &\hbox{\small$[\hbar=1]$}
\end{aligned}
\end{equation}
to leading order, so
\begin{equation}
\left(\Delta x \,d\Delta y -
    \Delta y\,d\Delta x\right)\!\^\frac{d\widehat\Delta
    z}{\widehat\Delta z}=\Bigg\{
\begin{aligned}
&\eta\left(\dot{\gamma}\times\ddot{\gamma}\right) d\phi\^d\eta\,+\, \CO\big(\varepsilon^2\big)\, d\phi\^d\eta\,,\qquad
&\hbox{\small$[\hbar\neq 1]$}\\
&3\,\eta\left(\dot{\gamma}\times\ddot{\gamma}\right) d\phi\^d\eta\,+\, \CO\big(\varepsilon^2\big)\, d\phi\^d\eta\,. &\hbox{\small$[\hbar=1]$}
\end{aligned}
\end{equation}

After a bit of algebra, one finds that the pullback
\eqref{PullChiD} of $\chi_\Lambda$ behaves near the diagonal ${\Delta
  \subset T^2}$ for ${\hbar\neq 1}$ as  
\begin{equation}\label{PullChiDIIO}
\begin{aligned}
&\left(X \times
  X\right)^*\!\widehat\Gamma_{\hbar}^*\,\chi_\Lambda\Big|_{\Delta(\varepsilon)}\buildrel{\hbar\neq
  1}\over=\\
&\qquad-\sgn(\eta)\,\frac{\Lambda\left(\dot{\gamma}\times\ddot{\gamma}\right)}{4\pi\,
  |1-\hbar|\,
  |\gamma\times\dot{\gamma}|}\,\exp{\!\left[-\frac{\Lambda\,
      ||\dot{\gamma}||^2\,|\eta|}{2\,|1-\hbar|\,
      |\gamma\times\dot{\gamma}|}\right]}\,d\phi\^d\eta\,+\, \cdots\,,
\end{aligned}
\end{equation}
assuming the sign condition
${\eta\left(1-\hbar\right)\left(\gamma\times\dot{\gamma}\right) < 0}$
in \eqref{Posit} holds.  Otherwise, the pullback is equal to zero.
The sign condition ensures that the exponential in \eqref{PullChiDIIO} is
always decaying, and it leads to a non-analytic dependence on the
sign of $\eta$.\footnote{As usual, ${\sgn(\eta)=+1}$ for ${\eta>0}$, and 
  ${\sgn(\eta)=-1}$ for ${\eta<0}$.}  The ellipses in
\eqref{PullChiDIIO} indicate subleading terms which vanish as ${\eta\to
  0}$.

By contrast, at the symmetric value ${\hbar=1}$,
\begin{equation}\label{PullChiDII}
\begin{aligned}
\left(X \times
  X\right)^*\!\widehat\Gamma^*_\hbar\,\chi_\Lambda\Big|_{\Delta(\varepsilon)}
\,&\buildrel{\hbar=1}\over=\,-\frac{3\,\Lambda}{2\pi
  \eta^2} \exp{\!\left[-\frac{3\,\Lambda\,||\dot{\gamma}||^2}{
      \left|\dot{\gamma}\times\ddot{\gamma}\right|}\frac{1}{|\eta|}\right]}\,d\phi\^d\eta
\,+\,\cdots\,,\\
&\,=\,  \frac{3\,\Lambda}{2\pi}
\exp{\!\left[-\frac{3\,\Lambda\,||\dot{\gamma}||^2}{\left|\dot{\gamma}\times\ddot{\gamma}\right|}
    \,|\nu|\right]}\,d\phi\^d\nu \,+\,\cdots\,,
\end{aligned}
\end{equation}
under the sign condition ${\eta
  \left(\dot{\gamma}\times\ddot{\gamma}\right)<0}$ in \eqref{Posit}.
In passing to the second line, we make the substitution ${\nu=1/\eta}$
for clarity.  In this case, the pullback of the heat form $\chi_\Lambda$ vanishes
exponentially as ${|\nu|\to\infty}$, or equivalently ${|\eta|\to 0}$.

\subsection{Proof of the fundamental lemma}\label{Proof}

The proof of our Fundamental Lemma \ref{FundLM} is now an
exercise in calculus.

As a brief formality, we first establish that the singularity in the
pullback of the heat form $\chi_\Lambda$ is integrable, 
so that the defining limit ${\varepsilon\to 0}$ in \eqref{SlkkapC} does
exist.  By assumption, ${||\dot{\gamma}||^2>0}$ is
everywhere non-vanishing, and the functions
$|\gamma\times\dot{\gamma}|$ and $|\dot{\gamma}\times\ddot{\gamma}|$
are bounded from above on $C$.  Integrability in the
region of small ${|\eta|<\varepsilon}$ follows immediately from the local
expressions in \eqref{PullChiDIIO} and \eqref{PullChiDII}, both of
which remain finite as ${\eta\to 0}$.

Otherwise, we must check that the value of $\slk_\kappa(C)$ does not
depend upon the parameter ${\Lambda > 0}$.  This argument will be
equally straightforward but the result is significant; the analogue for
the naive Gauss self-linking integral $\slk_0(C)$ is simply false.  
Our strategy will be to show that the derivative of $\slk_\kappa(C)$
with respect to $\Lambda$ vanishes for all values of ${\Lambda>0}$.
The details differ somewhat depending upon whether ${\hbar\neq
  1}$ or ${\hbar=1}$, but the main idea is the same in both cases.

We compute 
\begin{equation}\label{dslkL}
\begin{aligned}
\frac{d\slk_\kappa(C)}{d\Lambda} \,&=\, \underset{\varepsilon\to
  0}{\lim}\int_{T^2 - \Delta(\varepsilon)} \left(X \times
  X\right)^*\!\widehat\Gamma^*_\hbar\left(\frac{\partial\chi_\Lambda}{\partial\Lambda}\right)\,,\\
&=\,  \underset{\varepsilon\to
  0}{\lim}\int_{[T^2 - \Delta(\varepsilon)]_+} \left(X \times
  X\right)^*\!\widehat\Gamma^*_\hbar\,\varrho_+^*\!\left(\frac{\partial\omega_\Lambda}{\partial\Lambda}\right)\,.
\end{aligned}
\end{equation}
Here $[T^2 - \Delta(\varepsilon)]_+$ indicates the closed
subset of the cylinder where ${\widehat\Delta z \ge 0}$ is positive,
\begin{equation}\label{PosT}
[T^2 - \Delta(\varepsilon)]_+ \,=\, \Big\{
(\theta_1,\theta_2) \,\big|\, \widehat\Delta
z(\theta_1, \theta_2) \ge 0\Big\}\,,
\end{equation}
on which the pullback of the heat form $\chi_\Lambda$ is non-vanishing.
Of course, the positive subset in \eqref{PosT} depends upon the
Legendrian embedding $X$.  We omit this  dependence from the
notation as $X$ is fixed throughout.

By the calculation in \eqref{DelOm},
\begin{equation}
\frac{\partial\omega_\Lambda}{\partial\Lambda}\,=\,d\alpha_\Lambda\,, 
\end{equation}
for the
transgression one-form 
\begin{equation}
\alpha_\Lambda \,=\, \frac{1}{4\pi}\,\e{\!-\Lambda (x^2 +
      y^2)/2}\left(x\,dy - y\,dx\right) \,\in\, \Omega^1\big(\BR^2\big)\,.
\end{equation}
We apply the commutativity of the de Rham operator with
pullback, followed by Stokes' Theorem, to reduce the bulk integral in
the second line of \eqref{dslkL} to a boundary integral,
\begin{equation}\label{Stks}
\int_{[T^2 - \Delta(\varepsilon)]_+}\mskip-5mu \left(X \times
  X\right)^*\!\widehat\Gamma^*_\hbar\,\varrho_+^*\!\left(\frac{\partial\omega_\Lambda}{\partial\Lambda}\right)
\,=\, \int_{\partial[T^2 - \Delta(\varepsilon)]_+}\mskip-5mu \left(X \times
  X\right)^*\!\widehat\Gamma^*_\hbar\,\varrho_+^*\alpha_\Lambda\,.
\end{equation}
Explicitly, the boundary integrand in \eqref{Stks} is given, where
non-zero, by 
\begin{equation}\label{PullAlph}
 \left(X \times
  X\right)^*\!\widehat\Gamma^*_\hbar\,\varrho_+^*\alpha_\Lambda\,=\,
\frac{1}{4\pi\widehat\Delta z}\,\e{\!-\Lambda\left(\Delta x^2
      + \Delta y^2\right)/2\widehat\Delta z} \big(\Delta
  x\,d\Delta y \,-\, \Delta y \, d\Delta x\big)\,.
\end{equation}
This expression vanishes smoothly whenever
${\widehat\Delta z \to 0}$ from above with ${\Delta x^2 + 
  \Delta y^2 \neq 0}$.  

The boundary of the positive subset
${\partial[T^2 - \Delta(\varepsilon)]_+}$ in \eqref{Stks} includes
those curves where ${\widehat\Delta z = 0}$ as well as the
intersection of ${[T^2 - \Delta(\varepsilon)]_+}$ with the
boundary circles $S^1_\pm$ themselves.  Recall that points on
$S^1_\pm$ satisfy ${\eta=\pm\varepsilon}$, respectively.  By the
preceding, only the boundary integral over the intersection
${S^1_\pm\cap[T^2 - \Delta(\varepsilon)]_+}$ is relevant, because 
the boundary integrand in \eqref{PullAlph} vanishes on the locus where
${\widehat\Delta z = 0}$.

Altogether, in terms of the boundary integral on the right in \eqref{Stks},
\begin{equation}\label{dslkLII}
\frac{d\slk_\kappa(C)}{d\Lambda} \,=\,  \underset{\varepsilon\to
  0}{\lim}\left[\int_{S^1_+\cap[T^2 -
    \Delta(\varepsilon)]_+}\mskip-30mu\left(X \times
  X\right)^*\!\widehat\Gamma_\hbar^*\,\varrho_+^*\alpha_\Lambda \,-\, \int_{S^1_-\cap[T^2 -
    \Delta(\varepsilon)]_+}\mskip-30mu\left(X \times
  X\right)^*\!\widehat\Gamma_\hbar^*\,\varrho_+^*\alpha_\Lambda\right].
\end{equation}
The minus sign for the boundary integral over $S^1_{-}$ accounts for
the relative orientation in Figure \ref{Cylinder}.  

Despite the minus sign, no possibility exists for a trivial
cancellation between the two boundary integrals in \eqref{dslkLII} for
any fixed ${\varepsilon>0}$.  According to the local positivity
condition in \eqref{Posit} for respectively ${\hbar\neq 1}$ or ${\hbar=1}$, 
\begin{equation}\label{SSigns}
\begin{cases}
&\left(1-\hbar\right)\left(\gamma\times\dot{\gamma}\right) \hbox{ or }
\dot{\gamma}\times\ddot{\gamma} \le 0 \quad\hbox{on}\quad
S^1_+\cap[T^2 - \Delta(\varepsilon)]_+\,,\\
&\left(1-\hbar\right)\left(\gamma\times\dot{\gamma}\right) \hbox{ or }
\dot{\gamma}\times\ddot{\gamma} \ge 0 \quad\hbox{on}\quad
S^1_-\cap[T^2 - \Delta(\varepsilon)]_+\,.\\
\end{cases}
\end{equation}
The domains of integration over the two boundary circles $S^1_\pm$ in
\eqref{dslkLII} are therefore disjoint away from the degeneracy locus
where ${\left(1-\hbar\right)\left(\gamma\times\dot{\gamma}\right)}$
or ${\dot{\gamma}\times\ddot{\gamma} = 0}$, so no cancellation can
occur. Generically, the degeneracy locus consists of a finite set of
isolated inflection points on the curve.

Let us examine the behavior of the boundary integrand \eqref{PullAlph}
via the expansion near the diagonal from Section
\ref{LocalD}.  

\vspace{2pt}\noindent
\textbf{{\small Symmetric case ${\hbar=1}$}}
\vspace{2pt}

We initially consider the Heisenberg-symmetric case ${\hbar=1}$.
Similar to the bulk integrand in \eqref{PullChiDII}, the boundary
integrand behaves to leading-order at ${\eta = \pm\varepsilon}$ as 
\begin{equation}\label{PullChiDIII}
\left(X \times
  X\right)^*\!\widehat\Gamma^*_\hbar\,\varrho_+^*\alpha_\Lambda\Big|_{S^1_\pm}\,\buildrel{\hbar=1}\over=\,
\mp\frac{3}{2\pi\varepsilon}\,\exp{\!\left[-\frac{3\,\Lambda\,||\dot{\gamma}||^2}{\varepsilon 
    \left|\dot{\gamma}\times\ddot{\gamma}\right|}\right]}\,d\phi
\,+\, \cdots\,,
\end{equation}
where the omitted terms vanish more rapidly as ${\varepsilon\to 0}$.
By a conspiracy of signs, the difference on the
right of \eqref{dslkLII} can be rewritten as the
single integral
\begin{equation}\label{dslkLIV}
\frac{d\slk_\kappa(C)}{d\Lambda} \,\buildrel{\hbar=1}\over=\,  \lim_{\varepsilon\to
  0} \left[-\frac{3}{2\pi\varepsilon}\oint_{S^1}\!\!d\phi\,\exp{\!\left(-\frac{3\,\Lambda\,||\dot{\gamma}||^2}{\varepsilon 
    \left|\dot{\gamma}\times\ddot{\gamma}\right|}\right)}\right]=\,
0\,.
\end{equation}
To deduce the vanishing of the limit ${\varepsilon\to 0}$, we note
that the ratio ${||\dot{\gamma}||^2/|\dot{\gamma}\times\ddot{\gamma}| \ge m}$ is
everywhere bounded from below on $S^1$ by a positive constant ${m>0}$, so the
integrand in \eqref{dslkLIV} is dominated by the
exponentially-small constant 
\begin{equation}
\exp{\!\left(-\frac{3\,\Lambda\,||\dot{\gamma}||^2}{\varepsilon 
    \left|\dot{\gamma}\times\ddot{\gamma}\right|}\right)}
\,\le\,\exp{\!\left(-\frac{3\,\Lambda\,m}{\varepsilon}\right)}\,.
\end{equation}
Since ${\Lambda>0}$ has been arbitrary throughout, $\slk_\kappa(C)$ is
independent of $\Lambda$ for ${\hbar=1}$.\qquad$\square$

\vspace{2pt}\noindent
\textbf{{\small Generic case ${\hbar\neq 1}$}}
\vspace{2pt}

The analysis for generic ${\hbar\neq 1}$ is slightly more delicate.  Here
\begin{equation}\label{PullChiDIV}
\left(X \times
  X\right)^*\!\widehat\Gamma^*_\hbar\,\varrho_+^*\alpha_\Lambda\Big|_{S^1_\pm}\!\buildrel{\hbar\neq
  1}\over
=\,
\frac{\varepsilon \left(\dot{\gamma}\times\ddot{\gamma}\right)}{4\pi\,|1-\hbar|\,|\gamma\times\dot{\gamma}|}\,\exp{\!\left[-\frac{\Lambda\,||\dot{\gamma}||^2\,\varepsilon}{2\,|1-\hbar|\,|\gamma\times\dot{\gamma}|}\right]}\,d\phi
\,+\, \cdots\,,
\end{equation}
so the derivative becomes 
\begin{equation}
\frac{d\slk_\kappa(C)}{d\Lambda} \,\buildrel{\hbar\neq 1}\over=\,
\lim_{\varepsilon\to 0}\Big[{\rm I}_+(\varepsilon) \,-\, {\rm I}_-(\varepsilon)\Big],
\end{equation}
with
\begin{equation}\label{BigI}
{\rm I}_\pm(\varepsilon) \,=\, 
\frac{\varepsilon}{4\pi |1-\hbar|}\int_{S^1_\pm\cap[T^2 -
    \Delta(\varepsilon)]_+}\mskip-20mu
  d\phi\,\,\frac{\dot{\gamma}\times\ddot{\gamma}}{|\gamma\times\dot{\gamma}|}\,
  \exp{\!\left(-\frac{\Lambda\,||\dot{\gamma}||^2\,\varepsilon}{2\,|1-\hbar|\,|\gamma\times\dot{\gamma}|}\right)}\,.
\end{equation}
The functions ${\rm I}_\pm(\varepsilon)$ differ only in the domain of
integration over $S^1$, and our goal will be to show individually 
\begin{equation}\label{LimBigI}
\lim_{\varepsilon\to 0} {\rm I}_\pm(\varepsilon) \,=\, 0\,.
\end{equation}

Were the function $\gamma\times\dot{\gamma}$ to be everywhere non-zero on
$S^1$, the conclusion in \eqref{LimBigI} would be  
immediate, as we would know the integral in \eqref{BigI} to be bounded in
magnitude even for ${\varepsilon=0}$.  The explicit prefactor of $\varepsilon$
then ensures the vanishing of ${\rm I}_\pm(\varepsilon)$ in
the limit ${\varepsilon\to 0}$.  However, ${\dot{z} =
  -\gamma\times\dot{\gamma}}$ always vanishes for at least two points
(the highest and the lowest) on the knot ${C \subset \BR^3}$,
and we must worry about what happens to the integral in \eqref{BigI}
near a zero of ${\gamma\times\dot{\gamma}}$, when $\varepsilon$ is
very small.

Let us make an elementary simplification.  Since
$|\dot{\gamma}\times\ddot{\gamma}|$ is bounded from above and
${||\dot{\gamma}||^2 > 0}$ is bounded from below on $S^1$,
\begin{equation}
|{\rm I}_\pm(\varepsilon)| \,\le\, {\rm J}_\pm(\varepsilon) \,=\, \int_{S^1_\pm\cap[T^2 -
    \Delta(\varepsilon)]_+}\mskip-20mu
  d\phi\,\,\frac{ A\,\varepsilon}{|\gamma\times\dot{\gamma}|}\,
  \exp{\!\left(-\frac{B\,\varepsilon}{|\gamma\times\dot{\gamma}|}\right)}\,,\qquad A,B\,>\, 0\,, 
\end{equation}
for some positive constants $A$ and $B$, into which we also absorb the
dependence on $\Lambda$ and $\hbar$ and the various other numerical
factors in \eqref{BigI}.  To deduce the limit \eqref{LimBigI} for
${\rm I}_\pm(\varepsilon)$, we show that ${\rm
  J}_\pm(\varepsilon)$ vanishes in the same limit.

By assumption, the height function $z(\phi)$ is Morse, with isolated
non-degenerate critical points.  Equivalently, the function 
${(\gamma\times\dot{\gamma})(\phi)}$ vanishes non-degenerately at an
isolated set of points on $S^1$.  By the criteria in \eqref{SSigns},
these points are precisely the endpoints of the intervals which compose each 
integration domain ${S^1_\pm\cap [T^2-\Delta(\varepsilon)]_+}$.
Locally near such an endpoint ${\phi=\phi_0}$,
\begin{equation}
\left(\gamma\times\dot{\gamma}\right)(\phi) \,=\, c_0 \left(\phi - \phi_0\right) \,+\, \CO\big(|\phi-\phi_0|^2\big)\,,\qquad\qquad c_0
\neq 0\,.
\end{equation}

When we examine ${\rm J}_\pm(\varepsilon)$ in the limit ${\varepsilon\to 0}$, 
only the contribution to the integral from a (one-sided) neighborhood of $\phi_0$
can be non-zero, so we simplify further by replacing 
${\rm J}_\pm(\varepsilon)$ by the model 
\begin{equation}\label{BigK}
{\rm K}(\varepsilon) \,=\, \int_{\phi_0}^{\phi_1}\!\!d\phi\,\,\frac{ \varepsilon}{f(\phi)}\,
  \exp{\!\left(-\frac{\varepsilon}{f(\phi)}\right)}\,.
\end{equation}
Here $\phi_1$ is an arbitrary upper cutoff, and $f(\phi)$ is now any
continuous function defined on the interval $[\phi_0,\phi_1]$ such that 
\begin{figure}[th]
\centering
\def\svgwidth{50mm}
\begingroup%
  \makeatletter%
  \providecommand\color[2][]{%
    \errmessage{(Inkscape) Color is used for the text in Inkscape, but the package 'color.sty' is not loaded}%
    \renewcommand\color[2][]{}%
  }%
  \providecommand\transparent[1]{%
    \errmessage{(Inkscape) Transparency is used (non-zero) for the text in Inkscape, but the package 'transparent.sty' is not loaded}%
    \renewcommand\transparent[1]{}%
  }%
  \providecommand\rotatebox[2]{#2}%
  \ifx\svgwidth\undefined%
    \setlength{\unitlength}{273.05145404bp}%
    \ifx\svgscale\undefined%
      \relax%
    \else%
      \setlength{\unitlength}{\unitlength * \real{\svgscale}}%
    \fi%
  \else%
    \setlength{\unitlength}{\svgwidth}%
  \fi%
  \global\let\svgwidth\undefined%
  \global\let\svgscale\undefined%
  \makeatother%
  \begin{picture}(1,0.82561197)%
    \put(0,0){\includegraphics[width=\unitlength]{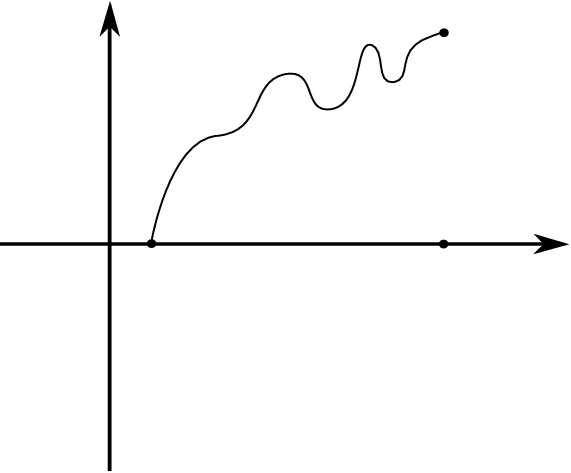}}%
    \put(0.73,0.33){\color[rgb]{0,0,0}\makebox(0,0)[lb]{\smash{$\phi_{1}$}}}%
    \put(0.37939734,0.73308087){\color[rgb]{0,0,0}\makebox(0,0)[lb]{\smash{$f(\phi)$}}}%
    \put(0.22,0.33){\color[rgb]{0,0,0}\makebox(0,0)[lb]{\smash{$\phi_{0}$}}}%
  \end{picture}%
\endgroup%
\caption{A function $f(\phi)$ satisfying the assumptions in
  Lemma \ref{CalcLM}.}\label{Function}
\end{figure} 
\begin{equation}\label{Litf}
f(\phi) > 0 \hbox{ for } \phi > \phi_0\,,\qquad f(\phi_0)\,=\,
0\,,\quad \hbox{ and }\quad \lim_{\phi\to\phi_0}\left[\frac{(\phi - \phi_0)}{f(\phi)}\right] >
0 \, \hbox{ exists}\,.
\end{equation}
For all ${\varepsilon > 0}$, the integral defining ${\rm
  K}(\varepsilon)$ exists, since the integrand 
vanishes at the endpoint ${\phi=\phi_0}$.  For convenience, we take
${\phi_0=0}$ and ${\phi_1=1}$ by a suitable choice of parameter.   The
proof of the Fundamental Lemma \ref{FundLM} 
for generic ${\hbar\neq 1}$ reduces to the following claim.
\begin{lem}\label{CalcLM} 
Let ${\rm K}(\varepsilon)$ and $f(\phi)$ be defined as in
  \eqref{BigK} and \eqref{Litf}.  Then 
\begin{equation}
\lim_{\varepsilon\to 0} {\rm K}(\varepsilon) \,=\, \lim_{\varepsilon\to 0}\left[\int_0^1\!\!d\phi\,\,\frac{ \varepsilon}{f(\phi)}\,
  \exp{\!\left(-\frac{\varepsilon}{f(\phi)}\right)}\right] \,=\, 0\,.
\end{equation}
\end{lem}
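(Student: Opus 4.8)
The plan is to show that the integrand in ${\rm K}(\varepsilon)$ is uniformly bounded by a constant independent of $\varepsilon$, so that the only issue is the behavior near $\phi_0=0$ where $f$ vanishes, and then to control that region by a change of variables. First I would observe that the function $u\mapsto u\,e^{-u}$ is bounded on $[0,\infty)$ by its maximum value $1/e$ at $u=1$. Applying this with $u=\varepsilon/f(\phi)$ shows pointwise that the integrand satisfies
\begin{equation}
0 \,\le\, \frac{\varepsilon}{f(\phi)}\,\exp\!\left(-\frac{\varepsilon}{f(\phi)}\right) \,\le\, \frac{1}{e}\,,
\end{equation}
for every $\phi\in(0,1]$ and every $\varepsilon>0$. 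Hence, for any cutoff $0<\delta<1$, the portion of the integral over $[\delta,1]$ is bounded by the pointwise estimate applied away from the zero of $f$: on $[\delta,1]$ we have $f(\phi)\ge m_\delta>0$ for some constant $m_\delta$ (by continuity and positivity of $f$ on the compact set $[\delta,1]$), so $\varepsilon/f(\phi)\le\varepsilon/m_\delta$, and since $u\,e^{-u}\le u$ the integrand there is at most $\varepsilon/m_\delta$, giving a contribution $\le\varepsilon/m_\delta\to 0$ as $\varepsilon\to 0$.

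It remains to handle the interval $[0,\delta]$. Here I would use the hypothesis in \eqref{Litf} that $\lim_{\phi\to 0}\phi/f(\phi)$ exists and is a positive number, say $L$. Fixing $\delta$ small enough, we then have a two-sided comparison $c_1\phi\le f(\phi)\le c_2\phi$ on $(0,\delta]$ for positive constants $c_1,c_2$. Since $u\,e^{-u}$ is decreasing for $u\ge 1$ and the whole quantity is in any case bounded by $1/e$, the substitution $\phi = \varepsilon\,t$ (equivalently bounding $\varepsilon/f(\phi)$ below by $\varepsilon/(c_2\phi)$ in the exponential and above by $\varepsilon/(c_1\phi)$ in the prefactor) converts the $[0,\delta]$ contribution into an integral of the form
\begin{equation}
\int_0^{\delta/\varepsilon}\!\! dt\,\,\frac{A'}{t}\,\exp\!\left(-\frac{B'}{t}\right),
\end{equation}
for fixed positive constants $A',B'$; and this integral is, up to a further substitution $s=1/t$, a finite convergent integral $\int_{\varepsilon/\delta}^{\infty} A'\, s^{-1} e^{-B' s}\,ds$ whose value tends to $0$ as $\varepsilon\to 0$, since the lower limit $\varepsilon/\delta\to 0$ and $\int_0^\infty s^{-1}e^{-B's}ds$ is finite near infinity but only logarithmically divergent near $0$ — so one must instead keep the $\varepsilon$ prefactor honest and note that the tail $\int_{\varepsilon/\delta}^{\infty}$ of a fixed integrable-near-infinity integrand is bounded and the near-$0$ piece is killed by the $\varepsilon$ scaling. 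In short: combine the uniform bound $\varepsilon/f(\phi)\cdot e^{-\varepsilon/f(\phi)}\le\min(1/e,\ \varepsilon/(c_1\phi))$ with dominated convergence, since for each fixed $\phi>0$ the integrand tends to $0$ as $\varepsilon\to 0$ and $\min(1/e,\ \varepsilon/(c_1\phi))\le 1/e$ is integrable on $[0,1]$.

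The cleanest route, and the one I would actually write, is therefore the dominated convergence argument: the integrand converges to $0$ pointwise on $(0,1]$ as $\varepsilon\to0$, and it is dominated uniformly in $\varepsilon$ by the constant $1/e$, which is integrable on $[0,1]$; hence $\lim_{\varepsilon\to0}{\rm K}(\varepsilon)=0$. The only place the hypotheses on $f$ are used is to guarantee that $\varepsilon/f(\phi)\to\infty$ for each fixed $\phi>0$ — which needs $f(\phi)$ finite and positive there — and that the integrand is well-defined (vanishes) at $\phi=0$, which needs $\varepsilon/f(\phi)\to\infty$ as $\phi\to0$, i.e. $f(\phi)\to0$; the existence of the limit $\phi/f(\phi)$ is then a convenient (mild) regularity assumption ensuring $f$ does not oscillate pathologically, though in fact positivity and continuity of $f$ on $(0,1]$ with $f(0)=0$ already suffice for dominated convergence. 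I expect the main (very minor) obstacle to be purely expository: making sure the reader sees that no delicate cancellation or oscillation is at play, and that the $\min(1/e,\dots)$ bound really is $\varepsilon$-independent, so that dominated convergence applies verbatim.
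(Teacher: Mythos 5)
Your proof is correct, but it takes a genuinely different route from the paper's. The paper avoids measure theory entirely: it first treats the model case $f(\phi)=\phi$ by the substitution $x=\varepsilon/\phi$, obtaining the explicit bound ${\rm K}(\varepsilon)\le \varepsilon|\ln\varepsilon|+\varepsilon\,e^{-1}$, and then reduces the general case to this model by writing $\varepsilon/f(\phi)=(\varepsilon/\phi)\cdot(\phi/f(\phi))$ and using the hypothesis that $\phi/f(\phi)$ extends to a continuous function bounded between positive constants $m$ and $M$ on $[0,1]$. Your dominated-convergence argument — the integrand tends to $0$ pointwise on $(0,1]$ and is uniformly dominated by the constant $\sup_{u\ge 0}u\,\e{-u}=1/e$ — is shorter and, as you observe, needs strictly weaker hypotheses (positivity of $f$ on $(0,1]$ suffices; the existence of $\lim_{\phi\to 0}\phi/f(\phi)$ is not used at all). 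What the paper's approach buys in exchange is an explicit rate, ${\rm K}(\varepsilon)=\CO(\varepsilon|\ln\varepsilon|)$, and a proof that stays within elementary calculus, consistent with the style of the surrounding error analysis. One small slip in your write-up: you say the hypotheses guarantee $\varepsilon/f(\phi)\to\infty$ for fixed $\phi>0$, but as $\varepsilon\to 0$ this ratio tends to $0$, not $\infty$; the pointwise vanishing of the integrand still holds because $u\,\e{-u}\to 0$ as $u\to 0$, so the conclusion is unaffected. The intermediate paragraph with the substitution $\phi=\varepsilon t$ is not needed and is the one place where your argument wobbles; the final dominated-convergence paragraph stands on its own.
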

\noindent
\textbf{Proof}
\vskip2pt
\noindent
We consider a succession of three cases.  

$^{\rm (i)}$\,We start with the
basic example ${f(\phi) = \phi}$, so that 
\begin{equation}
{\rm K}(\varepsilon) \,=\, \varepsilon \int_0^1 \frac{d\phi}{\phi}
\, \exp{\!\left(-\frac{\varepsilon}{\phi}\right)}\,.
\end{equation}
After the substitution ${x =
  \varepsilon/\phi}$,
\begin{equation}
{\rm K}(\varepsilon) \,=\, \varepsilon \int_{\varepsilon}^\infty
\frac{dx}{x}\,\e{-x} \,\le\, \varepsilon \int_{\varepsilon}^1
\frac{dx}{x} \,+\, \varepsilon \int_1^\infty dx\,\e{-x} \,=\,
\varepsilon \left|\ln\varepsilon\right| \,+\, \varepsilon \, \e{-1}\,,
\end{equation}
from which the limit follows.  

$^{\rm (ii)}$\,Next, let $g(\phi)$ and
$h(\phi)$ be continuous functions on the interval $[0,1]$ obeying bounds
\begin{equation}
0 \,<\, m \,\le\, g(\phi)\,,\qquad\qquad |h(\phi)|\,\le\, M\,,
\end{equation} 
for some constants $m$ and $M$.  Set 
\begin{equation}
{\rm K}(\varepsilon) \,=\, \varepsilon \int_0^1
\frac{d\phi}{\phi}\,\,h(\phi)\,\exp{\!\left[-\varepsilon\,\frac{g(\phi)}{\phi}\right]}\,.
\end{equation}
Then
\begin{equation}
{\rm K}(\varepsilon) \,\le\, M\,\varepsilon
\int_0^1\frac{d\phi}{\phi}\,\exp{\left(-\varepsilon\,\frac{m}{\phi}\right)}
\,=\,  M\,\varepsilon
\int_0^{1/m}\frac{d\phi}{\phi}\,\exp{\left(-\frac{\varepsilon}{\phi}\right)}\,.
\end{equation}
The function ${\rm K}(\varepsilon)$ vanishes as ${\varepsilon\to 0}$
by {\rm (i)}.

$^{\rm (iii)}$\,In the general case of interest,
\begin{equation}
{\rm K}(\varepsilon) \,=\, \int_{0}^{1}\!\!d\phi\,\,\frac{
  \varepsilon}{f(\phi)}\, 
  \exp{\!\left[-\frac{\varepsilon}{f(\phi)}\right]} \,=\, \varepsilon
  \int_0^1 \frac{d\phi}{\phi}\left(\frac{\phi}{f(\phi)}\right) \, \exp{\!\left[-\frac{\varepsilon}{\phi}\left(\frac{\phi}{f(\phi)}\right)\right]}\,. 
\end{equation}
Because ${f(\phi) > 0}$ for ${\phi > 0}$ by assumption, the function
${g(\phi) = h(\phi) = 
  \phi/f(\phi)}$ is continuous and
positive for all ${\phi > 0}$.  Since the limit ${\lim_{\phi\to
    0}\left[\phi/f(\phi)\right]>0}$ is also assumed to exist and be
non-zero, ${g(\phi) > 0}$ is continuous and non-vanishing throughout
the unit interval.  Hence ${0 < m \le g(\phi) \le M}$ for some
constants $m$ and $M$, and the general case follows from {\rm (ii)}.\qquad$\square$

\section{Planar limit}\label{MainThm}

According to the Fundamental Lemma \ref{FundLM}, the value of the
self-linking integral $\slk_\kappa(C)$ does not depend upon the
positive parameter ${\Lambda > 0}$ which sets the width of the
Gaussian in the heat form $\chi_\Lambda$.  To evaluate
$\slk_\kappa(C)$, and in the process to show that $\slk_\kappa(C)$ is
invariant under Legendrian isotopy, we now analyze the
self-linking integral \eqref{eq:1} in the limit ${\Lambda\to\infty}$.   The
Legendrian knot ${C\subset \BR^3}$ and its regular parametrization
${X:S^1\to\BR^3}$ remain fixed throughout.

The limit ${\Lambda\to\infty}$ has several interpretations. 
 
In terms of the heat kernel, this limit is the short-time limit, in which the
Gaussian generator $\omega_\Lambda$ for the compactly-supported cohomology
$H^2_c(\BR^2;\BZ)$ concentrates to a form with delta-function
support at the origin. More geometrically, by the Scaling Identity in
\eqref{SlKtwos}, the limit ${\Lambda\to\infty}$ is equivalent to the limit ${t\to 0}$ in
which the contact planes represented by $\kappa_t$ in
\eqref{Kapt} and the Legendrian knot $C_t$ in \eqref{BigXt} flatten to the
$xy$-plane.  Simultaneously, the Planck constant 
$\hbar$ in the Heisenberg multiplication scales to zero, and the abelian
structure of $\BR^3$ is restored.  For this reason, we refer to the
limit ${\Lambda\to\infty}$ as the planar limit.

In the planar limit, the Legendrian self-linking integral simplifies
immensely, as can be understood from the formula for the integrand
\begin{equation}\label{PullChiDD}
\begin{aligned}
&\left(X^{} \times
  X^{}\right)^*\!\widehat\Gamma_{\hbar}^*\,\chi_\Lambda \,=\, \frac{\Lambda}{2\pi\widehat\Delta z} \,\e{\!-\Lambda
  (\Delta x^2 + \Delta y^2)/2 \widehat\Delta z}\,\,\times\,\\
&\qquad\qquad\times\left[d\Delta
  x\^d\Delta y \,+ \ha \left(\Delta x \,d\Delta y -
    \Delta y\,d\Delta x\right)\!\^\frac{d\widehat\Delta
    z}{\widehat\Delta z}\right],\qquad\qquad \widehat\Delta z > 0\,.
\end{aligned}
\end{equation}
Intuitively, the behavior of the integrand is controlled by the
exponential factor in the first line of \eqref{PullChiDD}.  When
$\Lambda$ is sufficiently large, the integrand is negligible away from
the locus where 
\begin{equation}\label{AsymPT}
\left[\Delta x^2 + \Delta y^2\right]\!\Big|_{(\theta_1,\theta_2)}
\,\ll\, \frac{1}{\Lambda}\,,\qquad\qquad (\theta_1,\theta_2) \,\in\, T^2-\Delta(\varepsilon)\,.
\end{equation}
Because $\Delta x$ and $\Delta y$ are given by the differences
\begin{equation}
\Delta x \,=\, x(\theta_2) -
x(\theta_1)\,,\qquad\qquad \Delta y \,=\,
y(\theta_2) - y(\theta_1)\,,
\end{equation}
the asymptotic condition in \eqref{AsymPT} means that the pair 
${\theta_1, \theta_2}$ map under the embedding ${X:S^1\to\BR^3}$ to
points ${p,q\in C}$ which are nearly coincident under the
Lagrangian projection to the $xy$-plane.  Thus the point 
$(\theta_1,\theta_2)$ either lies near the preimage of a crossing
(aka double-point) on
the Lagrangian projection $\Pi(C)$, or
$(\theta_1,\theta_2)$ lies near the diagonal $\Delta$ itself, in the
boundary region that we previously analyzed in Section \ref{LocalD}.

With this observation, our proof of the Main Theorem proceeds in three steps.
\begin{enumerate}
\item[1.] Estimate the contribution to $\slk_\kappa(C)$ from
  each crossing of $\Pi(C)$ when $\Lambda$ is large.\vspace{2pt}
\item[2.] Estimate the contribution to $\slk_\kappa(C)$ from
  the diagonal ${\Delta\subset T^2}$ when
  $\Lambda$ is large.\vspace{2pt}
\item[3.] Bound the contributions from elsewhere on the integration domain, as well as the errors in the preceding
  estimates, by a quantity $\delta$ which can be made arbitrarily
  small as ${\Lambda\to\infty}$.
\end{enumerate}
Conceptually, the local estimates in the first two steps are most
important, because these estimates explain why the Thurston-Bennequin
invariant $\tb(C)$ and the rotation number $\rot(C)$ appear in the
formula \eqref{eq:2} for $\slk_\kappa(C)$.  We therefore 
begin in Section \ref{Local} with simple, informal computations for
the first two steps in the proof.

The technical heart of the proof resides in the third step, when we
carefully bound the errors in the preceding local computations.  This
step is required for a rigorous analysis, but the ideas are standard
and offer no surprises.  For this reason, Sections \ref{Preps} and
\ref{Bounds} could be omitted on the initial reading of the paper.   In Section
\ref{Preps} we introduce various geometric quantities to be used in
the error analysis, and in Section \ref{Bounds} we make the
necessary bounds.

\subsection{Local computations}\label{Local}

We compute the contribution to $\slk_\kappa(C)$ from a
right-handed crossing in the Lagrangian projection.  We depict such a
crossing on the left in Figure \ref{LocalOver}.  We
shall proceed softly, reserving precise inequalities for Section \ref{Bounds}.

\begin{figure}[h]
$$\begin{matrix}
&\includegraphics[scale=0.50]{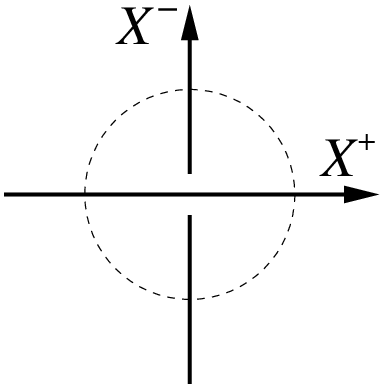}\qquad & \qquad 
&\includegraphics[scale=0.50]{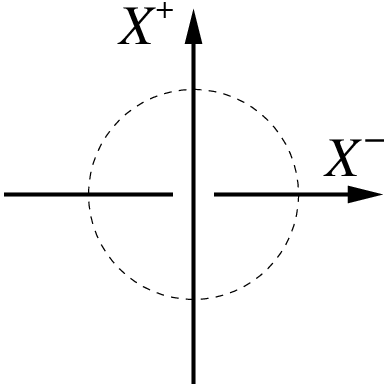}\\
&(a.)\quad\slk_\kappa=+1 \qquad & \qquad &(b.)\quad\slk_\kappa=-1
\end{matrix}$$
\caption{Neighborhoods of right- and left-handed crossings of
  $\Pi(C)$.}\label{LocalOver} 
\end{figure}
  
To first-order over the double-point, the curve $C$ is
approximated by a pair of straight lines.  For our local
computation, we take the lines to be parametrized by maps
${X^\pm\!:\BR\to\BR^3}$, where $X^+$ passes over $X^-$ by convention.  As in the
figure, we take $X^\pm$ to describe lines which are perpendicular and
lie in parallel planes,
\begin{equation}\label{Lines}
X^-(\theta_1) \,=\, \left(0,\theta_1,0\right),\qquad\qquad 
X^+(\theta_2) \,=\, \left(\theta_2,0, \Delta z\right),\qquad\qquad
\theta_{1,2}\in\BR\,.
\end{equation}
Here ${\Delta z > 0}$ is a positive constant, the height of the
overpass.  With this choice, both $X^\pm$ satisfy
the Legendrian condition \eqref{LegC} and so describe a 
Legendrian crossing.  Because we have yet to establish
isotopy-invariance of any kind, our assumptions about even the 
first-order geometry of $C$ require justification.  A significant
portion of the analysis in Section \ref{Bounds} will be devoted exactly to this issue. 

The local contribution to $\slk_\kappa(C)$ from the right-handed crossing at
${\{0\}\in\BR^2}$ is now given by 
\begin{equation}\label{LocO}
\slk_\kappa(C)\big|_{\{0\}} \,=\,
\int_{\BR^2} \left(X^-\!\times
  X^+\right)^*\!\widehat\Gamma_\hbar^*\,\chi_\Lambda\,,\qquad\qquad
\Lambda \gg 1\,,
\end{equation}
where we integrate over all ${(\theta_1,\theta_2)\in\BR^2}$, with the
standard orientation ${d\theta_1\^d\theta_2}$.  Informally, the error
which we make when we extend the range 
of integration from a small region on $T^2$
to the entire plane $\BR^2$ vanishes exponentially as
${\Lambda\to\infty}$, due to the rapid decay of the heat form
$\chi_\Lambda$ away from the origin.  By extending over all of
$\BR^2$, we will be able to evaluate the integral \eqref{LocO} in
closed form.

For the perpendicular lines $X^\pm$ in \eqref{Lines}, the differences
$\Delta x$, $\Delta y$, and $\widehat\Delta z$ in \eqref{PullChiDD} become 
\begin{equation}
\Delta x \,=\, x^+ - x^- \,=\,
\theta_2\,,\qquad\qquad \Delta y \,=\, y^+ - y^-
\,=\, -\theta_1\,, 
\end{equation}
and 
\begin{equation}
\widehat\Delta z \,=\, z^+ - z^- +
\hbar\left(x^- y^+ - x^+ y^-\right) \,=\, \Delta z - \hbar \, \theta_1
\theta_2\,.
\end{equation}
After a small calculation, one finds for the self-linking integrand in
\eqref{PullChiDD} 
\begin{equation}
\left(X^-\!\times
  X^+\right)^*\!\widehat\Gamma_{\hbar}^*\,\chi_\Lambda \,=\,
\frac{\Lambda}{2\pi \Delta z \left(1 \,-\, \hbar\,\theta_1
    \theta_2/\Delta z\right)^2} \, \exp{\!\left[-\frac{\Lambda
      \left(\theta_1^2 + \theta_2^2\right)}{2 \left(\Delta z - \hbar\,
        \theta_1 \theta_2\right)}\right]}\,d\theta_1\^d\theta_2\,,
\end{equation}
assuming the positivity condition ${\widehat\Delta z > 0\,
  \Longleftrightarrow\, \Delta z > \hbar \, \theta_1
  \theta_2}$ (else the integrand vanishes).  

Thus,
\begin{equation}\label{slkC1}
\slk_\kappa(C)\big|_{\{0\}} \,=\,
\int_{\Delta z > \hbar \, \theta_1 \theta_2}\mskip-15mu d\theta_1 d\theta_2 \,\,\frac{\Lambda}{2\pi \Delta z \left(1 \,-\, \hbar\,\theta_1
    \theta_2/\Delta z\right)^2} \, \exp{\!\left[-\frac{\Lambda
      \left(\theta_1^2 + \theta_2^2\right)}{2 \left(\Delta z - \hbar\,
        \theta_1 \theta_2\right)}\right]}\,.
\end{equation}
Since ${\Lambda \gg 1}$ is large, let us rescale the integration
variables to eliminate the overall factor of $\Lambda$ from the argument
of the exponential, 
\begin{equation}\label{slkC0}
\begin{aligned}
&\slk_\kappa(C)\big|_{\{0\}} \,=\,\\
&\qquad\int_{\Lambda\Delta z > \hbar \, \theta_1 \theta_2}\mskip-15mu d\theta_1 d\theta_2 \,\,\frac{1}{2\pi \Delta z \left(1 \,-\, \hbar\,\theta_1
    \theta_2/\Lambda\Delta z\right)^2} \, \exp{\!\left[-\frac{
      \theta_1^2 + \theta_2^2}{2 \,\Delta z \left(1 - \hbar\,
        \theta_1 \theta_2/\Lambda\Delta z\right)}\right]}\,.
\end{aligned}
\end{equation}
After we expand the integrand of \eqref{slkC0} asymptotically in
$1/\Lambda$, the local contribution to $\slk_\kappa(C)$ from the
right-handed crossing can be evaluated as a Gaussian integral over $\BR^2$,
\begin{equation}\label{GausR2}
\begin{aligned}
\slk_\kappa(C)\big|_{\{0\}} \,&=\,
\int_{\BR^2}\!\! d\theta_1 d\theta_2 \,\,\frac{1}{2\pi \Delta z} \,
\exp{\!\left[-\frac{\theta_1^2 + \theta_2^2}{2 \Delta z}\right]}\,+\,
\CO\big(1/\Lambda\big),\\
&=\,1 \,+\, \CO\big(1/\Lambda\big)\,.
\end{aligned}
\end{equation}
Note that all dependence on the homotopy parameter $\hbar$ disappears
as soon as we perform the asymptotic expansion in $\Lambda$.

In principle, the contribution from the right-handed crossing in
Figure \ref{LocalOver} also includes the portion of the integration
domain ${T^2 - \Delta(\varepsilon)}$ where the roles of $\theta_1$ and
$\theta_2$ are swapped in \eqref{Lines}, with $X^+\equiv
X^+(\theta_1)$ and ${X^-\equiv X^-(\theta_2)}$.  In this case,
${\widehat\Delta z = -\Delta z + \hbar \, \theta_1 \theta_2 < 0}$ is
negative near the origin, and the self-linking integrand vanishes
identically by the definition of the heat form $\chi_\Lambda$.

Finally, to evaluate the local contribution from the left-handed
crossing in Figure \ref{LocalOver}, we simply swap the roles of $X^+$
and $X^-$.  Apparently from \eqref{LocO}, this swap is equivalent to
an orientation-reversal on $\BR^2$, so the sign of
$\slk_\kappa(C)|_{\{0\}}$ is reversed.  

Comparing to our conventions
for the writhe in Figure \ref{Writhe}, we conclude that
$\slk_\kappa(C)|_{\{0\}}$ is the local writhe of the given crossing in
$\Pi(C)$.  In total, the local contribution to $\slk_\kappa(C)$
from the crossings is precisely the Thurston-Bennequin invariant of $C$,
\begin{equation}\label{LocTB}
\sum_{a\in\textbf{I}}\,\slk_\kappa(C)\big|_a \,=\,
\w\!\big(\Pi(C)\big) \,=\, \tb(C)\,,
\end{equation}
where $\textbf{I}$ indexes the set of all crossings in the Lagrangian
projection.  The localization computation is also
consistent with Proposition \ref{HLink} regarding
Heisenberg linking, together with the diagrammatic formula for $\lk(C_1,C_2)$ in
\eqref{LkDiag}.

More interesting is the local contribution to $\slk_\kappa(C)$ from the
diagonal ${\Delta\subset T^2}$.   This
contribution does depend (weakly) on the value of $\hbar$, a small remnant of
the topological anomaly.  Integrating over a neighborhood of
the diagonal really means integrating over the two boundary regions on
the cylinder ${T^2-\Delta(\varepsilon)}$, as we have already considered in our
proof of the Fundamental Lemma in Section \ref{Fundamental}.  So we do
not need to perform any new computations to evaluate the contribution
from the diagonal.

We begin with the generic case ${\hbar\neq 1}$, for which the local
expression for the self-linking integrand appears in
\eqref{PullChiDIIO}.  Directly for ${\Lambda \gg 1}$,
\begin{equation}\label{RotI}
\begin{aligned}
&\slk_\kappa(C)\big|_\Delta \,\buildrel{\hbar\neq 1}\over =\,\\
&\qquad-\int_{S^1_+\cap
  [(1-\hbar)(\gamma\times\dot{\gamma})<0]}\mskip-25mu
d\phi\,\left[\int_0^\infty \!\!d\eta \,\,\frac{\Lambda\left(\dot{\gamma}\times\ddot{\gamma}\right)}{4\pi\,
  |1-\hbar|\,
  |\gamma\times\dot{\gamma}|}\,\exp{\!\left(-\frac{\Lambda\,
      ||\dot{\gamma}||^2\,\eta}{2\,|1-\hbar|\,
      |\gamma\times\dot{\gamma}|}\right)}\right] +\,\\
&\qquad-\int_{S^1_-\cap
  [(1-\hbar)(\gamma\times\dot{\gamma})>0]}\mskip-25mu
d\phi\,\left[\int_0^\infty \!\!d\eta \,\,\frac{\Lambda\left(\dot{\gamma}\times\ddot{\gamma}\right)}{4\pi\,
  |1-\hbar|\,
  |\gamma\times\dot{\gamma}|}\,\exp{\!\left(-\frac{\Lambda\,
      ||\dot{\gamma}||^2\,\eta}{2\,|1-\hbar|\,
      |\gamma\times\dot{\gamma}|}\right)}\right]\,.
\end{aligned}
\end{equation}
The integrals in the two lines of \eqref{RotI} describe
the respective local contributions to $\slk_\kappa(C)$ from collar neighborhoods
of the boundary circles $S^1_\pm$ on the cylinder in Figure
\ref{Cylinder}.  Both integrals appear with identical
signs, after one takes into account the relative boundary
orientations on $S^1_\pm$ and the explicit dependence of the integrand
on $\sgn(\eta)$ in \eqref{PullChiDIIO}.  We make a trivial change of
  variables so that the integral in the neighborhood of $S^1_-$ also
  runs over positive, as opposed to negative, values of $\eta$.  By
the positivity condition in \eqref{Posit}, the integral over $S^1_+$
runs over the subset where
${\left(1-\hbar\right)\left(\gamma\times\dot{\gamma}\right)<0}$, and the
integral over $S^1_-$ runs over the complement.  Finally, we extend
the integration range over the normal coordinate $\eta$ to infinity at
the cost of an exponentially small error for large $\Lambda$, and we
set ${\varepsilon = 0}$ at the lower limit of integration for $\eta$.
 
After integrating over $\eta$ in \eqref{RotI},
\begin{equation}
\slk_\kappa(C)\big|_\Delta \,\buildrel{\hbar\neq 1}\over =\,
-\frac{1}{2\pi}\int_{S^1_+\cap[(1-\hbar)(\gamma\times\dot{\gamma})<0]}\mskip-10mu
d\phi\,\,\frac{\dot{\gamma}\times\ddot{\gamma}}{||\dot{\gamma}||^2} 
\,-\,\frac{1}{2\pi}\int_{S^1_-\cap[(1-\hbar)(\gamma\times\dot{\gamma})>0]}\mskip-10mu
d\phi\,\,\frac{\dot{\gamma}\times\ddot{\gamma}}{||\dot{\gamma}||^2}\,,
\end{equation}
or put more succinctly,
\begin{equation}
\slk_\kappa(C)\big|_\Delta \,\buildrel{\hbar\neq 1}\over
=\,-\frac{1}{2\pi}\oint_{S^1}\!\!d\phi\,\frac{\dot{\gamma}\times\ddot{\gamma}}{||\dot{\gamma}||^2}\,.
\end{equation}
So long as ${\hbar\neq 1}$, all dependence on $\hbar$ disappears.
From the geometric expression for the rotation number in
\eqref{LocRot}, we deduce
\begin{equation}\label{RotII}
\slk_\kappa(C)\big|_\Delta \,\buildrel{\hbar\neq 1}\over =\,
-\rot(C)\,.
\end{equation}

On general grounds, the appearance of the rotation number in this
calculation is not so surprising, as one was guaranteed to find the
integral of some local geometric quantity on $C$.  However, the
integrality of the result \eqref{RotII} comes as a minor miracle,
which is far from obvious from the definition of the Legendrian
self-linking integral in \eqref{eq:1}.  Remember, the value of the
naive Gauss self-linking integral $\slk_0(C)$ is not even a
deformation-invariant!

We return to our formula in \eqref{PullChiDII} to evaluate the local
self-linking contribution from the diagonal when ${\hbar=1}$,
\begin{equation}\label{PullChiDDD}
\left(X \times
  X\right)^*\!\widehat\Gamma^*_\hbar\,\chi_\Lambda\Big|_{\Delta(\varepsilon)}
\,\buildrel{\hbar=1}\over=\,-\frac{3\,\Lambda}{2\pi
  \eta^2} \exp{\!\left[-\frac{3\,\Lambda\,||\dot{\gamma}||^2}{
      \left|\dot{\gamma}\times\ddot{\gamma}\right|}\frac{1}{|\eta|}\right]}\,d\phi\^d\eta
\,+\,\cdots\,.
\end{equation}
Unlike the expressions in \eqref{RotI}, which are non-zero
for ${\eta=0}$, the self-linking integrand in \eqref{PullChiDDD}
vanishes exponentially as ${\eta\to 0}$ for any ${\Lambda > 0}$.  By
inspection we conclude 
\begin{equation}
\slk_\kappa(C)\big|_\Delta \,\buildrel{\hbar=1}\over =\, 0\,.
\end{equation}
If the Heisenberg symmetry is preserved, the diagonal does not
contribute to the Legendrian self-linking integral.

At least informally, modulo precise control of the error terms, we
obtain from these local computations the statement in the Main Theorem,
\begin{equation}
\begin{aligned}
\slk_\kappa(C) \,&=\, \lim_{\Lambda\to\infty} \slk_\kappa(C) \,=\,
\sum_{a\in\textbf{I}}\slk_\kappa(C)\big|_a 
\,+\, \slk_\kappa(C)\big|_\Delta\,,\\
&=\,\begin{cases}
\quad\tb(C)-\rot(C)\,,\qquad &\hbar\,\neq\,1\,,\\
\quad\tb(C)\,,\qquad &\hbar\,=\,1\,.
\end{cases}
\end{aligned}
\end{equation}

\subsection{Some preliminaries}\label{Preps}

The informal localization computation in Section \ref{Local} is useful
for developing geometric intuition about the behavior of the Legendrian
self-linking integral.  To prove the Main Theorem, we retrace the same
route philosophically, but we exercise greater care in analyzing
the dependence of the error terms on $\Lambda$, at least when $\Lambda$ is
large.

Before we establish precise inequalities in
Section \ref{Bounds}, we need to introduce a bevy of constants related
to the geometry of the knot ${C \subset \BR^3}$ and its Lagrangian
projection $\Pi(C)$.  These constants are important, as the
required bounds fall out naturally from them.

\vspace{2pt}\noindent
\textbf{\small{Local Neighborhoods of Crossings}}
\vspace{2pt}

The notion of a collar neighborhood for the boundary of ${T^2 -
  \Delta(\varepsilon)}$ is unambiguous, but we also need a proper
notion for the neighborhood of each crossing in $\Pi(C)$.  The trick
will be to choose these neighborhoods to be small enough so that the geometry of
$C$ is controlled over each neighborhood.  Informally, we 
treated this issue by linearizing $C$ in Figure \ref{LocalOver}, but now we
work nonlinearly.

As before, ${\gamma:S^1\to \BR^2}$ is the
regular immersed plane curve which is the Lagrangian projection of the
embedding ${X:S^1\to\BR^3}$,
\begin{align}\label{eq:39}
\gamma\,=\,\Pi\circ X\,,\qquad\qquad \gamma(\theta) \,\equiv\,
  \left(x(\theta),\, y(\theta)\right).
\end{align}
By assumption, $\gamma$ has only a finite number $n$ of simple 
double-point singularities, located at positions
\begin{equation}
\gamma_1,\,\gamma_2,\,\ldots,\,\gamma_n\,\in\,\BR^2\,.
\end{equation}
See Figure \ref{LgTref} for our canonical trefoil example, with ${n=5}$.  Each
crossing $\gamma_a$ for ${a=1,\ldots,n}$ lies under a pair 
of corresponding points ${(p_a, q_a)}$ on the knot $C$, 
\begin{equation}
\gamma_a \,=\, \Pi(p_a) \,=\, \Pi(q_a)\,,\qquad\qquad p_a,q_a \in C\,.
\end{equation}
Let $z_a^{}$ and $z_a'$ be the respective heights of $p_a$
and $q_a$, so that these points have coordinates in $\BR^3$ given by 
\begin{equation}
p_a \,=\, \big(\gamma_a,\,z_a\big),\qquad\qquad q_a \,=\,
\big(\gamma_a^{},\,z'_a\big)\,.
\end{equation}
As in the informal computation, an important
geometric quantity is the absolute difference $\Delta z_a$ in the
heights of $p_a$ and $q_a$ over the crossing,
\begin{equation}
\Delta z_a \,=\, \left|z_a^{} - z_a'\right|\,>\, 0\,.
\end{equation}
See Figure \ref{Lagproj} for a local (nonlinear) picture of $C$ near the points
$p_a$ and $q_a$.

\begin{figure}[t]
\centering
\def\svgwidth{60mm}
\begingroup%
  \makeatletter%
  \providecommand\color[2][]{%
    \errmessage{(Inkscape) Color is used for the text in Inkscape, but the package 'color.sty' is not loaded}%
    \renewcommand\color[2][]{}%
  }%
  \providecommand\transparent[1]{%
    \errmessage{(Inkscape) Transparency is used (non-zero) for the text in Inkscape, but the package 'transparent.sty' is not loaded}%
    \renewcommand\transparent[1]{}%
  }%
  \providecommand\rotatebox[2]{#2}%
  \ifx\svgwidth\undefined%
    \setlength{\unitlength}{235.7bp}%
    \ifx\svgscale\undefined%
      \relax%
    \else%
      \setlength{\unitlength}{\unitlength * \real{\svgscale}}%
    \fi%
  \else%
    \setlength{\unitlength}{\svgwidth}%
  \fi%
  \global\let\svgwidth\undefined%
  \global\let\svgscale\undefined%
  \makeatother%
  \begin{picture}(1,0.78192618)%
    \put(0,0){\includegraphics[width=\unitlength]{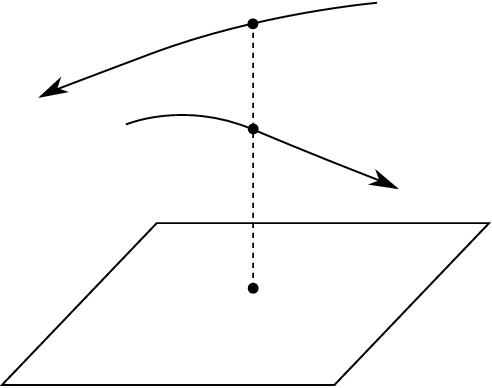}}%
    \put(0.56699418,0.70218189){\color[rgb]{0,0,0}\makebox(0,0)[lb]{\smash{$p_a=(\gamma_a^{},z_a^{})$}}}%
    \put(0.55842263,0.55475193){\color[rgb]{0,0,0}\makebox(0,0)[lb]{\smash{$q_a=(\gamma_a^{},z_a')$}}}%
    \put(0.5687085,0.20503421){\color[rgb]{0,0,0}\makebox(0,0)[lb]{\smash{$\gamma_{a}$}}}%
  \end{picture}%
\endgroup%
\caption{Two points $p_a$ and $q_a$ on $C$ with coincident Lagrangian projections.}\label{Lagproj}
\end{figure}

Let ${D\!\left(\gamma_a;\Rh\right)\equiv D_a(\Rh) \subset \BR^2}$ be
the open disc of radius ${\Rh > 0}$ centered at the location
$\gamma_a$ of a given crossing in the plane.  The union ${\cup\,
  D_a(\Rh)}$ of these discs, each with the same radius $\Rh$, provides an open neighborhood for all crossings in
$\Pi(C)$.  We now choose ${\Rh > 0}$ to be sufficiently small so that the 
following statements are true at each crossing.  By continuity of $X$
and compactness of the closure $\bar{D_a(\Rh)}$, such a choice is
always possible.

For ease of notation, we suppress the crossing index `$a$' below.
\begin{enumerate}
\item[1.]  The disc $D(\Rh)$ intersects the immersed plane curve $\gamma$ in
  two arcs, as shown in Figure \ref{Inter}.  We denote these arcs by
  $\gamma^+$ and $\gamma^-$, where `$\pm$' indicate the respective upper
  and lower strands at the crossing.  Over the disc,
  the embedding $X$ restricts to a pair of maps ${X^-(\theta_1) =
    (\gamma^-(\theta_1),\,z^-(\theta_1))}$ and ${X^+(\theta_2) =
    (\gamma^+(\theta_2),\, z^+(\theta_2))}$, with 
  ${z^+ > z^-}$.  Here $X^\pm$ are nonlinear analogues of the
  expressions in \eqref{Lines}.

\begin{figure}[ht]
\centering
\def\svgwidth{50mm}
\begingroup%
  \makeatletter%
  \providecommand\color[2][]{%
    \errmessage{(Inkscape) Color is used for the text in Inkscape, but the package 'color.sty' is not loaded}%
    \renewcommand\color[2][]{}%
  }%
  \providecommand\transparent[1]{%
    \errmessage{(Inkscape) Transparency is used (non-zero) for the text in Inkscape, but the package 'transparent.sty' is not loaded}%
    \renewcommand\transparent[1]{}%
  }%
  \providecommand\rotatebox[2]{#2}%
  \ifx\svgwidth\undefined%
    \setlength{\unitlength}{192.575bp}%
    \ifx\svgscale\undefined%
      \relax%
    \else%
      \setlength{\unitlength}{\unitlength * \real{\svgscale}}%
    \fi%
  \else%
    \setlength{\unitlength}{\svgwidth}%
  \fi%
  \global\let\svgwidth\undefined%
  \global\let\svgscale\undefined%
  \makeatother%
  \begin{picture}(1,0.92072627)%
    \put(0,0){\includegraphics[width=\unitlength]{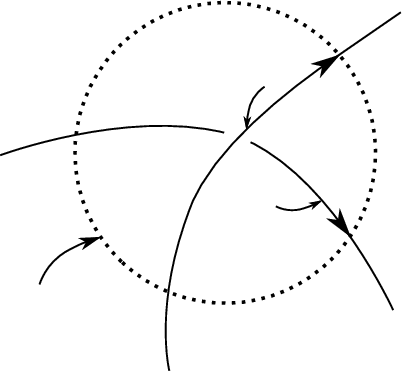}}%
    \put(0.63442225,0.75){\color[rgb]{0,0,0}\makebox(0,0)[lb]{\smash{$\gamma^+$}}}%
    \put(0.64124139,0.43999776){\color[rgb]{0,0,0}\makebox(0,0)[lb]{\smash{$\gamma^-$}}}%
    \put(0.01810753,0.12){\color[rgb]{0,0,0}\makebox(0,0)[lb]{\smash{$D(\Rh)$}}}%
  \end{picture}%
\endgroup%
\caption{Intersection of $D(\Rh)$ and $\gamma$.}\label{Inter}
\end{figure} 

\item[2.]  With the same arcs $\gamma^\pm$ in mind, let ${\Gamma_{2d}:\BR^2\times\BR^2\to\BR^2}$ be the
  two-dimensional difference map
\begin{equation}
\Gamma_{2d}(u,v) \,=\, v - u\,.
\end{equation}
Consider the composition
\begin{equation}
\varphi \,=\, \Gamma_{2d} \circ \left(\gamma^- \times \gamma^+\right)
= \gamma^+ - \gamma^-\,,
\end{equation}
which maps the region on $T^2$ where
${\gamma^-\times\gamma^+}$ is locally defined to another region on the $uv$-plane.  Then
${\varphi \equiv \left(u(\theta_1,\theta_2), v(\theta_1,\theta_2)\right)}$
is a diffeomorphism to a curvy quadrilateral
region ${\CQ}$ around the origin in the $uv$-plane, as in
Figure \ref{Quadri}.  

Eventually, we will use $\varphi$ to make a
change-of-variables to simplify the self-linking integrand in the
neighborhood of the crossing.

\begin{figure}[h]
\centering
\def\svgwidth{90mm}
\begingroup%
  \makeatletter%
  \providecommand\color[2][]{%
    \errmessage{(Inkscape) Color is used for the text in Inkscape, but the package 'color.sty' is not loaded}%
    \renewcommand\color[2][]{}%
  }%
  \providecommand\transparent[1]{%
    \errmessage{(Inkscape) Transparency is used (non-zero) for the text in Inkscape, but the package 'transparent.sty' is not loaded}%
    \renewcommand\transparent[1]{}%
  }%
  \providecommand\rotatebox[2]{#2}%
  \ifx\svgwidth\undefined%
    \setlength{\unitlength}{390.54202231bp}%
    \ifx\svgscale\undefined%
      \relax%
    \else%
      \setlength{\unitlength}{\unitlength * \real{\svgscale}}%
    \fi%
  \else%
    \setlength{\unitlength}{\svgwidth}%
  \fi%
  \global\let\svgwidth\undefined%
  \global\let\svgscale\undefined%
  \makeatother%
  \begin{picture}(1,0.3593107)%
    \put(0,0){\includegraphics[width=\unitlength]{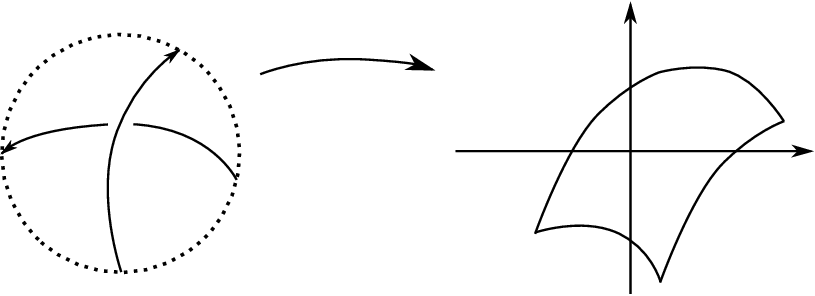}}%
    \put(0.42057511,0.32058002){\color[rgb]{0,0,0}\makebox(0,0)[lb]{\smash{$\varphi$}}}%
    \put(0.89775252,0.22017614){\color[rgb]{0,0,0}\makebox(0,0)[lb]{\smash{${\mathcal Q}$}}}%
    \put(0.96185018,0.12){\color[rgb]{0,0,0}\makebox(0,0)[lb]{\smash{$u$}}}%
    \put(0.796753,0.34307401){\color[rgb]{0,0,0}\makebox(0,0)[lb]{\smash{$v$}}}%
  \end{picture}%
\endgroup%
\caption{Diffeomorphism ${\varphi=\gamma^+-\gamma^-}$ to $\CQ$.}\label{Quadri}
\end{figure} 

\item[3.]  Let $J_\varphi$ be the Jacobian for the change-of-variables
  induced by $\varphi$ from $(\theta_1,\theta_2)$ to $(u,v)$,
\begin{equation}
J_\varphi \,=\, \left|\frac{d\gamma^+}{d\theta_2} \times \frac{d\gamma^-}{d\theta_1}\right|\,.
\end{equation}
Since $\varphi$ is a diffeomorphism, ${J_\varphi \neq 0}$ is non-vanishing
throughout the domain of $\varphi$, as illustrated in
Figure \ref{Nonvan}.  We go slightly further and assume that
$J_\varphi$ is uniformly bounded from below by a positive 
constant 
\begin{equation}\label{BoundJ}
0 < \RJ_0 < J_\varphi\,.
\end{equation}

\begin{figure}[ht]
\centering
\def\svgwidth{40mm}
\begingroup%
  \makeatletter%
  \providecommand\color[2][]{%
    \errmessage{(Inkscape) Color is used for the text in Inkscape, but the package 'color.sty' is not loaded}%
    \renewcommand\color[2][]{}%
  }%
  \providecommand\transparent[1]{%
    \errmessage{(Inkscape) Transparency is used (non-zero) for the text in Inkscape, but the package 'transparent.sty' is not loaded}%
    \renewcommand\transparent[1]{}%
  }%
  \providecommand\rotatebox[2]{#2}%
  \ifx\svgwidth\undefined%
    \setlength{\unitlength}{145.92890694bp}%
    \ifx\svgscale\undefined%
      \relax%
    \else%
      \setlength{\unitlength}{\unitlength * \real{\svgscale}}%
    \fi%
  \else%
    \setlength{\unitlength}{\svgwidth}%
  \fi%
  \global\let\svgwidth\undefined%
  \global\let\svgscale\undefined%
  \makeatother%
\begin{picture}(1,1.00007174)%
    \put(0,0){\includegraphics[width=\unitlength]{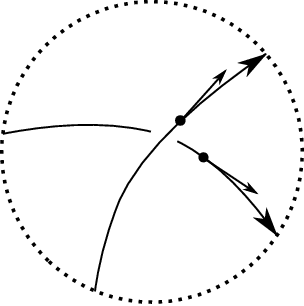}}%
    \put(0.45483778,0.77513141){\color[rgb]{0,0,0}\makebox(0,0)[lb]{\smash{$\frac{d\gamma^+}{d\theta_{2}}$}}}%
    \put(0.46975858,0.39668187){\color[rgb]{0,0,0}\makebox(0,0)[lb]{\smash{$\frac{d\gamma^-}{d\theta_{1}}$}}}%
  \end{picture}%
\endgroup%
\caption{The Jacobian ${J_\varphi \neq 0}$ of $\varphi$.}\label{Nonvan}
\end{figure} 

\item[4.]  For the crossing labelled by `$a$', consider all pairs of points on ${C\subset \BR^3}$ which lie
  in the image of ${X^+_a \times X^-_a}$ over the disc $D_a(\Rh)$.  Then the difference in heights
  ${|z^+_a - z^-_a|}$ for all such pairs lies in the range 
\begin{equation}\label{eq:40}
\left(1-\frac{\Rc}{2}\right) \Delta z_a \,<\, |z^+_a - z^-_a| \,<\, \left(1+\frac{\Rc}{2}\right)\Delta z_a\,.
\end{equation}
Here $\Rc$ is a positive constant, independent of $a$, bounded by 
\begin{equation}\label{Boundc}
0 \,<\, \Rc \,<\, \ha\,.
\end{equation}

This assumption in \eqref{eq:40} implies that the
difference $|z^+_a - z^-_a|$ for all pairs of points on $C$ projecting to
$D_a(\Rh)$ obeys\footnote{The bound in \eqref{eq:41}
  is not sharp given \eqref{eq:40} but will suffice for us.}  
\begin{equation}\label{eq:41}
1-\Rc<\frac{\Delta z_a}{|z^+_a-z^-_a|}<1+\Rc\,,
\end{equation}
for $\Rc$ in the given range.  Informally, the constant $\Rc$ controls
the variation in the vertical separation between the two strands of $C$ over the disc $D_a(\Rh)$, relative to the
separation $\Delta z_a$ over the crossing itself.  The constant `$1/2$'
in the bound \eqref{Boundc} is a convenient choice related to other inequalities 
later.
\end{enumerate}

\noindent
\textbf{\small{Bounds on the Integrand}}
\vspace{2pt}

Our assumption about the radius $\Rh$ of the discs $D_a(\Rh)$ gives us
adequate control on the local geometry of $C$ above each
crossing $\gamma_a$.  We now introduce further constants related to the
magnitude of the self-linking integrand itself.

First, let ${\RZ>0}$ be the height of the knot ${C \subset \BR^3}$.
More formally, $\RZ$ is the maximum vertical separation between any
pair of points on $C$,
\begin{equation}
\RZ \,=\, \max_{\left(\theta_1,\theta_2\right)\in T^2}\Big[\big|z(\theta_2)-z(\theta_1)\big|\Big].
\end{equation}
We now fix a small positive constant ${\delta > 0}$ which will control
the errors.  In reference to the dependence of the heat form $\chi_\Lambda$
on ${r^2=x^2 + y^2}$ and $z$ in \eqref{PullChi}, we note the following lemma.
\begin{lem}\label{BoundLM}
Given $\delta>0$ and sufficiently large $\Lambda$, there exists a
constant ${\Rr_\Lambda>0}$, depending on $\Lambda$, so that for all
${r > \Rr_\Lambda}$, 
\begin{equation}\label{TopB}
\underset{0<z\leq \RZ}{\sup}\left[\frac{\Lambda}{2\pi
  z}\exp\Big(-\frac{\Lambda r^2}{2z}\Big)\right]<\delta\,,
\end{equation}
and 
\begin{align}\label{eq:42a}
\underset{0<z\leq \RZ}{\sup}\left[\frac{\Lambda}{4\pi
  z^2}\exp\Big(-\frac{\Lambda r^2}{2z}\Big)\right]<\delta\,.
\end{align}
\end{lem}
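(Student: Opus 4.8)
The plan is to handle both estimates by the same elementary device, so fix $\Lambda>0$ (the hypothesis that $\Lambda$ be large will not, in fact, be needed). Up to the harmless numerical factors $\tfrac{1}{2\pi}$ and $\tfrac{1}{4\pi}$, both quantities to be bounded have the shape $\dfrac{\Lambda}{z^{k}}\,\exp\!\big(-\Lambda r^{2}/2z\big)$ with $k=1$ in \eqref{TopB} and $k=2$ in \eqref{eq:42a}, and the supremum runs over the compact range $0<z\le\RZ$. The only real issue is the singularity of $z^{-k}$ as $z\to 0^{+}$, and the idea is to spend half of the Gaussian exponent to dominate it.

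Concretely, I would split $\exp\!\big(-\Lambda r^{2}/2z\big)=\exp\!\big(-\Lambda r^{2}/4z\big)\cdot\exp\!\big(-\Lambda r^{2}/4z\big)$, bundle the first factor with the prefactor $\Lambda z^{-k}$, and evaluate the second factor at the worst point of the range. On $(0,\infty)$ the function $z\mapsto \Lambda z^{-k}\exp\!\big(-\Lambda r^{2}/4z\big)$ vanishes at both ends (the exponential beats $z^{-k}$ as $z\to0^{+}$, and $z^{-k}\to 0$ as $z\to\infty$) and has a single interior critical point, at $z=\Lambda r^{2}/4k$; an elementary computation there gives the global maximum $C/r^{2k}$ for an explicit constant $C=C(k,\Lambda)$. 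Since $\exp\!\big(-\Lambda r^{2}/4z\big)$ is increasing in $z$, it is at most $\exp\!\big(-\Lambda r^{2}/4\RZ\big)$ on $0<z\le\RZ$. Multiplying, for both $k=1,2$,
\[
\sup_{0<z\le\RZ}\ \frac{\Lambda}{z^{k}}\,\exp\!\Big(-\frac{\Lambda r^{2}}{2z}\Big)\ \le\ \frac{C(k,\Lambda)}{r^{2k}}\,\exp\!\Big(-\frac{\Lambda r^{2}}{4\RZ}\Big)\,.
\]
(Equivalently, the substitution $s=\Lambda r^{2}/2z$ turns the supremum into $\tfrac{2^{k}}{\Lambda^{k-1}r^{2k}}\sup_{s\ge \Lambda r^{2}/2\RZ}s^{k}e^{-s}$; since $s^{k}e^{-s}$ is decreasing for $s>k$, once $r$ is large enough that $\Lambda r^{2}/2\RZ\ge k$ the supremum is attained at the left endpoint and equals $\RZ^{-k}\Lambda\,\exp(-\Lambda r^{2}/2\RZ)$, which is an even cleaner closed form.)

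With this inequality the lemma is immediate: for fixed $\Lambda$ the right-hand side tends to $0$ as $r\to\infty$, so given $\delta>0$ one chooses $\Rr_{\Lambda}$ large enough that $C(k,\Lambda)\,r^{-2k}\exp(-\Lambda r^{2}/4\RZ)<\delta$ for all $r>\Rr_{\Lambda}$ and both $k=1,2$; reinserting the factors $\tfrac{1}{2\pi}$ and $\tfrac{1}{4\pi}$ only helps. I do not expect any genuine obstacle here — the whole content is that the $z^{-k}$ blow-up at $z=0$ is controlled by the Gaussian, which is exactly what splitting the exponent arranges; the remaining steps are routine one-variable calculus and a choice of threshold.
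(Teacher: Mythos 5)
Your argument is correct for the lemma as literally stated, but it takes a different route from the paper and, more importantly, delivers less than what the paper actually needs downstream. You bound the supremum by splitting the Gaussian exponent, taking the global maximum of $\Lambda z^{-k}\e{-\Lambda r^2/4z}$ over all $z>0$, and then letting $r\to\infty$ for fixed $\Lambda$; this yields existence of some $\Rr_\Lambda$, with no control on its size. The paper instead imposes the explicit threshold $\Rr_\Lambda^2=4\RZ/\sqrt{\Lambda}$, shows that for $r>\Rr_\Lambda$ the function is monotonically increasing in $z$ so the supremum sits at $z=\RZ$, and then uses $\Lambda r^2/2\RZ>\sqrt{\Lambda}$ together with largeness of $\Lambda$ to beat $\delta$. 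The point of that extra care is stated in the proof and exploited immediately afterwards: the subsequent localization argument requires $\Rr_\Lambda\sim\Lambda^{-1/4}\to 0$, so that the set $\CU$ where the integrand is non-negligible shrinks onto the diagonal and the crossing preimages, and so that the tube width $\Rw=\Rr_\Lambda\sqrt{2}/m$ tends to zero. A bare existential $\Rr_\Lambda$ (which in your construction could a priori grow with $\Lambda$) would not suffice there. That said, your parenthetical closed form $\Lambda\,\RZ^{-k}\exp(-\Lambda r^2/2\RZ)$ recovers the paper's quantitative statement with one more line: evaluating it at $r^2=4\RZ/\sqrt{\Lambda}$ gives $\Lambda\,\RZ^{-k}\e{-2\sqrt{\Lambda}}<\delta$ for $\Lambda$ large, which is exactly the paper's choice. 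I would add that line so the lemma you prove is the one the rest of Section \ref{Bounds} consumes.
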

\noindent
\textbf{Proof}
\vskip2pt
\noindent
The proof of the lemma is elementary, but we wish to gain precise
knowledge about how $\Rr_\Lambda$ must depend upon $\RZ$ and
$\Lambda$ for the bounds to hold.  The bounds in \eqref{TopB} and
\eqref{eq:42a} can be treated similarly; we start with the bound in \eqref{TopB}.

We differentiate the function in \eqref{TopB} with respect to $z$,
\begin{equation}\label{eq:43}
\frac{\partial}{\partial z}\Bigg[\frac{\Lambda}{2\pi
  z}\exp\Big(-\frac{\Lambda
  r^2}{2z}\Big)\Bigg]\,=\,\frac{\Lambda}{2\pi z^2}\Big(\frac{\Lambda
  r^2}{2z}-1\Big)\exp\Big(-\frac{\Lambda r^2}{2z}\Big)\,.
\end{equation}
The derivative in \eqref{eq:43} is positive so long as 
\begin{equation}\label{Boundr}
\frac{\Lambda r^2}{2 z} \,>\, 1,\qquad 0 \,<\, z \,\le\,\RZ\,,
\end{equation}
which in turn is equivalent to 
\begin{equation}\label{BoundrII}
r^2 \,>\, \frac{2 \RZ}{\Lambda}\,.
\end{equation}

We will actually require a stronger bound on $r$ in regard to its dependence on $\Lambda$.  We set 
\begin{equation}\label{RrLambda}
\Rr^2_\Lambda \,=\, \frac{2\RZ}{\sqrt{\Lambda}}\,.
\end{equation}
So long as ${\Lambda > 1}$, the condition ${r > \Rr_\Lambda}$
implies the bound in \eqref{BoundrII}, so that 
\begin{equation}
\frac{\partial}{\partial z}\Bigg[\frac{\Lambda}{2\pi
  z}\exp\Big(-\frac{\Lambda r^2}{2z}\Big)\Bigg]\,>\,0\,.
\end{equation}
The function in \eqref{TopB} therefore increases with $z$ for all ${r >
  \Rr_\Lambda}$, and the supremum is achieved at the value ${z = \RZ}$,
\begin{equation}\label{eq:44}
\underset{0<z\leq \RZ}{\sup}\left[\frac{\Lambda}{2\pi
  z}\exp\Big(-\frac{\Lambda r^2}{2z}\Big)\right]\,=\,\frac{\Lambda}{2\pi
  \RZ}\exp\Big(-\frac{\Lambda r^2}{2\RZ}\Big)\,,\qquad\qquad r > \Rr_\Lambda\,.
\end{equation}

For ${r > \Rr_\Lambda}$, the argument of the exponential
in \eqref{eq:44} satisfies
\begin{equation}\label{eq:45}
\frac{\Lambda r^2}{2\RZ}\,>\,\frac{\Lambda \Rr_{\Lambda}^2}{2\RZ}\,=\,\sqrt{\Lambda}\,.
\end{equation}
Had we imposed the weak inequality in \eqref{BoundrII},
the quantity ${\Lambda r^2/2\RZ}$ would have been bounded from
below only by a constant, independent of $\Lambda$, and we would have
no chance to achieve the bound by $\delta$ in \eqref{TopB}.
Instead, with the strong inequality ${r > \Rr_\Lambda}$ in \eqref{RrLambda},
\begin{equation}\label{eq:46}
\frac{\Lambda}{2\pi \RZ}\exp\Big(-\frac{\Lambda r^2}{2\RZ}\Big)\,<\,\frac{\Lambda}{2\pi \RZ}\exp{\!\left(-\sqrt{\Lambda}\right)}\,.
\end{equation}
For any positive ${\delta > 0}$, we now take $\Lambda$ sufficiently large
so that 
\begin{equation}
\frac{\Lambda}{2\pi \RZ}\exp{\!\left(-\sqrt{\Lambda}\right)} \,<\, \delta\,,
\end{equation}
implying via \eqref{eq:44} and \eqref{eq:46} the desired inequality in \eqref{TopB}.

The analysis of the function in \eqref{eq:42a} is identical, up to a
factor of $2$, due to appearance of the same Gaussian factor.  In this case, we take ${\Rr_\Lambda^2 = 4
  \RZ/\sqrt{\Lambda}}$.  To treat both cases of Lemma \ref{BoundLM}
simultaneously, we set
\begin{align}\label{eq:47}
\Rr^2_{\Lambda}=\max\left\{\frac{2\RZ}{\sqrt{\Lambda}},\frac{4\RZ}{\sqrt{\Lambda}}\right\}\,=\,\frac{4\RZ}{\sqrt{\Lambda}}\,.
\end{align}
With this choice for $\Rr_\Lambda$, the lemma follows.\qquad$\square$

We require one additional quantity to make our bounds on the error.
This quantity will be a function of $\Lambda$ which arises in
reference to the magnitude of the Gaussian integrand \eqref{GausR2} at
a crossing.  Briefly, for given 
$\delta$ and crossing index `$a$', we let ${\RR_a(\Lambda) > 0}$ be
the solution to  
\begin{equation}\label{CondRLam}
\frac{\Lambda}{2\pi\Delta z_a}\exp{\!\left[-\frac{\Lambda
      \RR_a(\Lambda)^2}{2\Delta z_a}\right]} \,=\,\delta\,.
\end{equation}
Explicitly,
\begin{equation}\label{BigRLam}
\RR_a(\Lambda)^2\,=\,\frac{2\Delta z_a}{\Lambda}\ln\left(\frac {\Lambda}{2\pi\Delta z_a\delta}\right)\,.
\end{equation}
As a function of $\Lambda$, $\RR_a(\Lambda)$ is decreasing for
sufficiently large $\Lambda$ and vanishes in the limit ${\Lambda\to\infty}$.

For the convenience of the reader, we summarize all these constants in
Table \ref{Constants}.
\begin{table}
\begin{center}
\begin{tabular}{l l}
\qquad$\Lambda$ \quad&\quad Width of Gaussian in $\chi_\Lambda$.\\[1
ex]
\qquad$M, m$ \quad&\quad Any positive constant which only depends upon
${C \subset \BR^3}$.\\
&\quad The values of $M$ and $m$ may differ at various places in the text.\\[1 ex]
\qquad$\delta$ \quad&\quad A fixed, small positive constant.  An
error less than ${M \delta}$ is negligible.\\[1
ex]
\qquad$\Delta z_a$ \quad&\quad Vertical displacement of $C$ over each
crossing ${\gamma_a\in\BR^2}$.\\[1 ex]
\qquad$\Rh$ \quad&\quad Radius of disc ${D_a(\Rh) \subset \BR^2}$
centered at each crossing.\\[1 ex]
\qquad$\RJ_0$ \quad&\quad Lower bound ${0 < \RJ_0 < J_\varphi}$ for
the Jacobian of $\varphi$.\\[1 ex]
\qquad$\Rc$ \quad&\quad Positive constant $<1/2$ for which ${1-\Rc <
  \Delta z_a/|z_a^+-z_a^-| < 1 + \Rc}$.\\[1 ex]
\qquad$\RZ$ \quad&\quad Total height of ${C \subset \BR^3}$.\\[1 ex]
\qquad$\Rr_\Lambda$ \quad&\quad A positive number given by
${\Rr_\Lambda^2 = 4\RZ/\sqrt{\Lambda}}$.  For sufficiently large
$\Lambda$\\
&\quad and ${r > \Rr_\Lambda}$, the inequalities in Lemma
\ref{BoundLM} are true.\\[1 ex]
\qquad$\RR_a(\Lambda)$ \quad&\quad Positive solution to 
${(\Lambda/2\pi\Delta z_a) \exp{\!\left[-\Lambda
      \RR_a(\Lambda)^2/2\Delta z_a\right]}=\delta}$.\\ \\
\end{tabular}
\caption{List of Constants.}
\label{Constants}
\end{center}
\end{table}

\subsection{Bounds on the error}\label{Bounds}

We now establish the necessary bounds to prove the Main Theorem.

The most fundamental bound emerges trivially from the definition of
the constant $\Rr_\Lambda$ in Lemma \ref{BoundLM}.  Let 
$\CU$ and $\CV$ be the subsets of the cylinder ${T^2 -
  \Delta(\varepsilon)}$ defined by 
\begin{equation}\label{BigU}
\CU \,=\, \big\{
(\theta_1,\theta_2)\in T^2-\Delta(\varepsilon) \,\Big|\, [\Delta x^2 + \Delta
  y^2]\big|_{(\theta_1,\theta_2)} \le \Rr^2_\Lambda\big\},
\end{equation}
and 
\begin{equation}
\CV \,=\, \big\{
(\theta_1,\theta_2)\in T^2-\Delta(\varepsilon) \,\Big|\, [\Delta x^2 + \Delta
  y^2]\big|_{(\theta_1,\theta_2)}  > \Rr^2_\Lambda\big\}\,.
\end{equation}
That is, $\CU$ consists of those pairs of points $(\theta_1,\theta_2)$ whose
images in the $xy$-plane under the map ${\Pi\circ\left(X\times
    X\right)}$ lie within the critical distance $\Rr_\Lambda$, and $\CV$
consists of those pairs separated in the $xy$-plane by a distance greater than
$\Rr_\Lambda$.  

Since $\CU$ and $\CV$ are complementary subsets of the cylinder, the
self-linking integral can be written as the sum 
\begin{equation}
\slk_\kappa(C) \,=\, \int_{\CU} \left(X^{} \times
  X^{}\right)^*\!\widehat\Gamma_{\hbar}^*\,\chi_\Lambda \,+\,
\int_{\CV} \left(X^{} \times
  X^{}\right)^*\!\widehat\Gamma_{\hbar}^*\,\chi_\Lambda\,.
\end{equation}
By the very definition of $\Rr_\Lambda$ in Lemma \ref{BoundLM}, the magnitude of the self-linking
integrand \eqref{PullChiDD} on $\CV$ is everywhere bounded by $\delta$.  Automatically,
\begin{equation}
\left|\slk_\kappa(C) \,-\, \int_{\CU} \left(X^{} \times
  X^{}\right)^*\!\widehat\Gamma_{\hbar}^*\,\chi_\Lambda\right| \,=\, 
\left|\int_{\CV} \left(X^{} \times
  X^{}\right)^*\!\widehat\Gamma_{\hbar}^*\,\chi_\Lambda\right| < M\,\delta\,.
\end{equation}
Here $M$ is a constant, depending on the
geometry of the knot ${C\subset \BR^3}$  but independent of $\Lambda$.  For small $\delta$, the value of
$\slk_\kappa(C)$ is well-approximated by the integral over the subset
$\CU$.

Our next task is to characterize the points in the domain of
integration that lie in
$\CU$.  If we formally set
${\Rr_\Lambda=0}$ in \eqref{BigU}, then $\CU$ consists of those pairs
$(\theta_1,\theta_2)$ which become coincident after projection to the
$xy$-plane.  Such pairs either lie along the diagonal ${\Delta \subset
  T^2}$, or they lie in the preimage of a crossing in the Lagrangian
projection of $C$.  When ${\Rr_\Lambda > 0}$ is positive but
sufficiently small, these closed sets fatten, and  $\CU$ is contained within
the disjoint union of a tubular neighborhood $N_\Delta(\Rw)$ of the
diagonal and a collection of open balls ${B_b(\Rh) \subset T^2}$
associated to the crossings,
\begin{equation}\label{BigUII}
\CU \,\subset\, N_\Delta(\Rw) \cup \bigcup_{b=1}^{2n} B_b(\Rh)\,.
\end{equation}
See Figure \ref{NbigU} for a schematic picture of the open set containing
$\CU$ for sufficiently small $\Rr_\Lambda$.  As in the picture, each
crossing in $\Pi(C)$ has two preimages, which are 
exchanged when the roles of $\theta_1$ and $\theta_2$ swap.  Thus if
$\Pi(C)$ has $n$ crossings, the index `$b$' on the balls $B_b(\Rh)$ runs to $2n$.

\begin{figure}[t]
\begin{center}
\includegraphics[scale=0.60]{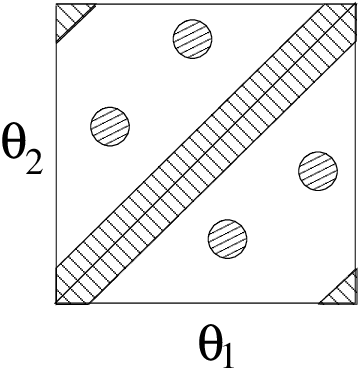}
\caption{Neighborhood of the subset $\CU$ for sufficiently small
  $\Rr_\Lambda>0$.}\label{NbigU}
\end{center}
\end{figure}

In writing $B_b(\Rh)$ for the open ball in $T^2$, we abuse notation
somewhat.  By assumption, the radius of $B_b(\Rh)$ is fixed so that
this ball lies in the preimage of the corresponding disc ${D_a(\Rh) \subset \BR^2}$ under the map
${\gamma^-_a\times\gamma^+_a}$ in Figure \ref{Inter}, 
\begin{equation}
\left(\gamma^-_a\times\gamma^+_a\right)\!\left(B_b(\Rh)\right)
\,\subset\, D_a(\Rh)\,,\qquad\qquad a \equiv b \mod n\,.
\end{equation}
Therefore the radius of $B_b(\Rh)$ is not necessarily equal to $\Rh$, but it is
determined by $\Rh$ independently of $\Lambda$.  Once $\Rh$ is fixed
in terms of the geometry of $C$, we can always take 
$\Lambda$ sufficiently large and ${\Rr_\Lambda \sim \Lambda^{-1/4}}$
sufficiently small so that $B_b(\Rh)$ contains the relevant portion of $\CU$.

Similarly, the tubular neighborhood $N_\Delta(\Rw)$ of the diagonal has
width ${\Rw > 0}$, meaning that points in $N_\Delta(\Rw)$ satisfy
${|\theta_2-\theta_1| < \Rw}$ for the parameters in Figure \ref{NbigU}.  A crucial step will be to fix the value of $\Rw$,
which must be large enough so that $N_\Delta(\Rw)$ contains the
piece of $\CU$ near the diagonal, but also small enough so that the local
analysis from Section \ref{LocalD} is applicable everywhere
in $N_\Delta(\Rw)$.  

According to the next lemma, both conditions on $N_\Delta(\Rw)$ can be
simultaneously satisfied once we set 
\begin{equation}
\Rw \,=\, \frac{\Rr_\Lambda \sqrt{2}}{m}\,,
\end{equation}
with a positive constant ${m > 0}$ defined geometrically by 
\begin{equation}\label{MinG}
m^2 \,=\, \min_{\theta\in S^1}\Big[||\dot{\gamma}(\theta)||^2\Big]\,.
\end{equation}
Since ${\gamma = \Pi\circ X}$ is an immersion, the minimum speed in
\eqref{MinG} is bounded away from zero, which is essential.  The
constant $\sqrt{2}$ is inessential and could be absorbed into the
definition of $m$.
\begin{lem}\label{BoundW}
For sufficiently large $\Lambda$ and ${\Rw = \Rr_\Lambda\sqrt{2}/m}$, the
tubular neighborhood $N_\Delta(\Rw)$ contains the diagonal component
of $\CU$. 
\end{lem}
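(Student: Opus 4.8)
The plan is to reduce the statement to a single uniform quadratic lower bound on the planar separation $\Delta x^2 + \Delta y^2 = ||\gamma(\theta_2)-\gamma(\theta_1)||^2$ near the diagonal, and then to match this against the definition \eqref{BigU} of $\CU$. First I would record the elementary fact that, since $\gamma = \Pi\circ X$ is an immersion of the compact circle $S^1$, the identity $\gamma(\phi+\eta)-\gamma(\phi) = \eta\int_0^1\dot{\gamma}(\phi+t\eta)\,dt$ exhibits $g(\phi,\eta) := ||\gamma(\phi+\eta)-\gamma(\phi)||^2/\eta^2$ (with $g(\phi,0) := ||\dot{\gamma}(\phi)||^2$) as a continuous function on $S^1\times\BR$, with $g(\phi,0)\ge m^2>0$ for the constant $m$ of \eqref{MinG}. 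By uniform continuity and compactness there is an $\eta_0>0$, depending only on $C$, with $g(\phi,\eta)\ge m^2/2$ for all $\phi$ and all $|\eta|\le\eta_0$; equivalently,
\begin{equation*}
\Delta x^2 + \Delta y^2 \;\ge\; \tfrac{m^2}{2}\,\dist(\theta_1,\theta_2)^2 \qquad\text{whenever}\qquad \dist(\theta_1,\theta_2)\le\eta_0\,,
\end{equation*}
where $\dist$ denotes circular distance on $S^1$. I would also shrink $\eta_0$, if necessary, below the minimal circular distance between the finitely many preimage pairs of the double points of $\gamma$, so that the region $\{\dist(\theta_1,\theta_2)\le\eta_0\}$ meets no crossing preimage; by definition this region then carries the diagonal component of $\CU$, and (being small) it is also a region on which the expansions of Section \ref{LocalD} apply.

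Next I would take $\Lambda$ large. Since $\Rr_\Lambda^2 = 4\RZ/\sqrt{\Lambda}\to 0$ by \eqref{eq:47}, we have $\Rw = \Rr_\Lambda\sqrt{2}/m\to 0$, so for all sufficiently large $\Lambda$ the inequality $\Rw<\eta_0$ holds. Then the inclusion itself is immediate: given any point $(\theta_1,\theta_2)$ of $\CU$ lying in the diagonal component, that is, with $\dist(\theta_1,\theta_2)\le\eta_0$, the defining condition of $\CU$ in \eqref{BigU} reads $\Delta x^2 + \Delta y^2 \le \Rr_\Lambda^2$; combining this with the quadratic lower bound above gives $\tfrac{m^2}{2}\dist(\theta_1,\theta_2)^2\le\Rr_\Lambda^2$, hence $\dist(\theta_1,\theta_2)\le\Rr_\Lambda\sqrt{2}/m = \Rw$. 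Thus $(\theta_1,\theta_2)\in N_\Delta(\Rw)$, which is the assertion of the lemma.

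The only real content is the uniform quadratic lower bound in the first paragraph; it is exactly here that the immersion hypothesis (minimum speed $m>0$, \eqref{MinG}) is used, together with a compactness argument to control the higher-order terms in $\eta$ uniformly in $\phi$. The one point requiring care is the numerology: the factor $1/2$ absorbed into the bound $g\ge m^2/2$ near the diagonal is precisely what is balanced by the $\sqrt{2}$ in the prescribed $\Rw = \Rr_\Lambda\sqrt{2}/m$, so I would keep explicit track of it. I would also note that shrinking $\eta_0$ (to exclude crossing preimages) and requiring $\Rw<\eta_0$ are compatible, since $\eta_0$ is fixed first in terms of the geometry of $C$, and $\Rw$ can then be made as small as desired by taking $\Lambda$ large.
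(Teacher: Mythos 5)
Your proof is correct and rests on the same key estimate as the paper's, namely the uniform quadratic lower bound $\Delta x^2+\Delta y^2\ge \tfrac{m^2}{2}\,\eta^2$ near the diagonal coming from the minimum speed $m$ of the immersion $\gamma$, with the same constant $\sqrt{2}/m$ emerging in $\Rw$. The only difference is cosmetic: you obtain the bound by uniform continuity of the difference quotient $g(\phi,\eta)$ on the compact set $S^1\times[-\eta_0,\eta_0]$, whereas the paper integrates the differential inequality $\frac{dF}{d\theta}>m^2$ for $F(\theta)=\frac{d}{d\theta}\big|\big|\gamma(\theta)-\gamma(\theta_0)\big|\big|^2$ twice.
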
  
\noindent
\textbf{Proof}
\vskip2pt
\noindent
The lemma says that, away from crossings, all points on the projection
$\Pi(C)$ which lie within the distance $\Rr_\Lambda$ of a given point
$\gamma(\theta)$ are contained within the image of the interval
$\left[\theta - \Rr_\Lambda \sqrt{2}/m,\,\theta + \Rr_\Lambda \sqrt{2}/m\right]$
under $\gamma$, for all values of $\theta$.  See Figure \ref{Branch}
for an illustration of this claim.  The minimum speed along
$\gamma$ naturally sets the scale of the required interval, which
decreases with increasing $m$.

\begin{figure}[h]
\begin{center}
\def\svgwidth{55mm}
\begingroup%
  \makeatletter%
  \providecommand\color[2][]{%
    \errmessage{(Inkscape) Color is used for the text in Inkscape, but the package 'color.sty' is not loaded}%
    \renewcommand\color[2][]{}%
  }%
  \providecommand\transparent[1]{%
    \errmessage{(Inkscape) Transparency is used (non-zero) for the text in Inkscape, but the package 'transparent.sty' is not loaded}%
    \renewcommand\transparent[1]{}%
  }%
  \providecommand\rotatebox[2]{#2}%
  \ifx\svgwidth\undefined%
    \setlength{\unitlength}{252.97203623bp}%
    \ifx\svgscale\undefined%
      \relax%
    \else%
      \setlength{\unitlength}{\unitlength * \real{\svgscale}}%
    \fi%
  \else%
    \setlength{\unitlength}{\svgwidth}%
  \fi%
  \global\let\svgwidth\undefined%
  \global\let\svgscale\undefined%
  \makeatother%
  \begin{picture}(1,0.70238512)%
    \put(0,0){\includegraphics[width=\unitlength]{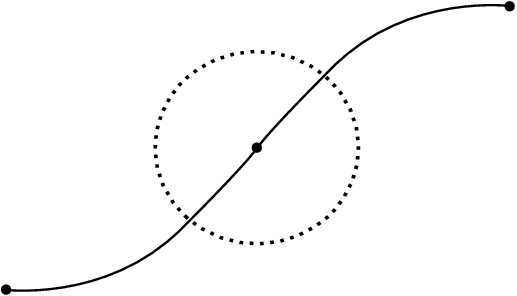}}%
    \put(0.77688627,0.63){\color[rgb]{0,0,0}\makebox(0,0)[lb]{\smash{$\gamma(\theta+\Rr_{\Lambda}\sqrt{2}/m)$}}}%
    \put(0.52654978,0.26){\color[rgb]{0,0,0}\makebox(0,0)[lb]{\smash{$\gamma(\theta)$}}}%
    \put(0.03355366,-0.07){\color[rgb]{0,0,0}\makebox(0,0)[lb]{\smash{$\gamma(\theta-\Rr_{\Lambda}\sqrt{2}/m)$}}}%
  \end{picture}%
\endgroup%
\vspace{5pt}
\caption{Points within the distance $\Rr_\Lambda$ of
  $\gamma(\theta)$.}\label{Branch} 
\end{center}
\end{figure} 

For any fixed ${\theta_0\in S^1}$, define the function 
\begin{equation}\label{DefF}
F(\theta)=\frac{d}{d\theta}\big|\big|\gamma(\theta)-\gamma(\theta_0)\big|\big|^2\,=\,
2 \,\big(\gamma(\theta) - \gamma(\theta_0)\big)\cdot\dot{\gamma}(\theta)\,.
\end{equation} 
Then ${F(\theta_0)=0}$, and the derivative of $F$ is 
\begin{equation}
\frac{dF}{d\theta}(\theta)\,=\,2\,||\dot{\gamma}||^2 \,+\, 2 \,\big(\gamma(\theta) - \gamma(\theta_0)\big)\cdot\ddot{\gamma}(\theta)\,.
\end{equation}
Since $\gamma$ is an immersion, ${||\dot{\gamma}|| \ge m}$ is bounded
below, and ${||\ddot{\gamma}|| < \infty}$ is bounded above.  So if
${||\gamma(\theta)-\gamma(\theta_0)|| \le \Rr_\Lambda}$ for
sufficiently large $\Lambda$ and hence sufficiently small $\Rr_\Lambda$, then
\begin{equation}\label{BoundF}
\frac{dF}{d\theta}(\theta) > m^2\,.
\end{equation}
Integrating the inequality in \eqref{BoundF}, we obtain 
\begin{equation}
F(\theta) \,>\, m^2 \left|\theta-\theta_0\right|.
\end{equation}
Integrating once more from the definition \eqref{DefF} of $F(\theta)$,
\begin{equation}
\big|\big|\gamma(\theta) - \gamma(\theta_0)\big|\big|^2 \,>\, \ha m^2\,
|\theta - \theta_0|^2\,,
\end{equation}
or
\begin{equation}
\big|\big|\gamma(\theta) - \gamma(\theta_0)\big|\big| \,>\, \frac{m}{\sqrt{2}}\,
|\theta - \theta_0|\,.
\end{equation}
Thus if ${||\gamma(\theta)-\gamma(\theta_0)||=\Rr_\Lambda}$, then
${|\theta-\theta_0| < \Rr_\Lambda\sqrt{2}/m}$, which is the required bound.\qquad$\square$

Because $\CU$ is contained in the union of $N_\Delta(\Rw)$ and ${\cup
  B_b(\Rh)}$, we have a relation between the 
corresponding self-linking integrals,
\begin{equation}
\left|\int_\CU \left(X^{} \times
  X^{}\right)^*\!\widehat\Gamma_{\hbar}^*\,\chi_\Lambda \,-\,
\int_{N_\Delta(\Rw)}\mskip-25mu\left(X^{} \times
  X^{}\right)^*\!\widehat\Gamma_{\hbar}^*\,\chi_\Lambda \,-\,
\sum_{b=1}^{2n} \int_{B_b(\Rh)}\mskip-25mu\left(X^{} \times
  X^{}\right)^*\!\widehat\Gamma_{\hbar}^*\,\chi_\Lambda\right| \,<\, M\,\delta\,.
\end{equation}
This inequality again follows from Lemma \ref{BoundLM}, because 
points contained in either $N_\Delta(\Rw)$ or  $B_b(\Rh)$ but
not in $\CU$ lie in $\CV$, where the magnitude of the self-linking
integrand is bounded by $\delta$.  So for sufficiently large
$\Lambda$, we just need to evaluate the self-linking integral over the
balls $B_b(\Rh)$ and the tubular neighborhood $N_\Delta(\Rw)$ in
Figure \ref{NbigU}.

\vspace{2pt}\noindent
\textbf{\small{Error analysis at a crossing}}
\vspace{2pt}

We first evaluate the self-linking integral over the ball $B_b(\Rh)$.
By the positivity condition on the heat form $\chi_\Lambda$, the
self-linking integrand vanishes in exactly half the
balls.  Reshuffling indices as necessary, we consider only those
$B_a(\Rh)$ for ${a=1,\ldots,n}$ 
on which ${\widehat\Delta z > 0}$ and the integrand is non-zero.

With malice aforethought, we have arranged that the image of
$B_a(\Rh)$ under the product map ${\gamma^-_a\times\gamma^+_a}$ lies
in the disc ${D_a(\Rh)\subset \BR^2}$, where we have control over the
geometry of $C$.  In particular, the map $\varphi_a$ in Figure
\ref{Quadri} restricts to a diffeomorphism from $B_a(\Rh)$ to a curvy 
quadrilateral region $\CQ$ about the origin in the $uv$-plane,
\begin{equation}\label{VarphiII}
\varphi_a^{} \,=\, \gamma^+_a(\theta_2) - \gamma^-_a(\theta_1) \,\equiv\,\left(u(\theta_1,\theta_2),
    v(\theta_1,\theta_2)\right)\,.
\end{equation}  
Our analysis will be performed using the $uv$-coordinates.  In these
coordinates, the self-linking integrand  
\eqref{PullChiDD} simplifies,
\begin{equation}\label{UVChi}
\begin{aligned}
&\frac{\Lambda}{2\pi\widehat\Delta z} \,\e{\!-\Lambda
  (\Delta x^2 + \Delta y^2)/2 \widehat\Delta z} \left[d\Delta
  x\^d\Delta y \,+ \ha \left(\Delta x \,d\Delta y -
    \Delta y\,d\Delta x\right)\!\^\frac{d\widehat\Delta
    z}{\widehat\Delta z}\right]\\
&\qquad\,=\,\frac{\Lambda}{2\pi\widehat\Delta z(u,v)} \,\e{\!-\Lambda
  (u^2 + v^2)/2 \widehat\Delta z(u,v)} \left[du\^dv \,+ \ha \left(u \,dv -
    v\,du\right)\!\^\frac{d\widehat\Delta
    z(u,v)}{\widehat\Delta z(u,v)}\right].
\end{aligned}
\end{equation}
Here $\Delta x$ and $\Delta y$ are identified with
the Cartesian coordinates $u$ and $v$, and $\widehat\Delta z$ is
considered to be a function of $(u,v)$.  All unknown functional
dependence of the self-linking integrand in \eqref{UVChi} is absorbed
into ${\widehat\Delta z(u,v)}$.

The integrand in \eqref{UVChi} is a sum of two terms, 
\begin{equation}
\Psi \,=\, \frac{\Lambda}{2\pi\widehat\Delta z(u,v)} \,\e{\!-\Lambda
  (u^2 + v^2)/2 \widehat\Delta z(u,v)} \, du\^dv\,,
\end{equation}
and 
\begin{equation}\label{BigXi}
\Xi \,=\, \frac{\Lambda}{4\pi\widehat\Delta z(u,v)} \,\e{\!-\Lambda
  (u^2 + v^2)/2 \widehat\Delta z(u,v)} \left(u \,dv -
    v\,du\right)\!\^\frac{d\widehat\Delta
    z(u,v)}{\widehat\Delta z(u,v)}\,.
\end{equation}
Hence after making the change-of-variables in $\CQ$,
\begin{equation}\label{TwoIs}
\int_{B_a(\Rh)} \left(X^{} \times
  X^{}\right)^*\!\widehat\Gamma_{\hbar}^*\,\chi_\Lambda \,=\, \int_\CQ
\Psi \,+\, \int_\CQ \Xi\,.
\end{equation}
The analysis of the two terms on the right in \eqref{TwoIs} is different.
Morally, $\Xi$ is higher-order in $u$ and $v$ so will be irrelevant
when $\Lambda$ is large and ${u,v \ll 1/\Lambda}$.  By
contrast, $\Psi$ is always relevant.  We analyze the integrals of
$\Psi$ and $\Xi$ over $\CQ$ in turn.

\begin{figure}[ht]
$$\begin{matrix}
&\def\svgwidth{45mm}
\begingroup%
  \makeatletter%
  \providecommand\color[2][]{%
    \errmessage{(Inkscape) Color is used for the text in Inkscape, but the package 'color.sty' is not loaded}%
    \renewcommand\color[2][]{}%
  }%
  \providecommand\transparent[1]{%
    \errmessage{(Inkscape) Transparency is used (non-zero) for the text in Inkscape, but the package 'transparent.sty' is not loaded}%
    \renewcommand\transparent[1]{}%
  }%
  \providecommand\rotatebox[2]{#2}%
  \ifx\svgwidth\undefined%
    \setlength{\unitlength}{192.575bp}%
    \ifx\svgscale\undefined%
      \relax%
    \else%
      \setlength{\unitlength}{\unitlength * \real{\svgscale}}%
    \fi%
  \else%
    \setlength{\unitlength}{\svgwidth}%
  \fi%
  \global\let\svgwidth\undefined%
  \global\let\svgscale\undefined%
  \makeatother%
  \begin{picture}(1,0.92238205)%
    \put(0,0){\includegraphics[width=\unitlength]{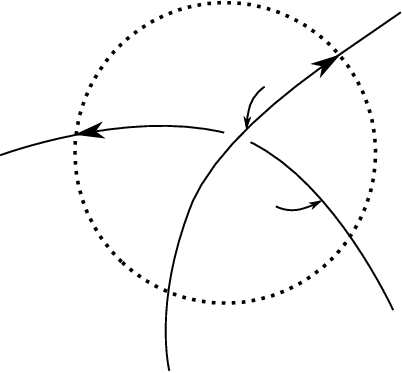}}%
    \put(0.63442225,0.75){\color[rgb]{0,0,0}\makebox(0,0)[lb]{\smash{$\gamma^{+}$}}}%
    \put(0.6,0.42){\color[rgb]{0,0,0}\makebox(0,0)[lb]{\smash{$\gamma^{-}$}}}%
  \end{picture}%
\endgroup%
\qquad & \qquad 
&\def\svgwidth{45mm}
\begingroup%
  \makeatletter%
  \providecommand\color[2][]{%
    \errmessage{(Inkscape) Color is used for the text in Inkscape, but the package 'color.sty' is not loaded}%
    \renewcommand\color[2][]{}%
  }%
  \providecommand\transparent[1]{%
    \errmessage{(Inkscape) Transparency is used (non-zero) for the text in Inkscape, but the package 'transparent.sty' is not loaded}%
    \renewcommand\transparent[1]{}%
  }%
  \providecommand\rotatebox[2]{#2}%
  \ifx\svgwidth\undefined%
    \setlength{\unitlength}{192.575bp}%
    \ifx\svgscale\undefined%
      \relax%
    \else%
      \setlength{\unitlength}{\unitlength * \real{\svgscale}}%
    \fi%
  \else%
    \setlength{\unitlength}{\svgwidth}%
  \fi%
  \global\let\svgwidth\undefined%
  \global\let\svgscale\undefined%
  \makeatother%
  \begin{picture}(1,0.92238205)%
    \put(0,0){\includegraphics[width=\unitlength]{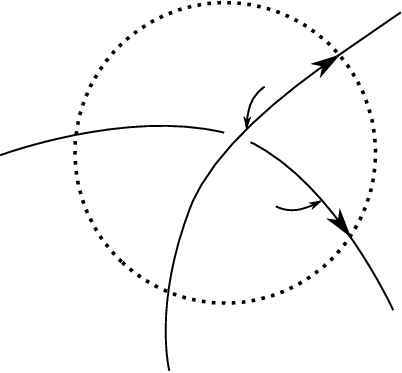}}%
    \put(0.63442225,0.75){\color[rgb]{0,0,0}\makebox(0,0)[lb]{\smash{$\gamma^{+}$}}}%
    \put(0.6,0.42){\color[rgb]{0,0,0}\makebox(0,0)[lb]{\smash{$\gamma^{-}$}}}%
  \end{picture}%
\endgroup%
\\
&(a.)\quad\deg(\varphi_a) = +1 \qquad & \qquad &(b.)\quad\deg(\varphi_a) = -1
\end{matrix}$$
\caption{Sign of the Jacobian for $\varphi_a$.}\label{Jacob} 
\end{figure}

\vspace{2pt}\noindent
\textbf{\small{Integral of $\Psi$}}
\vspace{2pt}

Explicitly, the integral of $\Psi$ is given by a kind of
nonlinear Gaussian,
\begin{equation}
\int_\CQ \Psi \,=\, \deg(\varphi_a) \int_\CQ \frac{\Lambda}{2\pi\widehat\Delta z(u,v)} \,\exp{\!\left[-\frac{\Lambda
  \left(u^2 + v^2\right)}{2\,\widehat\Delta z(u,v)}\right]} \,
du\,dv\,,\qquad\widehat\Delta z(u,v) > 0\,.
\end{equation}
Here ${\deg(\varphi_a) = \pm 1}$ depending upon whether the
diffeomorphism 
${\varphi_a:B_a(\Rh)\to\CQ}$ preserves or reverses orientation.
Equivalently, from the expression in \eqref{VarphiII}, the sign is
determined by the Jacobian in the expansion  
\begin{equation}
du\^dv \,=\,
-\left(\frac{d\gamma^-}{d\theta_1}\times\frac{d\gamma^+}{d\theta_2}\right)
d\theta_1\^d\theta_2\,.
\end{equation}
By inspection of Figure \ref{Jacob}, $\deg(\varphi_a)$ is exactly the local
writhe at the given crossing,
\begin{equation}
\deg(\varphi_a) \,=\, \Rw_a\,.
\end{equation}
To suppress pernicious signs for the remainder, we
assume ${\Rw_a=+1}$. 

For large $\Lambda$, we evaluate the integral of $\Psi$ over $\CQ$ in
two steps.
\begin{itemize}
\item[1.] We replace the unknown function $\widehat\Delta
  z(u,v)$ by the constant displacement $\Delta z_a$ at the crossing, with error
\begin{equation}\label{ItemI}
\left|\int_\CQ\frac{\Lambda}{2\pi\widehat\Delta
  z(u,v)}\exp{\left[-\frac{\Lambda\left(u^2+v^2\right)}{2\,\widehat\Delta
      z(u,v)}\right]} du\,
dv \,-\, \int_\CQ\frac{\Lambda}{2\pi\Delta
  z_a}\exp{\!\left[-\frac{\Lambda\left(u^2+v^2\right)}{2\,\Delta
      z_a}\right]}du\, dv \right|<\, M \delta\,.
\end{equation}
\item[2.] We extend the subsequent range of Gaussian integration from $\CQ$ to
  $\BR^2$ so that the Gaussian integral can be performed
  analytically, with error 
\begin{equation}\label{TrvGaus}
\left|\int_\CQ\frac{\Lambda}{2\pi\Delta
  z_a}\exp{\!\left[-\frac{\Lambda\left(u^2+v^2\right)}{2\,\Delta
      z_a}\right]} du\,dv - \int_{\BR^2}\frac{\Lambda}{2\pi\Delta
  z_a}\exp{\!\left[-\frac{\Lambda\left(u^2+v^2\right)}{2\,\Delta
      z_a}\right]} du\,dv \right|<\,\delta\,.
\end{equation}
\end{itemize}
Of these steps, only the first is non-trivial.  For the second, because the
Gaussian integral over $\BR^2$ is normalized to unity independent of
$\Lambda$, we can always choose $\Lambda$ sufficiently large so that
\begin{align}\label{eq:71}
\int_{\BR^2-\CQ}\frac{\Lambda}{2\pi\Delta
  z_a}\exp{\!\left[-\frac{\Lambda\left(u^2+v^2\right)}{2\,\Delta
      z_a}\right]}du\, dv\,<\,\delta\,.
\end{align}
Informally, we performed both these steps in arriving at \eqref{GausR2}.

For the first step, we use heavily the positive function
$\RR_a(\Lambda)$ which satisfies 
\begin{equation}\label{BigRLamII}
\frac{\Lambda}{2\pi\Delta z_a}\exp{\!\left[-\frac{\Lambda\,
      \RR_a(\Lambda)^2}{2\Delta z_a}\right]} \,=\,\delta\,,
\end{equation}
and vanishes monotonically as ${\Lambda\to\infty}$.  The function
$\RR_a(\Lambda)$ sets the minimum distance from the origin for
which the Gaussian integrand in \eqref{ItemI} becomes negligible.

To be on the safe side in our bounds, we will have to work with a
slightly larger distance ${\RR_a(\Lambda/2) > \RR_a(\Lambda)}$.
Let ${B_0\equiv B_0(\RR_a(\Lambda/2))}$ be 
the ball of radius $\RR_a(\Lambda/2)$ which is centered at the origin
in $\CQ$,
\begin{equation}\label{BZero}
B_0:\quad u^2+v^2 \,<\, \RR_a^2(\Lambda/2)\,.
\end{equation}
We shall prove that when $(u,v)$ lies in $B_0$, the difference
between the nonlinear and the usual Gaussian in \eqref{ItemI} is small,
\begin{equation}\label{SmallBibB}
\left|\int_{B_0}\frac{\Lambda}{2\pi\widehat\Delta
  z(u,v)}\exp{\!\left[-\frac{\Lambda\left(u^2+v^2\right)}{2\widehat\Delta
      z(u,v)}\right]} du\,dv \,-\,
\int_{B_0}\frac{\Lambda}{2\pi\Delta
  z_a}\exp{\!\left[-\frac{\Lambda\left(u^2+v^2\right)}{2\Delta
      z_a}\right]} du\,dv\right| \,<\, \delta\,.
\end{equation}
Otherwise, when $(u,v)$ lies outside $B_0$ in $\CQ$, we show that both
integrals are separately small, with 
\begin{equation}\label{NonBGauss}
\int_{\CQ-B_0}\frac{\Lambda}{2\pi\widehat\Delta
  z(u,v)}\exp{\!\left[-\frac{\Lambda\left(u^2+v^2\right)}{2\widehat\Delta
      z(u,v)}\right]} du\, dv \,< M\,\delta\,,
\end{equation}
and
\begin{equation}\label{BGauss}
\int_{\CQ-B_0}\frac{\Lambda}{2\pi\Delta
  z_a}\exp{\!\left[-\frac{\Lambda\left(u^2+v^2\right)}{2\Delta
  z_a}\right]} du\,dv\,<\, M\,\delta\,.
\end{equation}
Thus the difference must also be small in ${\CQ - B_0}$.  This trick
is the engine of asymptotic analysis.  See Ch.\,$6$ in
\cite{Bender:1999} for further background on this idea.

We begin by establishing some easy bounds when  $(u,v)$ lies inside
the ball ${B_0 \subset \CQ}$.  From the definition of
$\RR_a(\Lambda/2)$,
\begin{equation}
u^2 + v^2 < \RR_a^2(\Lambda/2) \,=\, \frac{4\Delta z_a}{\Lambda}\ln\frac {\Lambda}{4\pi\Delta z_a\delta}\,,
\end{equation}
so the argument of the Gaussian is bounded by 
\begin{equation}\label{eq:65}
\frac{\Lambda (u^2+v^2)}{2\Delta z_a}<2\ln\frac{\Lambda}{4\pi\Delta
  z_a\delta}\,.
\end{equation}
On the other hand, consider the difference ${\widehat\Delta z(u,v) -
\Delta z_a}$.  As a function of $(u,v)$, the difference vanishes at
${u=v=0}$ and is differentiable there, so  
\begin{equation}
\big|\widehat\Delta z(u,v) -
\Delta z_a\big| \,<\, M \sqrt{u^2 + v^2} \,<\, M \, \RR_a(\Lambda/2)\,,
\end{equation}
for some constant $M$ depending on $C$.  Immediately, since
$\RR_a^2(\Lambda/2)$ scales like ${\ln\Lambda/\Lambda}$, the relative
fluctuations in height about $\Delta z_a$ satisfy 
\begin{align}\label{eq:66}
\frac{\big|\widehat\Delta z(u,v)-\Delta z_a\big|}{\Delta z_a}\,<\, \frac{M\,\RR_a(\Lambda/2)}{\Delta z_a}\,\sim\,\left(\frac{\ln\Lambda}{\Lambda}\right)^{1/2}\,.
\end{align}
Directly from \eqref{eq:65} and \eqref{eq:66},
\begin{align}\label{eq:67}
\frac{\Lambda (u^2+v^2)}{2\,\Delta z_a}\cdot\frac{\big|\widehat\Delta
  z(u,v)-\Delta z_a\big|}{\Delta z_a}\,<\,
M\,\frac{(\ln\Lambda)^{3/2}}{\Lambda^{1/2}}\,. 
\end{align}
The constant $M$ in \eqref{eq:67} is not necessarily the same  as the
constant $M$ in \eqref{eq:66}!

Given the relative similarity between the integrands in
\eqref{SmallBibB}, we consider their ratio 
\begin{equation}
q \,=\, \frac{\Lambda}{2\pi\widehat\Delta
        z(u,v)}
      \exp{\!\left[-\frac{\Lambda \left(u^2+v^2\right)}{2\widehat\Delta
          z(u,v)}\right]}\Biggr/\frac{\Lambda}{2\pi\Delta
        z_a}\exp{\!\left[-\frac{\Lambda \left(u^2+v^2\right)}{2 \Delta
          z_a}\right]}\,.
\end{equation}
With some algebra, this ratio can be recast as 
\begin{equation}
\begin{aligned}
q\,&=\,\frac{\Delta z_a}{\widehat\Delta z(u,v)}\exp{\!\left[\frac{\Lambda\left(u^2+v^2\right)}{2}\cdot\frac{\widehat\Delta z(u,v)-\Delta z_a}{\widehat\Delta z(u,v)\Delta z_a}\right]}\,,\\
&=\left[1\,+\,\frac{\widehat\Delta z(u,v)-\Delta z_a}{\Delta
    z_a}\right]^{-1}\cdot\exp{\!\left[\frac{\Lambda\left(u^2+v^2\right)}{2\Delta z_a}\cdot\frac{\widehat\Delta z(u,v)-\Delta z_a}{\Delta z_a}\cdot\frac{\Delta z_a}{\widehat\Delta z(u,v)}\right]}\,.
\end{aligned}
\end{equation}
By the estimates in \eqref{eq:66} and \eqref{eq:67}, the prefactor in
$q$ approaches unity and the argument of the exponential vanishes as
${\Lambda\to\infty}$.  Hence we can choose $\Lambda$ sufficiently large so that 
\begin{equation}\label{eq:68}
|q-1|\,<\,\delta\,.
\end{equation}
With this control over the fractional error, the difference between
the nonlinear and the usual Gaussian in
\eqref{SmallBibB} is bounded by 
\begin{equation}\label{estimate1}
\begin{aligned}
&\int_{B_0}\left|\frac{\Lambda}{2\pi\widehat\Delta
  z(u,v)}\exp{\!\left[-\frac{\Lambda\left(u^2+v^2\right)}{2\widehat\Delta
  z(u,v)}\right]}\,-\,\frac{\Lambda}{2\pi\Delta
  z_a}\exp{\!\left[-\frac{\Lambda\left(u^2+v^2\right)}{2\Delta
  z_a}\right]}\right| du\, dv\\
&\qquad\qquad\le\int_{B_0}\frac{\Lambda\,|q-1|}{2\pi\Delta
  z_a}\exp{\!\left[-\frac{\Lambda\left(u^2+v^2\right)}{2\Delta
      z_a}\right]} du\, dv\\
&\qquad\qquad<\,\int_{B_0}\frac{\Lambda\,\delta}{2\pi\Delta
  z_a}\exp{\!\left[-\frac{\Lambda\left(u^2+v^2\right)}{2\Delta
      z_a}\right]} du\, dv \,<\, \delta\,.
\end{aligned}
\end{equation}

We are left to examine what happens when $(u,v)$ lies outside the ball
${B_0\subset \CQ}$ of radius ${\RR_a(\Lambda/2)}$, meaning 
\begin{equation}
u^2+v^2\ge\RR_a^2(\Lambda/2)\,.
\end{equation}
To start, the bound on the Gaussian in \eqref{BGauss} is trivial
because the integrand is bounded by $\delta$ for all points $(u,v)$
outside the ball of radius $\RR_a(\Lambda)$, and ${\RR_a(\Lambda/2) >
  \RR_a(\Lambda)}$.  So the real task is to establish the bound for
the nonlinear Gaussian in \eqref{NonBGauss}.

Consider the following function on $\CQ$,
\begin{equation}\label{NewLam}
\widehat\Lambda(u,v) \,=\, \Lambda \cdot \frac{\Delta z_a}{\widehat\Delta z(u,v)}\,.
\end{equation}
Conceptually, we interpret $\widehat\Lambda$ as a fluctuating,
position-dependent version of the parameter $\Lambda$, so that the 
width of the nonlinear Gaussian varies from point-to-point on $\CQ$.
By the estimate in \eqref{eq:41}, the relative fluctuation factor is bounded
from below everywhere on $\CQ$ by 
\begin{equation}
\ha \,<\, 1 \,-\, \Rc \,<\, \frac{\Delta z_a}{\widehat\Delta z(u,v)}\,.
\end{equation}
Consequently, by the definition of $\widehat\Lambda$,
\begin{equation}\label{BHatLam}
\frac{\Lambda}{2} \,<\, \widehat\Lambda(u,v)\,.
\end{equation}

Associated to the local parameter $\widehat\Lambda(u,v)$ we have
a local scale $\RR_a(\widehat\Lambda(u,v))$, also a 
function of $u$ and $v$.  Since $\RR_a$ is monotonically decreasing
for large $\Lambda$, the lower bound on $\widehat\Lambda$ in
\eqref{BHatLam} means that 
\begin{equation}
\RR_a^2(\widehat\Lambda(u,v))\,<\,\RR_a^2(\Lambda/2) \,<\, u^2+v^2\,.
\end{equation}
Thus, again by the definition of $\RR_a(\Lambda)$ in \eqref{BigRLamII},
\begin{align}\label{eq:64}
\frac{\widehat\Lambda}{2\pi\Delta z_a}\exp{\!\left[-\frac{\widehat\Lambda\left(u^2+v^2\right)}{2\,\Delta z_a}\right]}\,<\,\delta\,,
\end{align}
or by substitution from \eqref{NewLam},
\begin{equation}\label{NonLBd}
\frac{\Lambda}{2\pi\widehat\Delta z(u,v)}\exp{\!\left[-\frac{\Lambda\left(u^2+v^2\right)}{2\,\widehat\Delta z(u,v)}\right]}\,<\,\delta\,.
\end{equation} 

The bound on the nonlinear Gaussian in \eqref{NonLBd} is exactly what
we need to control the integral over ${\CQ - B_0}$, so that 
\begin{equation}\label{estimate2}
\int_{\CQ-B_0}\frac{\Lambda}{2\pi\widehat\Delta
  z(u,v)}\exp{\!\left[-\frac{\Lambda\left(u^2+v^2\right)}{2\widehat\Delta
      z(u,v)}\right]} du\, dv\,<\,M\,\delta\,.
\end{equation}
Combining with the trivial bound in \eqref{TrvGaus}, we deduce that
the desired integral of $\Psi$ over $\CQ$ can be well-approximated for
large $\Lambda$ by the
naive Gaussian integral,
\begin{equation}\label{eq:69}
\left|\int_\CQ\Psi \,-\, \deg(\varphi_a) \int_{\BR^2}\frac{\Lambda}{2\pi\Delta
  z_a}\exp{\!\left[-\frac{\Lambda\left(u^2+v^2\right)}{2\Delta
  z_a}\right]} du\, dv\right|
  < \, M\,\delta\,.
\end{equation}

\vspace{2pt}\noindent
\textbf{\small{Integral of $\Xi$}}
\vspace{2pt}

We are not finished with our error analysis at the crossing, because
we still must consider the integral of $\Xi$ over $\CQ$ in
\eqref{TwoIs}.  We will show that the contribution of $\Xi$ is negligible for large $\Lambda$,
\begin{equation}\label{SmallXi}
\left|\int_\CQ\Xi\,\right| \,<\, M\,\delta\,.
\end{equation}
Explicitly, from the formula in \eqref{BigXi}, the integral of $\Xi$ is given in the $(u,v)$-coordinates
by 
\begin{equation}\label{IntXi}
\int_\CQ \Xi \,=\, -\int_\CQ\frac{\Lambda}{4\pi\widehat\Delta z(u,v)^2} \,\e{\!-\Lambda
  (u^2 + v^2)/2 \widehat\Delta z(u,v)} \left(u
  \frac{\partial\widehat\Delta z}{\partial u} \,+\, v
  \frac{\partial\widehat\Delta z}{\partial v}\right) du \, dv\,.
\end{equation}

Again, we consider the cases that $(u,v)$ lies inside the ball $B_0$
and outside the ball $B_0$ separately.  When $(u,v)$ lies outside the
ball ${B_0\subset\CQ}$, then by the definition of $\RR_a(\Lambda)$
in \eqref{BigRLamII}, 
\begin{equation}\label{BdXi}
\left|\frac{\Lambda}{4\pi\widehat\Delta z(u,v)^2} \,\e{\!-\Lambda
  (u^2 + v^2)/2 \widehat\Delta z(u,v)} \left(u
  \frac{\partial\widehat\Delta z}{\partial u} \,+\, v
  \frac{\partial\widehat\Delta z}{\partial v}\right)\right| <\, M\,\delta\,.
\end{equation}
Here we note that the extra factors of ${1/\widehat\Delta z}$ and
${\left(u\,\partial/\partial u + v\,\partial/\partial
    v\right)\widehat\Delta z}$ in \eqref{BdXi} are smooth functions
bounded independently of $\Lambda$ on $\CQ$.  These functions do not
alter the bound by $\delta$ but are absorbed into the constant $M$.

Otherwise, for points inside $B_0$, we have a bound 
\begin{equation}\label{eq:83}
\int_{B_0} \frac{\Lambda}{4\pi\widehat\Delta
  z(u,v)^2} \exp{\!\left[-\frac{\Lambda\left(u^2+v^2\right)}{2\widehat\Delta
      z(u,v)}\right]} du\,dv\, < M\,,
\end{equation}
which follows by the same arguments used to produce the estimate in
\eqref{estimate1}.  Also, since ${\left(u\,\partial/\partial u + v\,\partial/\partial
    v\right)\widehat\Delta z}$ is bounded in $B_0$ and ${|u|,|v| \le
  \RR_a(\Lambda/2) \to 0}$ as $\Lambda\to\infty$, we can always choose
$\Lambda$ so that 
\begin{equation}\label{Smallu}
\left|u
  \frac{\partial\widehat\Delta z}{\partial u} \,+\, v
  \frac{\partial\widehat\Delta z}{\partial v}\right|
<\,\delta\,,
\end{equation} 
for all $(u,v)$ in $B_0$.  Combining the bounds in \eqref{BdXi},
\eqref{eq:83}, and \eqref{Smallu} for outside and inside $B_0$, we
obtain the conclusion in \eqref{SmallXi}.  

In summary, these bounds establish the informal localization formula
in \eqref{LocTB} for any crossing of $\Pi(C)$.\qquad$\square$

\vspace{2pt}\noindent
\textbf{\small{Error analysis near the diagonal}}
\vspace{2pt}

Our final goal is to evaluate the self-linking integral over the
tubular neighborhood $N_\Delta(\Rw)$ of the diagonal ${\Delta\subset
  T^2}$, where ${\Rw\sim\Lambda^{-1/4}}$ is the width set in Lemma
\ref{BoundW}.  For the informal localization computation in
\eqref{RotI}, we used the leading term in the Taylor expansion
of the self-linking integrand near the
diagonal to approximate the integral.  Depending upon the value of the
parameter $\hbar$, we abbreviate this leading term by 
\begin{equation}\label{Case1}
\Phi_\hbar \,\buildrel{\hbar\neq 1}\over=\, -\sgn(\eta)\,\frac{\Lambda\left(\dot{\gamma}\times\ddot{\gamma}\right)}{4\pi\,
  |1-\hbar|\,
  |\gamma\times\dot{\gamma}|}\,\exp{\!\left[-\frac{\Lambda\,
      ||\dot{\gamma}||^2\,|\eta|}{2\,|1-\hbar|\,
      |\gamma\times\dot{\gamma}|}\right]}\,d\phi\^d\eta\,,
\end{equation}
or
\begin{equation}\label{Case2}
\Phi_\hbar \,\buildrel{\hbar=1}\over =\,-\frac{3\,\Lambda}{2\pi
  \eta^2} \exp{\!\left[-\frac{3\,\Lambda\,||\dot{\gamma}||^2}{
      \left|\dot{\gamma}\times\ddot{\gamma}\right|}\frac{1}{|\eta|}\right]}\,d\phi\^d\eta\,.
\end{equation}
In both cases we assume that the local positivity condition ${\widehat\Delta z
  > 0}$ is satisfied, as in \eqref{Posit}.  Otherwise, ${\Phi_\hbar
  \equiv 0}$.  

To justify our localization computation, we must
demonstrate for sufficiently large $\Lambda$ the bound  
\begin{equation}\label{DiagBd}
\left|\int_{N_\Delta(\Rw)}\mskip-25mu\left(X^{} \times
  X^{}\right)^*\!\widehat\Gamma_{\hbar}^*\,\chi_\Lambda \,-\,
\int_{N_\Delta(\Rw)}\mskip-15mu\Phi_\hbar\,\right| <M\,\delta\,.
\end{equation}
The behavior of $\Phi_\hbar$ for small $\eta$ depends very much on
whether $\hbar$ is equal to one or not, so we treat the cases in
\eqref{Case1} and \eqref{Case2} separately.  Because the generic case
${\hbar\neq 1}$ is the more involved, and the more interesting, we
begin with it.

\vspace{2pt}\noindent
\textbf{{\small Generic case ${\hbar\neq 1}$}}
\vspace{2pt}

In principle, the error in the leading approximation to
$\left(X^{} \times
  X^{}\right)^*\!\widehat\Gamma_{\hbar}^*\,\chi_\Lambda$ for small
$\eta$ is controlled by the magnitude of the next-order term in the
Taylor expansion.  We will need this correction term for our analysis.
Briefly, by the same computations leading to \eqref{HeatArg} in
Section \ref{LocalD}, the argument of the heat kernel admits the
second-order expansion 
\begin{equation}\label{HeatArgII}
\frac{\Delta x^2 + \Delta y^2}{2\,\widehat\Delta z}
\,\buildrel{\hbar\neq 1}\over =\,
-\frac{||\dot{\gamma}||^2 \,\eta}{2 \left(1-\hbar\right)
  \left(\gamma\times\dot{\gamma}\right)}\left[1 \,+
  \left(\frac{\dot{\gamma}\cdot\ddot{\gamma}}{||\dot{\gamma}||^2} -
    \ha \frac{\gamma\times\ddot{\gamma}}{\gamma\times\dot{\gamma}}\right)\eta
  \,+\, \CO\big(\eta^2\big)\right],
\end{equation}
where ${\dot{\gamma}\cdot\ddot{\gamma} \equiv \dot{x}\,\ddot{x} +
  \dot{y}\,\ddot{y}}$.  Similarly for the pullback of the heat form
$\chi_\Lambda$ itself,
\begin{equation}\label{SecondOrd}
\begin{aligned}
&\left(X^{} \times
  X^{}\right)^*\!\widehat\Gamma_{\hbar}^*\,\chi_\Lambda
\,\buildrel{\hbar\neq 1}\over =\,\\
&-\frac{\Lambda\,\sgn(\eta)\,d\phi\^d\eta}{4\pi\,
  |1-\hbar|\,
  |\gamma\times\dot{\gamma}|}
\left[\left(\dot{\gamma}\times\ddot{\gamma}\right) \,+\, \left(\ha
  \left(\dot{\gamma}\times\dddot{\gamma}\right) -
  \frac{\left(\gamma\times\ddot{\gamma}\right)\left(\dot{\gamma}\times\ddot{\gamma}\right)}{\left(\gamma\times\dot{\gamma}\right)}\right)\eta \,+\,
  \CO\big(\eta^2\big)\right]\times\\
&\times\,\exp{\!\left[-\frac{\Lambda\,||\dot{\gamma}||^2 \,|\eta|}{2
      \left|1-\hbar\right|\left|\gamma\times\dot{\gamma}\right|}\left(1 \,+
  \left(\frac{\dot{\gamma}\cdot\ddot{\gamma}}{||\dot{\gamma}||^2} -
    \ha \frac{\gamma\times\ddot{\gamma}}{\gamma\times\dot{\gamma}}\right)\eta
  \,+\, \CO\big(\eta^2\big)
    \right)\right]}\,.
\end{aligned}
\end{equation}
We omit the computation leading to \eqref{SecondOrd}, since the
details of this formula will not be so important.  The expansion merely
confirms that both the prefactor and the argument of the exponential
for $\Phi_\hbar$ in \eqref{Case1} receive further corrections at the
next order in $\eta$, as determined by the geometry of the
projection $\Pi(C)$.

Validity of the leading approximation $\Phi_\hbar$ requires that the
correction terms in \eqref{SecondOrd} be small.  At least informally,
for the argument of the heat kernel in \eqref{HeatArgII} we require 
\begin{equation}\label{SmallC}
\left|\,\left(\frac{\dot{\gamma}\cdot\ddot{\gamma}}{||\dot{\gamma}||^2}
    - \ha
    \frac{\gamma\times\ddot{\gamma}}{\gamma\times\dot{\gamma}}\right)\eta\,\right|\ll
1\,.
\end{equation}
By assumption, ${|\eta| < \Rw \sim \Lambda^{-1/4}}$ is always small on
$N_\Delta(\Rw)$, and ${||\dot{\gamma}||^2 > 0}$ is bounded from below.
The condition in \eqref{SmallC} is therefore only violated at points where
${\gamma\times\dot{\gamma} = 0}$.  At these points, the Taylor
expansion in \eqref{SecondOrd} breaks down.

Despite the failure of the Taylor expansion at points where
${\gamma\times\dot{\gamma}=0}$, these points cause no difficulty.
Recall that points where ${\gamma\times\dot{\gamma} = 0}$ correspond
to critical points of the height function $z(\theta)$ on $C$.  By the Morse
assumption which follows Lemma \ref{FundLM}, 
the function ${\left(\gamma\times\dot{\gamma}\right)(\phi)}$ vanishes  
at only a finite number of isolated critical points
$\{\phi_1,\ldots,\phi_{2k}\}$ on $S^1$.   According to the
analysis at the end of Section \ref{Proof}, these points
are precisely the endpoints of the disjoint collection of intervals
${S^1_\pm \cap \left[T^2-\Delta(\varepsilon)\right]}$ in
\eqref{SSigns}.  At the endpoints, $\Phi_\hbar$ vanishes, and the
exact integrand $\left(X^{} \times
  X^{}\right)^*\!\widehat\Gamma_{\hbar}^*\,\chi_\Lambda$ in
\eqref{PullChiDD} is exponentially small (since $\widehat\Delta z$ is small).
Consequently, the troublesome points for the Taylor 
expansion can just be removed from the domain
of integration, with negligible error.

\begin{figure}[t]
\begin{center}
\includegraphics[scale=0.45]{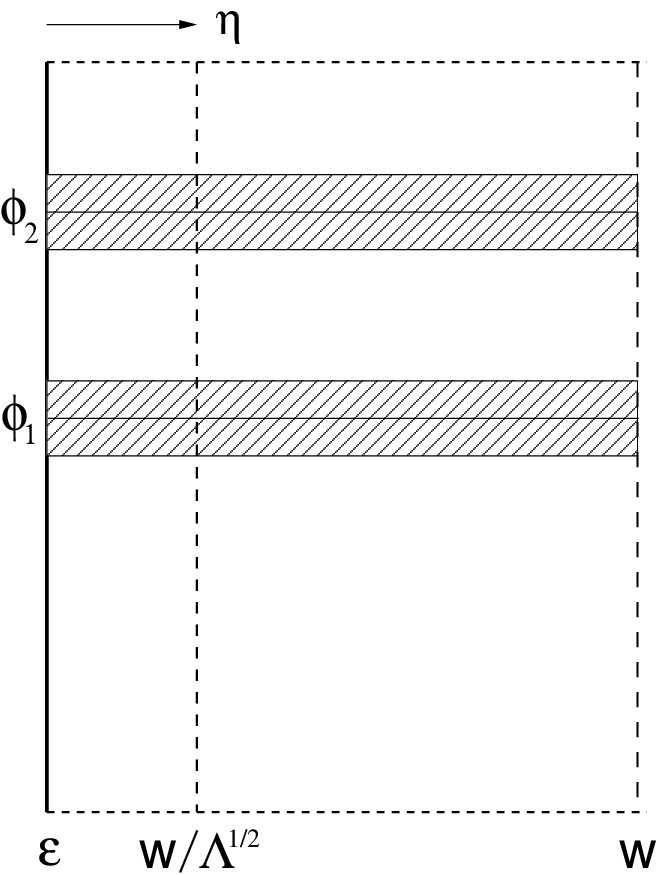}
\caption{Domain $N_\Delta(\Rw;\ell)$ on which
  ${|\gamma\times\dot{\gamma}|>m>0}$.}\label{Edge}
\end{center}
\end{figure}

Technically, about each critical point $\phi_c$,
\begin{equation}
\left(\gamma\times\dot{\gamma}\right)(\phi_c) \,=\, 0\,,\qquad\qquad
c= 1,\,\ldots,\,2k\,,
\end{equation}
we consider a small neighborhood $(\phi_c - \ell, \phi_c + \ell)$ with fixed
width ${\ell > 0}$.  Let ${\textrm{I}_c(\ell) \subset N_\Delta(\Rw)}$ be the
corresponding closed strip
\begin{equation}\label{Strip}
\textrm{I}_c(\ell) \,=\, \big\{ (\phi,\eta) \,\big|\, |\phi - \phi_c|
\le \ell,\, |\eta| \le \Rw\big\}\,,
\end{equation} 
and set 
\begin{equation}
N_\Delta(\Rw;\ell) \,=\, N_\Delta(\Rw) \,-\, \bigcup_{c=1}^{2k} \,
\textrm{I}_c(\ell)\,.
\end{equation}
See Figure \ref{Edge} for a sketch of $N_\Delta(\Rw;\ell)$ near one
boundary of the cylinder ${T^2 - \Delta(\varepsilon)}$.  The 
shaded regions indicate the strips of width $\ell$ which have been
excised about a pair of zeroes $\phi_1$ and $\phi_2$ of the function
$\gamma\times\dot{\gamma}$.

We choose the width ${\ell>0}$ of each strip to be small enough 
so that 
\begin{equation}\label{WidI}
\left|\sum_{c=1}^{2k}\,\int_{\textrm{I}_c(\ell)} \left(X^{} \times
  X^{}\right)^*\!\widehat\Gamma_{\hbar}^*\,\chi_\Lambda \right| \,<\,
\delta\,,\qquad\qquad
\left|\sum_{c=1}^{2k}\,\int_{\textrm{I}_c(\ell)}\Phi_\hbar\,\right|
\,<\, \delta\,.
\end{equation}
Both integrands in \eqref{WidI} are bounded at the
points $\phi_c$, so the integrals over $\textrm{I}_c(\ell)$ can be
made as small as desired by the choice of $\ell$.  Moreover, both
integrands are decreasing functions of
$\Lambda$ for sufficiently large $\Lambda$, so the width $\ell$ can be chosen
independently of $\Lambda$, our crucial requirement.

By definition, the function $(\gamma\times\dot{\gamma})(\phi)$ is now
bounded away from zero everywhere on the new domain $N_\Delta(\Rw;\ell)$,
\begin{equation}\label{BdTayL}
\left|\gamma\times\dot{\gamma}\right| \,>\,
m \,>\, 0 \quad\hbox{ on }\quad N_\Delta(\Rw;\ell)\,.
\end{equation}
Here $m$ is a constant which depends upon the curve $C$ and the parameter
${\ell}$, but not on $\Lambda$.  Because the respective contributions
\eqref{WidI} from the excised strips are small by
assumption, we are free to replace $N_\Delta(\Rw)$ by
$N_\Delta(\Rw;\ell)$ in the inequality \eqref{DiagBd} to be proven.
On the other hand, due to the lower bound in \eqref{BdTayL}, we will also have
uniform control of error terms such as \eqref{SmallC} in the
Taylor approximation on $N_\Delta(\Rw;\ell)$.

The remainder of the discussion proceeds in rough correspondence to the
asymptotic analysis near a crossing.  By analogy to the ball $B_0$ in
\eqref{BZero}, we introduce a smaller  
tubular neighborhood ${N_0 \subset N_\Delta(\Rw;\ell)}$ defined by
\begin{equation}\label{NZero}
N_0:\quad |\eta| \,<\, \frac{\Rw}{\sqrt{\Lambda}} \,\sim\, \Lambda^{-3/4}\,.
\end{equation}
We indicate the coaxial configuration schematically in Figure \ref{Edge}.
For points inside the small tube $N_0$, we will show that the
difference between the integrals of $\left(X^{} \times
  X^{}\right)^*\!\widehat\Gamma_{\hbar}^*\,\chi_\Lambda$ and $\Phi_\hbar$ is small,
\begin{equation}\label{NNought}
\left|\int_{N_0}\mskip-10mu\left(X^{} \times
  X^{}\right)^*\!\widehat\Gamma_{\hbar}^*\,\chi_\Lambda \,-\,
\int_{N_0}\mskip-5mu\Phi_\hbar\,\right| <M\,\delta\,.
\end{equation}
For points outside $N_0$ but inside $N_\Delta(\Rw;\ell)$, we
will show that both integrals are separately small, with 
\begin{equation}\label{None}
\left|\int_{N_\Delta(\Rw;\ell) - N_0} \left(X^{} \times
  X^{}\right)^*\!\widehat\Gamma_{\hbar}^*\,\chi_\Lambda\,\right| \,<\, M\,\delta\,,
\end{equation}
and
\begin{equation}\label{None2}
\left|\int_{N_\Delta(\Rw;\ell)-N_0} \Phi_\hbar \,\right| \,<\, M\,\delta\,.
\end{equation}
The extra factor of $1/\sqrt{\Lambda}$ in the definition of $N_0$ is simply
what is needed to ensure the inequality in \eqref{NNought}.

We first consider the points inside $N_0$.  From the second-order
expansion in \eqref{SecondOrd}, the ratio of the 
self-linking integrand to its approximation $\Phi_\hbar$ satisfies 
\begin{equation}\label{Smallq2}
\begin{aligned}
q \,=\, \frac{\left(X^{} \times
  X^{}\right)^*\!\widehat\Gamma_{\hbar}^*\,\chi_\Lambda}{\Phi_\hbar}
\,&=\, \left[1 \,+ \left(\ha
  \frac{\dot{\gamma}\times\dddot{\gamma}}{\dot{\gamma}\times\ddot{\gamma}}
  - \frac{\gamma\times\ddot{\gamma}}{\gamma\times\dot{\gamma}}\right)\eta\,+\, \CO\big(\eta^2\big)\right]\times\\
&\times\,\exp{\!\left[-\frac{\Lambda\,||\dot{\gamma}||^2 \,|\eta|}{2
      \left|1-\hbar\right|\left|\gamma\times\dot{\gamma}\right|}\left(\frac{\dot{\gamma}\cdot\ddot{\gamma}}{||\dot{\gamma}||^2} -
    \ha
    \frac{\gamma\times\ddot{\gamma}}{\gamma\times\dot{\gamma}}\right) \eta
  \,+\, \CO\big(\eta^3\big)\right]}\,.
\end{aligned}
\end{equation}
Again, to deal with the term ${1/(\dot{\gamma}\times\ddot{\gamma})}$ in
the prefactor of \eqref{Smallq2}, we
assume that any zeroes of $\dot{\gamma}\times\ddot{\gamma}$ are isolated, and we
remove small neighborhoods as necessary about those zeroes so
that the functions which multiply $\eta$ in both the prefactor and the
argument of the exponential in \eqref{Smallq2} are bounded,
independently of $\Lambda$.  

For any
point in the small tube $N_0$, the
argument of the exponential in \eqref{Smallq2} is bounded in magnitude
by  
\begin{equation}\label{VNzero}
\frac{\Lambda\,||\dot{\gamma}||^2 \,\eta^2}{2
      \left|1-\hbar\right|\left|\gamma\times\dot{\gamma}\right|}\cdot\left|\frac{\dot{\gamma}\cdot\ddot{\gamma}}{||\dot{\gamma}||^2} -
    \ha
    \frac{\gamma\times\ddot{\gamma}}{\gamma\times\dot{\gamma}}\right|
< \Lambda\, M \, \eta^2 \,<\, M \, \Rw^2\,,
\end{equation}
where we apply the conditions  ${|\gamma\times\dot{\gamma}|>m>0}$ as
well as ${|\eta| < \Rw /\sqrt{\Lambda}}$ in $N_0$.  Because
${\Rw \sim \Lambda^{-1/4}}$, this inequality means that
the argument of the exponential vanishes, and the prefactor
approaches unity, in the limit ${\Lambda\to\infty}$.  Thus for
sufficiently large $\Lambda$, the fractional error is small,
\begin{equation}
\left|q-1\right| \,<\, \delta\,.
\end{equation}
By the same idea in \eqref{estimate1},
\begin{equation}
\left|\int_{N_0}\mskip-10mu\left(X^{} \times
  X^{}\right)^*\!\widehat\Gamma_{\hbar}^*\,\chi_\Lambda \,-\,
\int_{N_0}\mskip-5mu\Phi_\hbar\,\right| \,\le\,
\left|q-1\right|\cdot\left|\int_{N_0} \Phi_\hbar\right| \,<\, M \,\delta\,,
\end{equation}
since we already know the integral of $\Phi_\hbar$ to be bounded and
independent of $\Lambda$ by the local computation in Section \ref{Local}.

The inequalities for points outside
$N_0$ are even easier.  

From the explicit expression for $\Phi_\hbar$ in \eqref{Case1},
\begin{equation}\label{BoundPhiH}
\big|\Phi_\hbar\big| \,<\, A \, \Lambda
\exp{\!\Big[-B\,\Lambda\,|\eta|\Big]}\,,\qquad\qquad A,B\,>\,0\,,
\end{equation}
for some positive constants $A$ and $B$.  So in the allowed range
${\Lambda^{-1/2}\,\Rw \le |\eta| \le \Rw}$ on the complement of $N_0$,
\begin{equation}
\begin{aligned}
\big|\Phi_\hbar\big| \,&<\, A \, \Lambda \exp{\!\left[-B \,
    \Lambda^{1/2} \, \Rw \right]}\,,\qquad\qquad \Rw \,=\, m \,
\Lambda^{-1/4}\,,\\
&=\, A \, \Lambda \exp{\!\left[-m\, B \,
    \Lambda^{1/4}\right]}\,.
\end{aligned}
\end{equation}
By taking $\Lambda$ sufficiently large, we can make the magnitude of
$\Phi_\hbar$ as small as desired on 
the complement of $N_0$ inside $N_\Delta(\Rw;\ell)$, from which the
bound in \eqref{None2} follows.  

To establish a similar bound for the pullback of $\chi_\Lambda$ in
\eqref{SecondOrd}, observe that the argument of the exponential obeys
\begin{equation}
\frac{||\dot{\gamma}||^2 \,|\eta|}{2
      \left|1-\hbar\right|\left|\gamma\times\dot{\gamma}\right|}\left[1 \,+
  \left(\frac{\dot{\gamma}\cdot\ddot{\gamma}}{||\dot{\gamma}||^2} -
    \ha
    \frac{\gamma\times\ddot{\gamma}}{\gamma\times\dot{\gamma}}\right)\eta\right]
>\, B\,|\eta|\,,\qquad\qquad B>0\,,
\end{equation}
provided that $\Lambda$ is sufficiently large and $\eta$ sufficiently
small.  Here ${B>0}$ is a suitable positive constant.  Then according
to the expansion in \eqref{SecondOrd},
\begin{equation}
\big|\left(X^{} \times
  X^{}\right)^*\!\widehat\Gamma_{\hbar}^*\,\chi_\Lambda\big|\,<\, A \,\Lambda\,
\exp{\!\Big[-B\,\Lambda\,|\eta|\Big]}\,,\qquad\qquad A,B\,>\,0\,,
\end{equation}
exactly as for the preceding bound on $\Phi_\hbar$ in
\eqref{BoundPhiH}.  The claim in \eqref{None} now follows by an
identical argument.

In total, the three inequalities in \eqref{NNought}, \eqref{None},
and \eqref{None2} finish the proof of the localization formula for
$\slk_\kappa(C)|_\Delta$ in the generic case ${\hbar\neq 1}$.\qquad$\square$

\vspace{2pt}\noindent
\textbf{{\small Symmetric case ${\hbar=1}$}}
\vspace{2pt}

For the Heisenberg-symmetric value ${\hbar=1}$, the localization
formula from Section \ref{Local} states ${\slk_\kappa(C)|_\Delta =
  0}$.  Consistent with this result, we establish the basic bound in
\eqref{DiagBd} by showing individually 
\begin{equation}\label{Sam1}
\left|\int_{N_\Delta(\Rw)} \left(X^{} \times
  X^{}\right)^*\!\widehat\Gamma_{\hbar}^*\,\chi_\Lambda\,\right| \,<\,
\delta\,,
\end{equation}
and
\begin{equation}\label{Sam2}
\left|\int_{N_\Delta(\Rw)} \Phi_\hbar \,\right| \,<\,
\delta\,.
\end{equation}

Our workhorse is the next-order expansion of the self-linking
integrand, which behaves differently for ${\hbar=1}$.  For the
argument of the heat kernel, the calculations in Section \ref{LocalD}
yield 
\begin{equation}\label{Oz1}
\frac{\Delta x^2 + \Delta y^2}{2\,\widehat\Delta z}
\,\buildrel{\hbar=1}\over =\, -\frac{3}{\eta}\cdot
\left[\frac{||\dot{\gamma}||^2
    \,+ \left(\dot{\gamma}\cdot\ddot{\gamma}\right) \eta \,+\,
    \CO\big(\eta^2\big)}{\left(\dot{\gamma}\times\ddot{\gamma}\right) \,+\, \ha
    \left(\dot{\gamma}\times\dddot{\gamma}\right) \eta \,+\, \CO\big(\eta^2\big)}\right].
\end{equation}
Similarly,
\begin{equation}\label{Sam3}
\begin{aligned}
\left(X^{} \times
  X^{}\right)^*\!\widehat\Gamma_{\hbar}^*\,\chi_\Lambda
\,&\buildrel{\hbar=1}\over =\,
-\frac{3 \Lambda\,d\phi\^d\eta}{2\pi\eta^2} \left[1 \,+\,
  \CO\big(\eta^2\big)\right]\times\,\\
&\times\,\exp{\!\left[-\frac{3\,\Lambda}{|\eta|}\cdot\left|\frac{||\dot{\gamma}||^2
    \,+ \left(\dot{\gamma}\cdot\ddot{\gamma}\right) \eta \,+\,
    \CO\big(\eta^2\big)}{\left(\dot{\gamma}\times\ddot{\gamma}\right) \,+\, \ha
    \left(\dot{\gamma}\times\dddot{\gamma}\right) \eta \,+\, \CO\big(\eta^2\big)}\right|\right]}\,.
\end{aligned}
\end{equation}
For sake of brevity, we omit the calculation leading to
\eqref{Sam3}.  The details of this formula are not
important.\footnote{Curiously, the order-$\eta$ correction to the
  prefactor in \eqref{Sam3} vanishes when ${\hbar=1}$.}

For the approximation $\Phi_\hbar$, recall the formula
\begin{equation}\label{SamPhi}
\Phi_\hbar \,\buildrel{\hbar=1}\over =\,-\frac{3 \Lambda}{2\pi
  \eta^2} \exp{\!\left[-\frac{3\,\Lambda\,||\dot{\gamma}||^2}{
      \left|\dot{\gamma}\times\ddot{\gamma}\right|}\frac{1}{|\eta|}\right]}\,d\phi\^d\eta\,.
\end{equation}
Then $\Phi_\hbar$ vanishes smoothly for ${\eta=0}$ and otherwise satisfies
\begin{equation}\label{Oz2}
\big|\Phi_\hbar\big| \,<\, \frac{M \Lambda}{\eta^2} \,
\exp{\!\left[-\frac{A\,\Lambda}{|\eta|}\right]}\,,\qquad\qquad A, M\,>\,0\,,
\end{equation}
for some positive constants $A$ and $M$.  For $\Lambda$ sufficiently
large, $\Phi_\hbar$ can be made as small as desired everywhere on
$N_\Delta(\Rw)$, and the
inequality in \eqref{Sam2} holds.

To treat the pullback of $\chi_\Lambda$ in the same fashion, note that
the denominator in \eqref{Oz1} obeys
\begin{equation}
\left|\left(\dot{\gamma}\times\ddot{\gamma}\right) \eta \,+\, \ha
\left(\dot{\gamma}\times\dddot{\gamma}\right) \eta^2\right| \,<\, A \, |\eta|
\,+\, B \, |\eta|^2 \,<\, 2 \, A \, |\eta|\,,
\end{equation}
provided $|\eta|$ is sufficiently small (with ${B |\eta|<A}$), as holds when $\Lambda$ is
sufficiently large.  Also in this regime, the numerator in \eqref{Oz1}
is bounded from below by 
\begin{equation}
\Big|\,||\dot{\gamma}||^2 \,+\,
  \left(\dot{\gamma}\cdot\ddot{\gamma}\right) \eta\,\Big| \,>\, m
\,>\, 0\,.
\end{equation}
Hence on the tubular neighborhood $N_\Delta(\Rw)$,
\begin{equation}
\big|\!\left(X^{} \times
  X^{}\right)^*\!\widehat\Gamma_{\hbar}^*\,\chi_\Lambda\big| \,<\,
\frac{M \Lambda}{\eta^2} \, \exp{\!\left[-\frac{3\,m\,\Lambda}{2\,A\,|\eta|}\right]}\,.
\end{equation}
This inequality has the same shape as that for $\Phi_\hbar$ in
\eqref{Oz2}, from which we reach the conclusion in \eqref{Sam1}.

The proof of Theorem \ref{MT} is complete.\qquad$\square$

\appendix
\section*{Acknowledgments}

We thank the anonymous referee for many helpful suggestions.  The work
of RZ was partially supported by the Ling Ma Fund of Northeastern University.   The work
of CB is supported in part under National Science Foundation Grant
No.~PHY-1620637.  Any opinions, findings, and conclusions or
recommendations expressed in this material are those of the authors
and do not necessarily reflect the views of the National Science
Foundation.

\bibliographystyle{unsrt}

\end{document}